\documentclass[12pt,leqno]{amsart}

\usepackage{amsmath,stmaryrd}
\usepackage{amssymb}
\usepackage{amsthm}
\usepackage[english]{babel}
\usepackage{epsf}
\usepackage{graphicx}
\usepackage{esint} 
\usepackage{dsfont}
\usepackage[pagebackref]{hyperref} 
\hypersetup{pdfpagemode=FullScreen,  colorlinks=true}  
\usepackage{enumitem} 
\usepackage{soul,xcolor}

\setstcolor{red}

\setlength{\textheight}{8.5truein}
\setlength{\textwidth}{6.5truein}
\voffset-1in
\hoffset-.6in

\theoremstyle{plain}
\newtheorem{Th}{Theorem}[section]
\newtheorem{Lemma}[Th]{Lemma}
\newtheorem{Cor}[Th]{Corollary}
\newtheorem{Prop}[Th]{Proposition}

 \theoremstyle{remark}
\newtheorem{Def}[Th]{Definition}

\newtheorem{Rem}[Th]{Remark}

\numberwithin{equation}{section}

\DeclareMathOperator{\Tr}{Tr}
\DeclareMathOperator{\dist}{dist}
\DeclareMathOperator{\diver}{div}
\DeclareMathOperator{\supp}{supp}
\DeclareMathOperator{\diam}{diam}
\DeclareMathOperator{\Id}{Id}

\newcommand{\R}{\mathbb R}
\newcommand{\E}{\mathcal E}

\newcommand{\A}{\mathcal A}
\newcommand{\wt}{\widetilde}
\newcommand{\bp}{\noindent {\em Proof. }}
\newcommand{\ep}{\hfill $\square$}
\newcommand{\1}{{\mathds 1}}

\title{Carleson Perturbations for the Regularity Problem}

\author[Dai]{Zanbing Dai}
\address{Zanbing Dai. School of Mathematics, University of Minnesota, Minneapolis, MN 55455, USA}
\email{dai00003@umn.edu}

\author[Feneuil]{Joseph Feneuil}
\address{Joseph Feneuil. Dipartimento di Matematica, Università di Pisa,
Largo Bruno Pontecorvo, 7, I–78349 Pisa, Italy}
\email{joseph.feneuil@dm.unipi.it}

\author[Mayboroda]{Svitlana Mayboroda}
\address{Svitlana Mayboroda. School of Mathematics, University of Minnesota, Minneapolis, MN 55455, USA}
\email{svitlana@math.umn.edu}

\thanks{S. Mayboroda was partly supported by the NSF
RAISE-TAQS grant DMS-1839077 and the Simons foundation grant 563916, SM. J. Feneuil was partially supported by the Simons foundation grant 601941, GD and by the European Research Council via the project ERC-2019-StG 853404 VAREG}

\begin{document}

\maketitle

\begin{abstract}
We prove that the solvability of the regularity problem in $L^q(\partial \Omega)$ is stable under Carleson perturbations. If the perturbation is small, then the solvability is preserved  in the same $L^q$, and if the perturbation is large, the regularity problem is solvable in $L^{r}$ for some other $r\in (1,\infty)$. We extend an earlier result from Kenig and Pipher to very general unbounded domains, possibly with lower dimensional boundaries as in the theory developed by Guy David and the last two authors.
To be precise, we only need the domain to have non-tangential access to its Ahlfors regular boundary, together with a notion of gradient on the boundary.
\end{abstract}

{\bf Keywords:} regularity problem, Dirichlet problem, degenerate elliptic equation, Carleson perturbations.

\medskip

{\bf MSC2020:} 35J25 (primary), 31B25, 35J70, 42B25.

\medskip

\tableofcontents

\section{Introduction}

\subsection{History and motivation}

In the last 40 years, and even more in the last 10 years, there have been impressive developments at the intersection of harmonic analysis, elliptic PDEs, and geometric measure theory.  Their main goal is to understand as much as possible the interaction between geometry of (the boundary of) a domain and bounds on solutions of boundary value problems.

The first important result beyond the complex plane is due to Dahlberg in \cite{dahlberg1977estimates} \cite{dahlbert1979poisson}, and it states that the Dirichlet problem is solvable in $L^2$ whenever the domain is Lipschitz. 
Since then, considerable efforts have been devoted to weakening the conditions on domains $\Omega$ and theirs boundaries, and to replacing the harmonic functions - that are solutions to $-\Delta u = 0$ - by solutions of elliptic operators in the form $L=-\diver A \nabla$. These two directions are not independent from each other, because with the help of  changes of variables, we can make $\partial \Omega$ smoother, and the price to pay is rougher coefficients for the matrix $A$.

As far as the Dirichlet boundary value problem is concerned, mathematicians in the area have a pretty clear picture. When the operator is the Laplacian, the solvability of the Dirichlet problem in $L^p$ for some large $p\in (1,\infty)$ is equivalent to the fact that the boundary of the domain $\partial \Omega$ is uniformly rectifiable of dimension $n-1$ (see \cite{david1991singular},\cite{david1993analysis} for the definition) and the domain has sufficient access to the boundary. A non-exhaustive list of works that helped to arrive to this conclusion includes \cite{david1990lipschitz}, \cite{semmes1989criterion}, \cite{hofmann2014uniform}, \cite{azzam2014new}, and \cite{azzam2020harmonic}. 
One cannot replace the Laplacian by a general uniformly elliptic operator and still preserve the $L^p$-solvability of the Dirichlet problem (see \cite{caffarelli1981completely}, \cite{modica1980construction}). The uniformly elliptic operators $L=-\diver A \nabla$ that preserve the $L^p$ solvability of the Dirichlet problem fall into two classes. The first one is the $t$-independent operators (see for instance \cite{jerison1981dirichlet}, \cite{kenig2000new}, and \cite{hofmann2015square}), and the second one is related to Carleson measures, either via perturbations (e.g. \cite{fabes1984necessary}, \cite{dahlberg1986absolute}, \cite{fefferman1989criterion}, and \cite{fefferman1991theory}) or via the oscillations of $A$ (a.k.a Dahlberg-Kenig-Pipher operators, see \cite{kenig2001dirichlet}, \cite{dindos2007lp}). Many of the results have been extended to complex valued elliptic operators and elliptic systems (\cite{hofmann2015regularity}, \cite{dindovs2019regularity}, \cite{dindovs2020boundary}, and \cite{dindovs2021lp}). For an interested reader, who is new to this area, a nice and detailled discussion on those topics can be found in the introduction of \cite{feneuil2020generalized}.

A natural question to ask is whether those results for the Dirichlet boundary value problem have analogues for other boundary value problems, such as the Neumann problem and the regularity problem. However, those problems appear to be considerably more complicated, 
some results are shown in \cite{jerison1981neumann}, \cite{verchota1984layer}, \cite{kenig1993neumann}, \cite{kenig1995neumann}, \cite{auscher2010weighted}, \cite{auscher2012weighted}, \cite{hofmann2015regularity}, and \cite{dindovs2017boundary}, but they do not go as far as one would expect, for instance they don't go beyond Lipschitz domains. 

In the recent impressive breakthrough \cite{mourgoglou2021regularity}, Mourgoglou and Tolsa have shown the solvability of the regularity problem in some Sobolev spaces for the Laplacian on open bounded domains satisfying the corkscrew condition and with uniformly rectifiable boundaries.  The key point is the use of an alternative to the classical boundary Sobolev space (called the Haj\l asz-Sobolev spaces) to bypass the lack of connectedness of the boundary of the domains. The importance of the Haj\l asz-Sobolev spaces is supported by a counterexample from the authors, that shows that the result is false when one uses the classical Sobolev spaces. Mourgoglou and Tolsa complete their article by giving additional geometric conditions (that we interpret as connectedness on the boundary - like the validity of a Poincar\'e inequality on boundary balls) for which the classical Sobolev spaces and the Haj\l asz-Sobolev spaces are the same, which ultimately give the existence of some non-Lipschitz domains where the regularity problem is solvable for the Laplacian in the classical Sobolev spaces. After the submission of our article, the two new manuscripts \cite{dindos2022regularity} and \cite{mourgoglou2022lp} successfully extended the solvability of the Regularity problem to all the Dahlberg-Kenig-Pipher operators, hence generalizing some results from \cite{dindovs2017boundary} and \cite{mourgoglou2021regularity}.

In our article, we look at the stability of the regularity problem under Carleson perturbations \cite{kenig1995neumann} on a ball, 
and we prove that we can extend it in several directions: first we consider operators which are not necessarily symmetric, second we extend the geometric setting to uniform domains - which are domains with non-tangential access and Ahlfors regular boundaries, using as Mourgoglou and Tolsa the Haj\l asz-Sobolev spaces - and third, we allow
low dimensional boundaries, which were studied for the Dirichlet problem by Guy David, Zihui Zhao, Bruno Poggi, and the two last authors (see \cite{david2017elliptic}, \cite{david2017dahlberg}, \cite{mayboroda2019square}, \cite{david2020elliptic}, \cite{mayboroda2020carleson}, \cite{feneuil2018dirichlet}, \cite{feneuil2020generalized}, \cite{david2020harmonic}, and \cite{feneuil2020absolute}). Combined with another paper under preparation (\cite{dai2021regularity}), we ultimately prove the solvability of the regularity problem on the complement of a Lipschitz graph of lower dimension.

\subsection{Introduction to the setting}

The aim of this subsection is to introduce results from \cite{david2017elliptic} and \cite{david2020elliptic} and to give basic definitions adapted to the setting at hand.

As mentioned in the previous subsection, we understand now that we can characterize the uniformly rectifiable sets $\Gamma \subset \R^n$ of dimension $n-1$ via some bounds on the oscillations of the bounded harmonic functions on $\Omega$ (or the solvability of the Dirichlet problem), 
where $\Gamma = \partial \Omega$ and $\Omega$ has enough access to its boundary. Guy David and the two last authors launched a program to
extend this characterization of the uniform rectifiability to uniformly rectifiable sets of lower dimension $d \leq n-2$. In this case, the domain $\Omega = \R^n \setminus \Gamma$ has plenty of access to its boundary (see Proposition \ref{lowuniform}). However, a bounded harmonic function in $\Omega$ is also a bounded harmonic function in $\R^n$, and thus does not ``see'' the boundary $\Gamma$. For that reason, the authors developed in \cite{david2017elliptic} an elliptic theory that is adapted to low-dimensional boundaries by using some operators whose coefficients are elliptic and bounded with respect to a weight. Let us give a quick presentation of this theory.

Consider a domain $\Omega \subset \R^n$ whose boundary is $d$-dimensional Ahlfors regular, that is, there exists a measure $\sigma$ supported on $\partial \Omega$  and $C_\sigma >$ such that  
\begin{align}\label{DEFSIG}
    C^{-1}_{\sigma}r^d\leq \sigma(\Delta(x,r))\leq C_\sigma r^d \quad \text{ for } x\in \partial \Omega, \, r>0,
\end{align}
where $\Delta(x,r):= B(x,r) \cap \partial \Omega$ is a boundary ball. If \eqref{DEFSIG} holds for some measure $\sigma$, then it works also with $\sigma':= \mathcal H^d_{|\partial \Omega}$, the $d$-dimensional Hausdorff on $\partial \Omega$. 
The incoming results would also be true for bounded domains when we ask \eqref{DEFSIG} only when $r\leq \diam(\Omega)$, but the proof would require splitting cases (even though the two cases are fairly similar) and we do not tackle it here. 

Observe that when $d<n-1$, we necessarily have that $\Omega = \R^n \setminus \partial \Omega$ and the domain $\Omega$ automatically has access to its boundary (see Proposition \ref{lowuniform}). When $d\geq n-1$, we assume that $\Omega$ satisfies the interior corkscrew point condition and the interior Harnack chain condition (see Definitions \ref{defCPC} and \ref{defHCC}), which means that $\Omega$ is 1-sided NTA and hence uniform.

\medskip

Consider a class of operators $\mathcal L=- \diver A \nabla$ on $\Omega$, where the coefficients are elliptic and bounded with respect to the weight $w(X):= \dist(X,\partial \Omega)^{d+1-n}$. To be more precise, we assume that there exists $\lambda>0$ such that
\begin{align}\label{ELLIP2}
\lambda |\xi|^2 w(X) \leq A(X)\xi\cdot \xi\ \ \text{and}\ \
|A(X)\xi\cdot \zeta|\leq \lambda^{-1} w(X) |\xi||\zeta|,\ \ \xi, \zeta \in \R^n\, X\in \Omega.
\end{align}
If we write $\mathcal A$ for the rescaled matrix $w^{-1} A$, then the operators that we consider are in the form $\mathcal L := -\diver [w\mathcal A \nabla]$ where $\mathcal A$ satisfies the classical elliptic condition
\begin{align}\label{ELLIP}
\lambda|\xi|^2\leq \mathcal{A}(X)\xi\cdot \xi\ \ \text{and}\ \
|\mathcal{A}(X)\xi\cdot \zeta|\leq \lambda^{-1} |\xi||\zeta|,\ \ \xi, \zeta \in \R^n\,  X\in \Omega.
\end{align}
A weak solution to $\mathcal L u = 0$ lies in $W^{1,2}_{loc}(\Omega)$ and satisfies 
\begin{align*}
    \int_\Omega \mathcal{A}\nabla u\cdot \nabla \varphi \, dm=0 \qquad \text{ for } \varphi \in C^\infty_0(\Omega),
\end{align*}
where $dm(Y) = w(Y) dY$.

The weak solutions to $\mathcal L u = 0$ satisfy De Giorgi-Nash-Moser estimates (interior and at the boundary). We can also construct a Green function for $\mathcal L$, an elliptic measure on $\partial \Omega$, and derive the comparison principle, a.k.a. CFMS estimates. The full elliptic theory is presented in Subsections \ref{SSelliptic} to \ref{SScomparison}.

There are two fairly standard ways to construct weak solutions. The first one is using the Lax-Milgram theorem in an appropriate weighted Sobolev space (see Lemma \ref{THLMG}). The second one, that will be the one used in the present article, is via the elliptic measure, which is a collection of probability measures $\{\omega^X\}_{X\in \Omega}$ such that, for every compactly supported continuous function $f$ on $\partial \Omega$, the function defined as
\begin{equation} \label{defhm2}
u_f(X) := \int_{\partial \Omega} f(x) \, d\omega^X(x)
\end{equation}
belongs to $C^0(\overline{\Omega})$, and is a weak solution to $\mathcal L u = 0$, and satisfies $u\equiv f$ on $\partial \Omega$. Note that \eqref{defhm2} will be used to provide a formal solution to 
\[\left\{\begin{array}{l} \mathcal L u = 0 \text{ in } \Omega \\
u = f \text{ on } \partial \Omega.
 \end{array}\right. \]

We are ready to introduce the Dirichlet boundary value problem.

\begin{Def}[Dirichlet problem]\label{def:drp}
The Dirichlet problem is solvable in $L^p$ if there exists $C>0$ such that for every $f\in C_c(\partial \Omega)$, the solution $u_f$ constructed by \eqref{defhm2} verifies
\begin{equation} \label{defDirichlet}
\|N(u_f)\|_{L^p(\partial \Omega,\sigma)} \leq C \|f\|_{L^p(\partial \Omega,\sigma)},
\end{equation}
where $N$ is the non-tangential maximal function defined as
\begin{equation} \label{DEFN}
N(v)(x):= \sup_{\gamma(x)} |v|
\end{equation}
and $\gamma(x) := \{X\in \Omega,\, |X-x| \leq 2\delta(X)$\} is a cone with vertex at $x\in \partial \Omega$. 
\end{Def}

In Definition \ref{def:drp}, the data $f$ lies in $C_c(\partial \Omega)$ instead of $L^p(\partial \Omega,\sigma)$,  so that we have a way to construct $u_f$ {\em a priori} using the harmonic measure. Once we know that \eqref{defDirichlet} holds for any $f\in C_c(\partial \Omega)$, we can construct {\em a posteriori} the solutions $u_f$ for any $f\in L^p(\partial \Omega,\sigma)$ by density, and those solutions will satisfy \eqref{defhm2} and \eqref{defDirichlet}.

\subsection{Main results}
We recall that we use $\delta(X)$ for $\dist(X,\partial \Omega)$, $w(X)$ for $\delta(X)^{d+1-n}$, $dm(X)$ for $\delta(X)^{d+1-n} dX$, and $B_X$ for $B(X,\delta(X)/4)$. In this section, we consider two elliptic operators $\mathcal{L}_0,\mathcal{L}_1$ in the form $\mathcal L_i = -\diver [w\mathcal A_i \nabla]$, where $\A_i$ is real, not necessarily symmetric, and uniformly elliptic \eqref{ELLIP}.

We define the disagreement between $\A_0$ and $\A_1$ as:
\begin{align}\label{DEFDF}
    a(X):=\sup_{Y\in B_{X}} |\mathcal{E}(Y)|,\ \ \mathcal{E}(Y):=\A_0(Y)-\A_1(Y).
\end{align}
Assume that $\delta(X)^{d-n}|a(X)|^2dX$ is a Carleson measure, that is
 \begin{align} \label{DEFCA}
    \int_{B(x,r) \cap \Omega} |a(X)|^2\frac{dX}{\delta(X)^{n-d}}\leq Mr^d.
\end{align}

The stability of the solvability of the Dirichlet problem under Carleson perturbations was established in \cite{fefferman1991theory} (when $\Omega$ is a ball), \cite{cavero2019perturbations} and \cite{cavero2020perturbations} 
(when $\Omega$ is a uniform domain in with $n-1$-dimensional boundary),  \cite{mayboroda2020carleson} (when $\Omega$ is uniform with lower dimensional boundary), and \cite{feneuil2020generalized} 
(1-sided NTA domains and enough basic bounds on the harmonic measure, a setting that includes all the previous ones and more).
These results are as follows: 

\begin{Th}[\cite{fefferman1991theory,cavero2020perturbations,mayboroda2020carleson,feneuil2020generalized}] \label{ThDirichlet}
Let $\Omega$ be a uniform domain and let $\mathcal L_0,\mathcal L_1$ be two elliptic operators whose coefficients are real, non necessarily symmetric, and uniformly elliptic in the sense \eqref{ELLIP}. Assume that the Dirichlet problem for the operator $\mathcal L_0$ is solvable in $L^{p_0}$ (see Definition \ref{def:drp}).
If the disagreement \eqref{DEFDF} satisfies the Carleson measure condition \eqref{DEFCA}, then there exists $p_1\in (1,\infty)$ such that the Dirichlet problem for $\mathcal L_1$ is solvable in $L^{p_1}$.

Moreover, there exists $\epsilon_0>0$ (that depends on $p_0$ and $\mathcal L_0$) such that if the Carleson norm $M$ in \eqref{DEFCA} is smaller then $\epsilon_0$, then the Dirichlet problem for $\mathcal L_1$ is solvable in the same $L^{p_0}$.
\end{Th}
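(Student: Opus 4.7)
\emph{Plan of proof.} The first step is to translate $L^{p_i}$-solvability of the Dirichlet problem into a reverse H\"older statement on the elliptic measure: $\mathcal L_i$ is $L^{p_i}$-solvable if and only if $\omega_i \in RH_{p_i'}(\sigma)$ on surface balls, where $p_i' = p_i/(p_i-1)$. Since $\omega_i \in A_\infty(\sigma)$ is equivalent to $\omega_i \in RH_q(\sigma)$ for some $q \in (1,\infty)$, the large-$M$ conclusion reduces to showing $\omega_1 \in A_\infty(\sigma)$, while the small-$M$ conclusion requires preservation of the exponent $p_0'$ with a constant depending quantitatively on $M$.

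The central tool is the integral identity comparing the two elliptic measures via the Green function. For a bounded Borel set $F \subset \partial\Omega$, write $u_i^F(X) = \omega_i^X(F)$ and let $G_0$ denote the Green function of $\mathcal L_0$. Using $\mathcal L_1 u_1^F = 0$ and the definition of $\mathcal E$ to rewrite $\mathcal L_0(u_0^F - u_1^F) = \diver(w\,\mathcal E\,\nabla u_1^F)$, and then integrating by parts against $G_0(X,\cdot)$, one obtains formally
\[\omega_1^X(F) - \omega_0^X(F) \,=\, \int_\Omega \nabla_Y G_0(X,Y) \cdot \mathcal E(Y)\,\nabla u_1^F(Y)\, dm(Y).\]
The plan is to estimate the right-hand side by combining (i) the averaged gradient bound $|\nabla_Y G_0(X,Y)| \lesssim G_0(X,Y)/\delta(Y)$ from Caccioppoli and Moser; (ii) the CFMS comparison relating $G_0(X,Y)$ to $\omega_0^X(\Delta_Y)/\sigma(\Delta_Y)$ from Subsection \ref{SScomparison}; (iii) Cauchy-Schwarz; and (iv) a Carleson-embedding argument that uses the hypothesis that $|a(Y)|^2 \delta(Y)^{d-n}\,dY$ is Carleson of norm $M$, paired with a square-function estimate for $u_1^F$. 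After a dyadic decomposition of $\Omega$ into Whitney-type regions, this should produce a bound of the form $|\omega_1^X(F) - \omega_0^X(F)| \leq C\, f(M)\, \mathcal{M}_{\omega_0}(\mathbf 1_F)(x)$ with $f(M) \to 0$ as $M \to 0$.

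If $M$ is small, inserting this estimate into the reverse H\"older inequality for $\omega_0$ and absorbing the error yields the same inequality for $\omega_1$ with the exponent $p_0'$ preserved, giving the second statement. For arbitrary $M$, the same bound only gives that on sawtooth regions where the effective Carleson norm of $|a|^2\delta^{d-n}$ is below a fixed threshold, $\omega_1$ and $\omega_0$ are mutually $A_\infty$-comparable; a corona/extrapolation-of-Carleson-measures argument then patches these local comparisons together to yield $\omega_1 \in A_\infty(\omega_0)$, hence $\omega_1 \in A_\infty(\sigma)$, hence $\omega_1 \in RH_{p_1'}(\sigma)$ for some $p_1 \in (1,\infty)$. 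The main obstacle I expect is the rigorous justification of the integral identity and the associated Carleson-embedding / square-function estimates in the weighted, possibly low-dimensional setting; in particular, since the coefficients are not assumed symmetric, several CFMS-type comparisons must be run through the adjoint operator $\mathcal L_0^*$, and the stopping-time construction must be calibrated so that the weight $w = \delta^{d+1-n}$ enters the right-hand sides consistently all the way to $\partial\Omega$. This is precisely the machinery assembled in Subsections \ref{SSelliptic}--\ref{SScomparison} and in the cited prior works.
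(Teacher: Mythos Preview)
The paper does not contain a proof of this theorem: it is stated as a known result imported from \cite{fefferman1991theory,cavero2020perturbations,mayboroda2020carleson,feneuil2020generalized}, and the text moves on immediately after the statement. So there is nothing in the paper to compare your argument against.

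That said, your sketch is a faithful outline of the approach taken in those references. The translation of $L^{p}$-solvability into $\omega\in RH_{p'}(\sigma)$, the perturbation identity
\[
\omega_1^X(F)-\omega_0^X(F)=\int_\Omega \nabla_Y G_0(X,Y)\cdot \mathcal E(Y)\nabla u_1^F(Y)\,dm(Y),
\]
the use of Caccioppoli/Moser/CFMS to replace $\nabla G_0$ by $G_0/\delta$ and then by elliptic-measure densities, and the small-$M$/large-$M$ dichotomy (absorption of the error for small norm; corona/extrapolation-of-Carleson-measures for large norm) are exactly the ingredients in the FKP line of argument as adapted to 1-sided chord-arc and low-dimensional settings in \cite{cavero2020perturbations,mayboroda2020carleson,feneuil2020generalized}. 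Your identification of the two technical pressure points --- rigorous justification of the integral identity in the weighted setting and the handling of non-symmetric coefficients via the adjoint --- is also accurate; these are precisely the places where \cite{cavero2020perturbations} and \cite{mayboroda2020carleson} do the work. As written your proposal is a plan rather than a proof, but it is the right plan, and the cited papers fill in every step you list.
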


We know that if the Dirichlet problem is solvable in $L^p$, then it is solvable in $L^{r}$ for all $r\in (p,\infty)$. In this sense, $p_1$ is {\em a priori} larger than $p_0$. The second part of the theorem says that if the disagreement is small enough, then we can preserve the solvability of the Dirichlet problem in the same $L^{p_0}$ space. 

Our main theorem addresses this type of perturbations for the regularity problem. The only forerunner in this case is \cite{kenig1995neumann}. However, contrary to \cite{kenig1995neumann}, here we treat the domains with rough and/or low dimensional boundaries, and operators whose coefficients are not necessarily symmetric. Let us state exactly what we prove.

\begin{Th}\label{thm:mn1}
Let $\Omega$ be a uniform domain (see Definition \ref{defuniform}) and let $\mathcal L_0,\mathcal L_1$ be two elliptic operators whose coefficients are real, non necessarily symmetric, and uniformly elliptic in the sense \eqref{ELLIP}. Assume that the Dirichlet problem for the adjoint operator $\mathcal L_1^*$ is solvable in $L^{q'}$ (see Definition \ref{def:drp}).

If the disagreement \eqref{DEFDF} satisfies the Carleson measure condition \eqref{DEFCA}, then for any $f\in C_c(\partial \Omega) $, the two solutions $u_{0,f}$ and $u_{1,f}$ to $\mathcal L_0 u_{0,f} = 0$ and $\mathcal L_1 u_{1,f} = 0$ constructed by \eqref{defhm2} verify
\begin{equation} \label{main1a}
\|\wt N(\nabla u_{1,f})\|_{L^q(\partial \Omega,\sigma)} \leq C M \|\wt N(\nabla u_{0,f})\|_{L^q(\partial \Omega,\sigma)},
\end{equation}
where $\wt N$ is the averaged non-tangential maximal function defined as
\begin{equation} \label{defwtN}
\wt N(v)(x):= \sup_{X\in \gamma(x)} \left(\fint_{B_X} |v|^2\, dX\right)^\frac12.
\end{equation}
The two quantities in \eqref{main1a} can be infinite, but the left-hand side has to be finite as soon as the right-hand side is finite. The constant $C>0$ depends only on $n$, the uniform constants of $\Omega$, the ellipticity constant $\lambda$, the parameter $q$, and the constant in \eqref{defDirichlet}. 
\end{Th}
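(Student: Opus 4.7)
The plan is to reduce the problem to estimating the gradient of the difference $v := u_{1,f} - u_{0,f}$, bound the averaged non-tangential maximal function of $\nabla v$ by $L^q$--$L^{q'}$ duality using the adjoint Dirichlet solvability, and close the inequality via a Carleson embedding for the disagreement $\mathcal{E}$.

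First I would observe that because $\mathcal L_0 u_{0,f} = \mathcal L_1 u_{1,f} = 0$ and $v$ vanishes in trace on $\partial \Omega$, the function $v$ satisfies the weak identity
\begin{equation*}
\int_\Omega \A_1 \nabla v \cdot \nabla \psi \, dm \;=\; \int_\Omega \mathcal{E} \nabla u_{0,f} \cdot \nabla \psi \, dm
\end{equation*}
for every $\psi$ in the weighted Sobolev space with zero boundary trace. Subadditivity of $\wt N$ then yields $\wt N(\nabla u_{1,f}) \leq \wt N(\nabla u_{0,f}) + \wt N(\nabla v)$, so it suffices to control $\|\wt N(\nabla v)\|_{L^q(\partial\Omega,\sigma)}$ by the right-hand side of \eqref{main1a}.

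Next, I would proceed by $L^q$--$L^{q'}$ duality: for a non-negative $g \in L^{q'}(\partial\Omega,\sigma)$ of unit norm, one linearizes $\wt N(\nabla v)(x)$ by selecting a near-extremal Whitney ball $B_{X(x)} \subset \gamma(x)$ and a near-extremal unit vector field in $L^2(B_{X(x)})$, thereby rewriting $\int g\,\wt N(\nabla v)\,d\sigma$ as a bilinear form $\int_\Omega \nabla v \cdot F\,dm$ for an explicit vector field $F$ with controlled local $L^2$-norms. Invoking the $L^{q'}$-solvability of the Dirichlet problem for $\mathcal L_1^*$, one produces an adjoint solution $V_g$ with boundary data $g$ satisfying $\|N(V_g)\|_{L^{q'}(\sigma)} \lesssim \|g\|_{L^{q'}(\sigma)}$. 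A sawtooth integration-by-parts argument then converts the pairing $\int_\Omega \nabla v \cdot F\,dm$ into the essentially equivalent pairing $\int_\Omega \A_1 \nabla v \cdot \nabla V_g\,dm$, up to controllable boundary and stopping-time errors, which by the weak identity above equals $\int_\Omega \mathcal{E} \nabla u_{0,f} \cdot \nabla V_g\,dm$.

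Finally, Cauchy--Schwarz together with the Carleson embedding associated to hypothesis \eqref{DEFCA}, localized on sawtooth regions over boundary balls, yields
\begin{equation*}
\left| \int_\Omega \mathcal{E} \nabla u_{0,f} \cdot \nabla V_g\,dm \right| \;\lesssim\; M^{1/2}\, \|\wt N(\nabla u_{0,f})\|_{L^q(\sigma)}\, \|\wt S(V_g)\|_{L^{q'}(\sigma)},
\end{equation*}
where $\wt S$ is an area-function-type operator associated to the weight $w$. The comparison $\|\wt S(V_g)\|_{L^{q'}(\sigma)} \lesssim \|N(V_g)\|_{L^{q'}(\sigma)}$ for adjoint solutions---a consequence of Caccioppoli's inequality together with the $L^{q'}$-Dirichlet solvability hypothesis---closes the estimate. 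The main technical obstacle will be making the linearization of $\wt N$ and the sawtooth integration-by-parts rigorous in domains with rough, possibly lower-dimensional, boundaries; stopping-time decompositions, good-$\lambda$ inequalities, and the David--Feneuil--Mayboroda weighted elliptic theory (CFMS estimates, Green function bounds, and the comparison principle for $\mathcal L_1^*$) will be the essential tools.
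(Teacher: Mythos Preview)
Your overall architecture---reduce to the difference $v=u_{1,f}-u_{0,f}$, linearize $\wt N(\nabla v)$ by duality to a vector field $F$, and eventually land on a Carleson embedding against $\mathcal E\nabla u_{0,f}$---matches the paper. But the central step, where you ``produce an adjoint solution $V_g$ with boundary data $g$'' and claim that $\int_\Omega \nabla v\cdot F\,dm$ becomes $\int_\Omega \A_1\nabla v\cdot\nabla V_g\,dm$, does not work. Since $v=u_{1,f}-u_{0,f}\in W_0$ and $V_g$ is a weak solution to $\mathcal L_1^*V_g=0$, the very definition of weak solution forces $\int_\Omega \A_1\nabla v\cdot\nabla V_g\,dm=\int_\Omega \A_1^T\nabla V_g\cdot\nabla v\,dm=0$. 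No sawtooth cutoff rescues this: the bulk term still vanishes, and the resulting boundary terms on the sawtooth have no reason to reproduce $\int_\Omega \nabla v\cdot F\,dm$.

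What the paper actually does is solve the \emph{inhomogeneous} adjoint problem $\mathcal L_1^*w=\diver \vec h$ with \emph{zero} trace (their $\vec h$ is your $F$). One then has the exact identity $\int_\Omega \nabla v\cdot\vec h\,dZ=\int_\Omega \mathcal E\nabla u_{0,f}\cdot\nabla w\,dm$, and the task becomes bounding $\|S(w)\|_{L^{q'}}$. This is where the $L^{q'}$ Dirichlet solvability for $\mathcal L_1^*$ enters, but not via a homogeneous solution: it enters through the $A_\infty$/$RH_q$ property of the elliptic measure $\omega_{1,*}^\infty$. Concretely, a good-$\lambda$ inequality gives $\|S(w)\|_{q'}\lesssim \|N(w)\|_{q'}+\|\wt N(\delta\nabla w)\|_{q'}+\|\mathcal M_{\omega_{1,*}^\infty}(T(\vec h))\|_{q'}$, where $T(\vec h)$ is a cone-sum functional built from $\vec h$; pointwise Green function estimates then show $N(w)+\wt N(\delta\nabla w)\lesssim \mathcal M_{\omega_{1,*}^\infty}(T(\vec h))$, the $RH_q$ hypothesis bounds $\mathcal M_{\omega_{1,*}^\infty}$ on $L^{q'}(\sigma)$, and finally $\|T(\vec h)\|_{q'}\lesssim 1$ follows from the construction of $\vec h$. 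Your closing appeal to ``$\|\wt S(V_g)\|_{q'}\lesssim\|N(V_g)\|_{q'}$ for adjoint solutions'' is therefore replaced by a substantially different chain of estimates tailored to the inhomogeneous $w$, and this is where most of the work lies.
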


The above theorem assumes the solvability of the Dirichlet problem for $\mathcal L_1^*$. It is not very surprising, because it can be seen as a partial converse of Theorem \ref{ThRq=>Dq'} below. That is, if the Dirichlet problem for $\mathcal L^*_1$ is solvable in $L^{q'}$, and if $\mathcal L_1$ satisfies extra conditions, then the regularity problem for $\mathcal L_1$ is solvable in $L^q$.

We wanted to state the above theorem independently of the notion of regularity problem. We remark that it can also be used for the Neumann problem, although not directly, and we leave this question for a future article.

\medskip

We turn now to the definition of the regularity problem, which is long overdue.

\subsection{The regularity problem}

Informally speaking, the regularity problem in $L^q$ comes down to a bound on $\|\wt N(\nabla u_f)\|_{L^q(\partial \Omega)}$ in terms of the tangential derivatives of $u_f$ on the boundary (i.e. in terms of the derivatives of $f$). If $\partial \Omega = \R^d$ is a plane, then we want to show that 
\begin{equation} \label{defRqA} 
\|\wt N(\nabla u_f)\|_{L^q(\R^d)} \leq C \|\nabla_{\R^d} f\|_{L^q(\R^d)},
\end{equation}
where here $\nabla_{\R^d}$ is the classical gradient in $\R^d$. Similarly, on a more complicated boundary, the regularity problem would correspond to the estimate
\begin{equation} \label{defRqB} 
\|\wt N(\nabla u_f)\|_{L^q(\partial \Omega,\sigma)} \leq C \|\nabla_{\partial \Omega,q} f\|_{L^q(\partial \Omega,\sigma)},
\end{equation}
where $\nabla_{\partial\Omega,q}$ is a notion of (tangential) gradient that may depend on $\partial \Omega$ and $q$. If $\partial \Omega = \Gamma$ is the graph of a Lipschitz function, then we can extend the notion of gradient from $\R^d$ to $\Gamma$, for instance by finding a bi-Lipschitz map $\rho : \, \Gamma \to \R^d$ and define
\[\nabla_\Gamma f(x) := \nabla_{\R^d} [f\circ \rho] \circ \rho^{-1}(x) \qquad \text{ for almost every } x\in \partial \Omega.\]
On the other hand, let us take $\partial \Omega := P_1 \cup P_2$ to be the union of two low dimensional planes that do not intersect (and then $\Omega = \R^n \setminus \partial \Omega)$. We have a well-defined gradient $\nabla_{P_1 \cup P_2}$ on $\partial \Omega$ (because we have a gradient on planes), we also have elliptic operators and solutions thanks to the elliptic theory from \cite{david2017elliptic}. However, since $P_1 \cup P_2$ is not connected, $\nabla_{P_1 \cup P_2} f = 0$ does not necessarily imply that $f$ is constant on $P_1 \cup P_2$, and thus we can never have
\begin{equation} \label{defRqC} 
\|\wt N(\nabla u_f)\|_{L^q(P_1 \cup P_2)} \leq C \|\nabla_{P_1 \cup P_2} f\|_{L^q(P_1 \cup P_2)}.
\end{equation}
Recall that Theorem \ref{thm:mn1} only requires $\Omega$ to be uniform, and so nothing can stop $\Omega$ from being very rough (even purely unrectifiable) and not connected. So if we do not want to lose too much from Theorem \ref{thm:mn1}, we would prefer a notion of gradient that exists for any set, and for any function obtained by restricting the ones from $C^\infty_0(\R^n)$ to $\partial \Omega$.

Fortunately for us, Lipschitz functions exist on every metric space, and a notion of gradient was derived from it. For a Borel function $f: \, \partial \Omega \to \R$, we say that a non-negative Borel function $g$ is {\em a generalized gradient of $f$} if 
\begin{equation}
|f(x) - f(y)| \leq |x-y| (g(x) + g(y)) \qquad \text{ for } \sigma\text{-a.e. } x,y\in \partial \Omega.
\end{equation}
The collection of all generalized gradients is denoted by $D(f)$. Then for any $p\geq 1$, the space $\dot W^{1,p}(\partial \Omega,\sigma)$ is the space of Borel functions that have a generalized gradient in $L^p$, and we equip it with the semi-norm
\begin{equation}
\|f\|_{\dot W^{1,p}(\partial \Omega,\sigma)} := \inf_{g\in D(f)} \|g\|_{L^p(\partial \Omega,\sigma)}.
\end{equation}
Haj\l asz introduced those spaces in \cite{hajlasz1996sobolev}, that is why they are often called Haj\l asz-Sobolev spaces. We refer an interested reader to \cite{heinonen2005lectures} for more information on Sobolev spaces on general metric spaces.

It should not a big surprise to bring up Haj\l asz-Sobolev spaces here, since they have already been used to study boundary values problems in \cite{hofmann2010singular} and recently in \cite{mourgoglou2021regularity}. In the latter article, the authors proved that in bounded domains with $(n-1)$-dimensional uniformly rectifiable boundaries, the solvability in $L^p$ of the Dirichlet problem for the Laplacian is equivalent to the solvability of the regularity problem, defined below with the Haj\l asz-Sobolev space.

Note that any function $f$ which lies in a Haj\l asz-Sobolev space supports a Poincar\'e inequality, that is for any $p\in [1,\infty]$ and for any boundary ball $\Delta = \Delta(x,r)$, we have
\begin{equation} \label{PoincareHS}
\|f - f_\Delta\|_{L^p(\Delta,\sigma)} \leq C_p r \inf_{g\in D(f)} \|g\|_{L^p(\Delta,\sigma)}
\end{equation}
where $f_\Delta = \fint_\Delta f\, d\sigma$. The proof of this fact is immediate. Indeed, if $g\in D(f)$, we have
\begin{multline*}
\fint_\Delta |f-f_\Delta|^p \, d\sigma \leq \fint_\Delta \fint_\Delta |f(x)-f(y)|^p d\sigma(x) \, d\sigma(y) \leq \fint_\Delta \fint_\Delta |x-y|^p (g(x)+g(y))^p d\sigma(x) \, d\sigma(y) \\
\leq C_p \fint_\Delta g(x)^p \fint_\Delta |x-y|^p d\sigma(y) \, d\sigma(x) \leq C_p r^p \fint_\Delta g(x)^p d\sigma(x),
\end{multline*}
where we used the symmetry of the roles of $x$ and $y$ between the first and the second line.

\begin{Def}[Regularity problem]\label{def:rep}
The regularity problem is solvable in $L^p$ if there exists $C>0$ such that for every compactly supported Lipschitz function $f$, the solution $u_f$ constructed by \eqref{defhm2} verifies
\begin{equation}
\|\wt N(\nabla u_f)\|_{L^p(\partial \Omega,\sigma)} \leq C \|f\|_{\dot W^{1,p}(\partial \Omega,\sigma)},
\end{equation}
where $\dot W^{1,p}(\partial \Omega,\sigma)$ is the Haj\l asz Sobolev space defined above, and $\wt N$ is defined in \eqref{defwtN}.
\end{Def}

The regularity problem targets the question whether the oscillations of $u$ can be controlled by the oscillations of its trace, in a similar way that in the Dirichlet problem, $u$ is controlled by its trace. We replace $N$ by $\wt N$ because, contrary to $u_f$ which lies in $L^\infty_{loc}(\Omega)$ thanks to the Moser estimate \eqref{icaeq222}, we can only be certain of the fact that $\nabla u_f$ lies in $L^2_{loc}(\Omega)$.

It would be reassuring to know that the Haj\l asz-Sobolev spaces are the classical Sobolev spaces in the basic settings, which is not obvious at the first glance. We have indeed:
\begin{multline} \label{HS=S}
\text{when $\partial \Omega$ is a plane or the graph of a Lipschitz function,} \\ \text{$\dot W^{1,p}(\partial \Omega,\sigma)$ is the classical homogeneous Sobolev space.}
\end{multline}
The proof of the equivalence is a consequence of Lemma 6.5 in 
\cite{mourgoglou2021regularity}\footnote{The equivalence is established in a much more general situation, involving uniformly rectifiable sets and Poincar\'e inequalities.}.

Note also that the Haj\l asz-Sobolev spaces contain the compactly supported Lipschitz functions, so they cannot be too small. Besides, we have the following duality result between regularity and Dirichlet problems proven in the appendix.

\begin{Th} \label{ThRq=>Dq'}
Let $\Omega$ be a uniform domain, and let $\mathcal L = - \diver [w\A \nabla]$ be an elliptic operator whose coefficients satisfy \eqref{ELLIP}. 

If the regularity problem (defined with the Haj\l asz-Sobolev spaces) for $\mathcal L$ is solvable in $L^q$ for some $q\in (1,\infty)$, then the Dirichlet problem for the adjoint $\mathcal L^*$ is solvable in $L^{q'}$, where $\frac1q + \frac1{q'} = 1$.
\end{Th}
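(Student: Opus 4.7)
The plan is a duality argument via Green's identity, pairing the $\mathcal{L}^*$-Dirichlet solution $v_g$ against $\mathcal{L}$-solutions. By density, I take $g \in C_c(\partial \Omega)$ and let $v := v_g$ solve $\mathcal{L}^* v = 0$ with $v|_{\partial \Omega} = g$; the goal is $\|N(v)\|_{L^{q'}(\partial \Omega, \sigma)} \leq C \|g\|_{L^{q'}}$. Using $L^{q'}$–$L^q$ duality for the non-negative function $N(v)$ together with a measurable selection $X : \partial \Omega \to \Omega$ with $X(x) \in \gamma(x)$ and $|v(X(x))| \geq N(v)(x)/2$, the problem reduces to bounding
\[
\int_\Omega v \, d\mu \;\leq\; C \|g\|_{L^{q'}} \|h\|_{L^q}
\]
for an arbitrary non-negative $h \in L^q(\partial \Omega, \sigma)$, where $\mu := X_{\#}(\varepsilon h \, d\sigma)$ is the push-forward signed Radon measure on $\Omega$ associated to the sign function $\varepsilon(x) = \mathrm{sign}(v(X(x)))$.

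Next, I introduce $U(Y) := \int_\Omega G(Y, Z) \, d\mu(Z)$, the $\mathcal{L}$-Green potential of $\mu$, which lies in $W^{1,2}_0(\Omega, m)$ and solves $\mathcal{L} U = \mu$ weakly. Testing the weak formulation of $\mathcal{L}^* v = 0$ against $U$ gives $\int \mathcal{A} \nabla U \cdot \nabla v \, dm = 0$, and a careful Green's identity---justified by cutoff approximation and with the boundary trace reinterpreted through the $\mathcal{L}^*$-elliptic measure in the weighted/low-dimensional setting---produces
\[
\int_\Omega v \, d\mu \;=\; \int_{\partial \Omega} g \, d\tau, \qquad d\tau(y) := \int_{\partial \Omega} \varepsilon(x) h(x) \, d\omega^{*X(x)}(y) \, d\sigma(x).
\]
An application of Hölder's inequality then reduces the theorem to the kernel estimate $\|d\tau / d\sigma\|_{L^q(\partial \Omega, \sigma)} \leq C \|h\|_{L^q}$.

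To close the loop, I test $d\tau/d\sigma$ against a Lipschitz $\phi$: a parallel Green's identity---using now the $\mathcal{L}^*$-Green potential of $\mu$ together with the $\mathcal{L}$-solution $u_\phi$---yields the companion identity
\[
\int_{\partial \Omega} \phi \, d\tau^{(\mathcal{L})} \;=\; \int_{\partial \Omega} u_\phi(X(x)) \, \varepsilon(x) h(x) \, d\sigma(x),
\]
where $\tau^{(\mathcal{L})}$ is the analogue of $\tau$ with $\omega^{X(x)}$ replacing $\omega^{*X(x)}$. Combining a Moser estimate with the Haj\l asz–Poincaré inequality \eqref{PoincareHS} produces the pointwise bound
\[
|u_\phi(X(x))| \;\lesssim\; M_\sigma \phi(x) + \delta(X(x)) \, \wt N(\nabla u_\phi)(x),
\]
with $M_\sigma$ the Hardy–Littlewood maximal operator on $(\partial \Omega, \sigma)$. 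A Whitney/atomic decomposition of $h$ on which $\delta(X(x))$ is essentially constant, followed by Hölder and the regularity hypothesis $\|\wt N(\nabla u_\phi)\|_{L^q} \leq C \|\phi\|_{\dot W^{1,q}}$, gives $\int \phi \, d\tau^{(\mathcal{L})} \lesssim \|\phi\|_{L^{q'}} \|h\|_{L^q}$. A comparison of $\tau$ and $\tau^{(\mathcal{L})}$ via the Green-function symmetry together with the $A_\infty$-equivalence of $\omega^X$ and $\omega^{*X}$ near $\partial \Omega$ supplied by the CFMS estimates then closes the argument.

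The main obstacle will be twofold: (i) rigorously justifying the two Green's identities in the weighted/low-dimensional-boundary setting, where the classical conormal derivative does not exist on a $d$-dimensional boundary with $d < n-1$ and must be re-interpreted through the elliptic measures $\omega^X$ and $\omega^{*X}$ built in the earlier sections of the paper; and (ii) reconciling the $\dot W^{1,q}$-norm appearing on the regularity side with the $L^{q'}$-norm required on the Dirichlet side, which is the role of the Whitney/atomic decomposition absorbing the $\delta(X(x))$-factor arising from the Poincaré inequality.
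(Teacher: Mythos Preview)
Your approach has a genuine gap that the hand-waving around the ``Whitney/atomic decomposition'' does not close. After the pointwise bound
\[
|u_\phi(X(x))| \;\lesssim\; M_\sigma \phi(x) + \delta(X(x))\,\wt N(\nabla u_\phi)(x),
\]
pairing against $h$ and invoking the regularity hypothesis produces a term controlled by $\|\phi\|_{\dot W^{1,q}}$, not $\|\phi\|_{L^{q'}}$. The factor $\delta(X(x))$ cannot rescue you: the selection $X(x)$ is determined by where $v=v_g$ is large in $\gamma(x)$, so $\delta(X(x))$ depends on $g$, is unbounded in general, and has no relation whatsoever to $h$ or to the scale of $\phi$. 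Decomposing $h$ into Whitney pieces on which $\delta(X(\cdot))$ is ``essentially constant'' therefore does not lead anywhere --- you still have to test against a \emph{single} $\phi$, and the mismatch $\dot W^{1,q}$ versus $L^{q'}$ persists on each piece. This norm mismatch is precisely why the classical route (Kenig--Pipher, reproduced in the paper) does \emph{not} attempt a direct duality argument.

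A second, subsidiary problem is the final comparison of $\tau$ and $\tau^{(\mathcal L)}$. Even granting that $\omega^X(\Delta)\approx\omega^{*X}(\Delta)$ uniformly (which does follow from CFMS together with $G(X,Y)\approx G(Y,X)$, itself a nontrivial fact for non-symmetric $\mathcal A$ that you would need to justify), an $A_\infty$ relation does not transfer $L^q$-bounds on signed combinations of densities of the form $\int \varepsilon(x)h(x)\,d\omega^{X(x)}\,d\sigma(x)$.

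The paper's proof sidesteps both issues by \emph{not} attempting to bound $N(v_g)$ directly. Instead, it uses the equivalence (Corollary~\ref{RVHWI}) between $L^{q'}$-Dirichlet solvability for $\mathcal L^*$ and the reverse-H\"older estimate $d\omega^\infty_*/d\sigma\in RH_q$. For each boundary ball $\Delta=\Delta(x,r)$ one builds a single Lipschitz bump $f$ at scale $r$ vanishing on $2\Delta$, with $\|f\|_{\dot W^{1,q}}\lesssim r^{d/q-1}$. The comparison principle (Theorem~\ref{THCPMLD}) and Lemma~\ref{CPWI01} then give, for corkscrew points $Y$ near $\Delta(y,s)\subset\Delta$, that $\omega_*(\Delta(y,s))/\sigma(\Delta(y,s))\approx \tfrac{\omega_*(\Delta)}{r^{d-1}}\cdot\tfrac{u_f(Y)}{\delta(Y)}$; a lemma (Lemma~\ref{dualm02}) converts $u_f(Y)/\delta(Y)$ to $\wt N_*(\nabla u_f)(z)$. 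Integrating the $q$-th power over $\Delta$ and invoking the regularity hypothesis on this \emph{specific, scale-adapted} $f$ yields the $RH_q$ bound directly --- the norm mismatch never arises because $f$ is built at the correct scale from the start.
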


The combination of Theorems \ref{ThDirichlet}, \ref{thm:mn1}, and \ref{ThRq=>Dq'} gives the following corollary.

\begin{Cor}\label{thm:mn5}
Let $\Omega$ be a uniform domain, and let $\mathcal L_0,\mathcal L_1$ be two elliptic operators whose coefficients are uniformly elliptic in the sense of \eqref{ELLIP}. Assume that  the disagreement \eqref{DEFDF} satisfies the Carleson measure condition \eqref{DEFCA}. Then the following holds.
\begin{enumerate}
\item If there exists $q_0\in (1,\infty)$ such that the regularity problem for the operator $\mathcal L_0$ is solvable in $L^{q}$ for any $q\in (1,q_0]$, then there exists $q_1\in (1,\infty)$ such that the regularity problem for $\mathcal L_1$ is solvable in $L^q$ for any $q\in (1,q_1]$.
\item  If there exists $q_0\in (1,\infty)$ such that the regularity problem for the operator $\mathcal L_0$ is solvable in $L^{q_0}$, and if the Carleson norm $M$ in \eqref{DEFCA} is smaller than  $\epsilon_0$ (depending only on $q_0$ and $\mathcal L_0$), then the regularity problem for $\mathcal L_1$ is solvable in the same $L^{q_0}$.
\end{enumerate}
\end{Cor}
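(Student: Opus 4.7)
The plan is to chain the three preceding results in the natural order: use Theorem \ref{ThRq=>Dq'} to convert the regularity assumption on $\mathcal L_0$ into a Dirichlet statement for $\mathcal L_0^*$, then apply Theorem \ref{ThDirichlet} to propagate Dirichlet solvability to $\mathcal L_1^*$, and finally apply Theorem \ref{thm:mn1} --- whose hypothesis is exactly this Dirichlet statement on $\mathcal L_1^*$ --- to transfer the gradient estimate from $u_{0,f}$ to $u_{1,f}$. The one observation worth recording at the outset is that the matrix associated to $\mathcal L_i^*$ is the transpose $\mathcal A_i^T$, so the disagreement between $\mathcal A_0^T$ and $\mathcal A_1^T$ is the pointwise transpose of $\mathcal A_0-\mathcal A_1$ and therefore has identical pointwise norm and identical Carleson norm; in particular the Carleson hypothesis \eqref{DEFCA} passes unchanged to the adjoint pair.

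For part (1), Theorem \ref{ThRq=>Dq'} turns the assumed solvability of the regularity problem for $\mathcal L_0$ in $L^{q_0}$ into solvability of the Dirichlet problem for $\mathcal L_0^*$ in $L^{q_0'}$. Theorem \ref{ThDirichlet} then produces some $p_1\in(1,\infty)$, which up to enlarging we may take with $p_1\geq q_0'$, such that Dirichlet for $\mathcal L_1^*$ is solvable in every $L^r$ with $r\geq p_1$. Set $q_1:=p_1'\leq q_0$. For each $q\in(1,q_1]$ one has $q'\geq p_1$, so Theorem \ref{thm:mn1} applies and yields
\[
\|\wt N(\nabla u_{1,f})\|_{L^q(\partial\Omega,\sigma)}\leq CM\,\|\wt N(\nabla u_{0,f})\|_{L^q(\partial\Omega,\sigma)}.
\]
Because $q\leq q_1\leq q_0$, the hypothesized regularity of $\mathcal L_0$ in $L^q$ bounds the right-hand side by $\|f\|_{\dot W^{1,q}(\partial\Omega,\sigma)}$, and density of compactly supported Lipschitz functions gives solvability of the regularity problem for $\mathcal L_1$ in $L^q$.

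Part (2) follows from the same chain run at the single exponent $q_0$, except that in the middle step one invokes the small-perturbation clause of Theorem \ref{ThDirichlet}: if $M$ lies below the threshold $\epsilon_0$ associated to the pair $(q_0',\mathcal L_0^*)$ --- a threshold depending only on $q_0$ and $\mathcal L_0$ --- then Dirichlet for $\mathcal L_1^*$ remains solvable in the \emph{same} $L^{q_0'}$. Applying Theorem \ref{thm:mn1} with $q=q_0$ and combining with the assumed regularity of $\mathcal L_0$ in $L^{q_0}$ closes the argument. There is no genuine analytic obstacle in this corollary --- it is essentially a bookkeeping synthesis --- so the only discipline required is tracking exponents (primes versus non-primes) and keeping the three theorems applied in the correct adjoint/non-adjoint slots.
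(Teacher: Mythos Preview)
Your argument is correct and follows the same route as the paper's own proof: apply Theorem~\ref{ThRq=>Dq'}, then Theorem~\ref{ThDirichlet} on the adjoint pair, then Theorem~\ref{thm:mn1}, and close with the assumed regularity of $\mathcal L_0$. Your bookkeeping of the exponents is in fact slightly cleaner than the paper's (which contains a harmless misprint, writing $q_0\le q_1$ where $q_1\le q_0$ is meant), and your explicit remark that the Carleson norm is invariant under transposition, so that Theorem~\ref{ThDirichlet} applies to $(\mathcal L_0^*,\mathcal L_1^*)$ with the same constant $M$, is a useful clarification the paper leaves implicit.
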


\begin{Rem}
When the boundary is smooth (flat or Lipschitz), the Haj\l asz-Sobolev spaces and the regular Sobolev spaces are the same \eqref{HS=S}, and the solvability of the regularity problem for an operator $\mathcal L$ in $L^{q}$ immediately implies the solvability of the regularity problem in $L^{p}$, $p\in (1,q]$. The proof of this result is the same as the one of Theorem 5.2 in \cite{kenig1993neumann} (see also \cite{dindos2011regularity}), which treats the case of bounded domains with smooth boundary. 

However, the proof cannot be directly adapted to our context, because the proof relies on the properties of Hardy-Sobolev spaces on the boundary, which are not constructed yet in our setting that uses the generalized gradient.
\end{Rem}

{\noindent \em Proof of the corollary.} 
Let $f\in C_c(\R^n)$ and let $u_{0,f}$ and $u_{1,f}$ be the two solutions to $\mathcal L_0 u_{0,f} = 0$ and $\mathcal L_1 u_{1,f} = 0$ constructed by \eqref{defhm2}. Theorem \ref{ThRq=>Dq'} shows that the Dirichlet problem for $\mathcal L_0^*$ is solvable in $L^{q_0'}$, and then Theorem \ref{ThDirichlet} implies that the Dirichlet problem for $\mathcal L_1^*$ is solvable in $L^{q'_1}$ for some $q'_1\in [q'_0,\infty)$. Theorem \ref{thm:mn1} further provides the estimate
\begin{equation} \label{corzz1}
\|\wt N(\nabla u_{1,f})\|_{L^{q_1}(\partial \Omega,\sigma)} \leq C \|\wt N(\nabla u_{0,f})\|_{L^{q_1}(\partial \Omega,\sigma)},
\end{equation}
where $\frac1{q_1} + \frac1{q'_1} = 1$.

Since $q'_1 \geq q'_0$, we have $q_0 \leq q_1$ and hence the regularity problem for $\mathcal L_0$ is solvable in $L^{q_1}$, that is
\begin{equation} \label{corzz2}
\|\wt N(\nabla u_{0,f})\|_{L^{q_1}(\partial \Omega,\sigma)} \leq C \|\nabla f\|_{L^{q_1}(\partial \Omega,\sigma)}
\end{equation}
We combine \eqref{corzz1} and \eqref{corzz2} to get that $\|\wt N(\nabla u_{1,f})\|_{q_1} \leq C \|\nabla f\|_{q_1}$, which concludes the first part of the corollary.

When $M$ is small, Theorem \ref{ThDirichlet} says that we can take $q'_1 = q'_0$ - hence $q_1 = q_0$ - in the above reasoning. The second part of the corollary follows.
\ep

\subsection{Conditions on the operator implying the solvability in $L^p$ of the regularity problem}

\subsubsection{Combination with \cite{dai2021regularity}}

In another article \cite{dai2021regularity}, we use the perturbation result from the present article to show that the regularity problem is solvable in $L^2$ for a certain class of elliptic operators on $\R^n \setminus \R^d$, $d<n-1$. 

\begin{Th}[Theorem 1.1 in \cite{dai2021regularity}] \label{ThReg}
Let $d<n-1$ and $\Omega = \R^n \setminus \R^d := \{(x,t) \in \R^d \times \R^{n-d}, \, t \neq 0\}$. Assume that the operator $\mathcal L = -\diver [ |t|^{d+1-n} \mathcal A \nabla]$ satisfies \eqref{ELLIP} and is such that the matrix $\mathcal A$ can be written
\begin{equation} \label{A=B}
\mathcal A = \mathcal B + \mathcal C, \quad \text{ with } \quad   \mathcal B = \begin{pmatrix} \mathcal B_{||} & 0 \\ 0 & b \Id_{n-d} \end{pmatrix},
\end{equation}
and 
 \begin{align} \label{defca2}
    \int_{B(x,r)} \int_{|t|<r} \Big(|t||\nabla \mathcal B| + |\mathcal C|\Big)^2 \, \frac{dt\, dx}{|t|^{n-d}} \leq M r^d \qquad \text{ for } x\in \R^d, \, r>0.
\end{align}

There exists $\epsilon_0>0$ depending only on $n$, $d$, and the ellipticity constant $\lambda$ such that if the constant $M$ in \eqref{defca2} is smaller than $\epsilon_0$, then the regularity problem for $\mathcal L$ is solvable in $L^2$. 
\end{Th}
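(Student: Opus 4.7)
The strategy is to reduce Theorem~\ref{ThReg} to a Rellich-type identity for a block-diagonal base operator, using the perturbation machinery of the present article to strip away everything that is not of block form.

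\textbf{Step 1 (Perturbative reduction).} Write $\mathcal{A}=\mathcal{B}+\mathcal{C}$ and set $\mathcal{L}_B:=-\diver[|t|^{d+1-n}\mathcal{B}\nabla]$. The disagreement between $\mathcal{A}$ and $\mathcal{B}$ is exactly $\mathcal{C}$, whose squared Carleson norm in the sense of \eqref{DEFCA} is bounded by $M$ thanks to hypothesis \eqref{defca2}. Hence, provided $M$ is smaller than the threshold supplied by Corollary~\ref{thm:mn5}(2) applied with reference operator $\mathcal{L}_B$ and target operator $\mathcal{L}$, the $L^2$-regularity of $\mathcal{L}_B$ implies the $L^2$-regularity of $\mathcal{L}$. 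We may therefore assume $\mathcal{C}\equiv 0$, so that $\mathcal{A}=\mathcal{B}$ is block-diagonal and $|t||\nabla\mathcal{B}|$ satisfies the Carleson condition \eqref{defca2} with the same small norm.

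\textbf{Step 2 (Rellich identity for $\mathcal{L}_B$).} For a Lipschitz $f\in C_c(\R^n)$, let $u=u_f$ solve $\mathcal{L}_B u=0$ with data $f$. Take the transverse vector field $\vec\alpha(x,t)=(0,t)\in\R^d\times\R^{n-d}$, multiply the equation $\diver[|t|^{d+1-n}\mathcal{B}\nabla u]=0$ by $\vec\alpha\cdot\nabla u$, and integrate by parts on the truncated region $\{|t|>\varepsilon\}$ with respect to $|t|^{d+1-n}\,dX$. In the limit $\varepsilon\to 0^+$, the boundary contribution on $\{|t|=\varepsilon\}$ produces a positive multiple of $\int_{\R^d}|\nabla_{\R^d} f|^2\,dx$ (here the block-diagonal structure of $\mathcal{B}$ and its scalar transverse part $b\Id_{n-d}$ are essential, since they cancel the tangential/transverse cross terms), the principal interior contribution is equivalent to $\int_\Omega |\nabla u|^2|t|^{d+1-n}\,dX$ with a favorable sign, and the remaining error is pointwise dominated by $|t||\nabla\mathcal{B}|\cdot|\nabla u|^2\cdot|t|^{d+1-n}$. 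Carleson's inequality, together with the smallness of the Carleson norm of $|t||\nabla\mathcal{B}|$, bounds these errors by $\sqrt M\,\|\wt N(\nabla u)\|_{L^2(\R^d)}^2$, which can be absorbed once Step~3 is in place.

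\textbf{Step 3 (From integral bound to $\wt N$).} To upgrade the weighted energy estimate $\int_\Omega|\nabla u|^2|t|^{d+1-n}\,dX\lesssim \|f\|_{\dot W^{1,2}(\R^d)}^2$ into a bound on $\|\wt N(\nabla u)\|_{L^2(\R^d)}$, I would pass through the conical square function $S(\nabla u)$ and use the standard $\|\wt N(\nabla u)\|_{L^2}\lesssim \|S(\nabla u)\|_{L^2}$ (a good-$\lambda$ comparison) combined with interior Caccioppoli estimates. The required bound on $S(\nabla u)$ can be obtained via a second-order Rellich argument after differentiating the equation, or equivalently from the $L^2$-solvability of the Dirichlet problem for the adjoint $\mathcal{L}_B^*$, which is explicit for the model case $\mathcal{B}=\Id$ (the Poisson kernel is essentially the $d$-dimensional Riesz kernel) and is preserved under small Carleson perturbations by Theorem~\ref{ThDirichlet}.

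\textbf{Main obstacle.} The hardest part is engineering the Rellich identity in Step~2 so that the boundary contribution at $|t|=\varepsilon$ converges, as $\varepsilon\to 0$, precisely to the tangential Dirichlet energy of $f$, despite the degeneracy of the weight $|t|^{d+1-n}$ at the $d$-dimensional boundary. The block-diagonal form of $\mathcal{B}$ is indispensable: any tangential/transverse coupling (of the kind $\mathcal{C}$ would reintroduce) generates limiting boundary terms involving $\partial_t u$ that, on the $d$-dimensional set $\R^d$, admit no expression in terms of $\nabla_{\R^d} f$ alone. This is exactly why Step~1 must precede Step~2, and why the Carleson perturbation theorem of this article is the natural tool for unlocking the direct Rellich argument.
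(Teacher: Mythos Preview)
This theorem is not proved in the present paper; it is quoted as Theorem~1.1 of the companion article \cite{dai2021regularity}, so there is no in-paper proof to compare against directly. Your Step~1 is correct and matches exactly the role the present paper plays: Corollary~\ref{thm:mn5}(2) strips away $\mathcal{C}$ by a small Carleson perturbation, reducing to the block operator $\mathcal{L}_B$.

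Steps~2 and~3, however, contain a genuine gap. The Rellich identity for a solution of $\mathcal{L}_B u=0$ does \emph{not} produce $\int_\Omega|\nabla u|^2\,dm$ as its principal interior term; for solutions of a divergence-form equation the bulk terms in the Rellich computation cancel (that is the whole point of the identity), leaving a boundary relation between transverse and tangential derivatives---essentially $\|\partial_\nu u\|_{L^2(\partial\Omega)}\approx\|\nabla_T f\|_{L^2(\partial\Omega)}$ modulo Carleson errors---not an interior energy bound. Even if one had such an energy bound, it would not yield control of $\|\wt N(\nabla u)\|_{L^2}$: the global weighted $L^2$ norm of $\nabla u$ over $\Omega$ is strictly weaker than the conical-sup quantity $\wt N(\nabla u)$, and the two are not comparable. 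Your proposed bridge in Step~3, $\|\wt N(\nabla u)\|_{L^2}\lesssim\|S(\nabla u)\|_{L^2}$, is not a standard $N\lesssim S$ inequality because $\nabla u$ is not a solution; the good-$\lambda$ argument does not apply, and ``differentiating the equation'' is not available at this coefficient regularity ($|t||\nabla\mathcal B|$ is merely a Carleson measure, so $\mathcal B$ need not be $C^1$). Appealing instead to Dirichlet solvability for $\mathcal{L}_B^*$ begs the question: Theorem~\ref{ThRq=>Dq'} gives the implication in the opposite direction, and its converse is precisely what Theorem~\ref{ThReg} asserts. The actual argument (following the template of \cite{dindovs2017boundary}, as the paper indicates just after stating Theorem~\ref{ThReg}) requires a more intricate system of local $S/N$ comparisons for $u$ itself, coupled with the boundary Rellich relation, rather than a single global energy estimate followed by an upgrade.
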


Thanks to \eqref{HS=S}, the solvability of the regularity problem above means that for any compactly supported Lipschitz function $f$ on $\partial \Omega$, the solution $u_f$ constructed by \eqref{defhm2} verifies 
\[\|\wt N(\nabla u_f)\|_{L^2(\R^d)} \leq C \|\nabla_t f\|_{L^2(\R^d)},\]
where the constant $C>0$ depends only on $d$, $n$, and $\lambda$, and where  $\nabla_t$ is the (tangential) gradient on $\R^d$.

If the matrix $\mathcal C$ is not included, Theorem \ref{ThReg} can be seen as the higher co-dimensional analogue of Theorem 5.1 in \cite{dindovs2017boundary}. The perturbation theory that we developed here allows us to add such term $\mathcal C$ to the coefficients of the operator.

\medskip

The objective of the project that includes both the present article and \cite{dai2021regularity} is to prove the solvability of the regularity problem when $\Omega$ and $\mathcal L$ are like those in \cite{david2017dahlberg}, that is when $\Omega$ is the complement of a Lipschitz graph of low dimension.

In domains with codimension 1 boundary, such results are classically obtained by using a change of variables that turns the Lipschitz domain into $\R^{n}_+$ (if the domain is unbounded) or a ball (if the domain is bounded). Such gain in regularity on the boundary is paid for by less regularity on the coefficients of the elliptic operators $\mathcal L$. In the case of Lipschitz domains with codimension 1 boundary, the change of variable used to flatten Lipschitz boundaries turns smooth operators like the Laplacian to operators in the form $-\diver \mathcal B \nabla$ with $\mathcal B$ like in \eqref{defca2}, see \cite{kenig2001dirichlet}. That is, the perturbation theory is not needed in this case.

However, the change of variable from \cite{kenig2001dirichlet} is not suitable to flatten Lipschitz graphs of low dimension, and another change of variable is needed, like the one in \cite{david2017dahlberg}. This second change of variable is almost isometric in the non-tangential direction, and the conjugate elliptic operator will have coefficients in the form \eqref{A=B} and \eqref{defca2}. Thus we can deduce from Theorem \ref{ThReg} the solvability of the regularity problem on the complement of a small Lipschitz graph of low dimension.

\begin{Cor} 
Let $d<n-1$. Let $\Gamma$ be the graph of a Lipschitz function $\varphi$. Consider the domain $\Omega := \R^n \setminus \Gamma$ and  the operator $L_\alpha = -\diver D_\alpha^{d+1-n} \nabla$, where $D_\alpha$ is the regularized distance
\[D_\alpha(X) := \left( \int_\Gamma |X-y|^{-d-\alpha} \, d\mathcal H^d(y) \right)^{-\frac1\alpha},\]
$\mathcal H^d$ is the $d$-dimensional Hausdorff measure, and $\alpha >0$.
 
There exists $\epsilon_0>0$ that depends only on $\alpha$ and $n$ such that if the Lipschitz constant $\|\nabla \varphi\|_\infty$ is smaller than $\epsilon_0$, then the regularity problem for $\mathcal L$ is solvable in $L^2$, which means that, for any compactly supported Lipschitz function $f$ on $\partial \Omega$, the solution $u_f$ constructed by \eqref{defhm2} verifies 
\[\|\wt N(\nabla u_f)\|_{L^2(\Gamma)} \leq C \|\nabla_t f\|_{L^2(\Gamma)},\]
where the constant $C>0$ depends only on $\alpha$ and $n$, and where  $\nabla_t$ is the (tangential) gradient on $\Gamma$.
\end{Cor}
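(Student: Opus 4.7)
The plan is to reduce the statement to Theorem~\ref{ThReg} via the change of variables $\rho:\R^n\setminus\R^d\to\R^n\setminus\Gamma$ of \cite{david2017dahlberg}. This map is bi-Lipschitz with constant close to $1$ for small $\|\nabla\varphi\|_\infty$, is almost isometric in the transverse direction so that $|t|\approx\dist(\rho(x,t),\Gamma)$, and satisfies $D_\alpha\circ\rho\approx|t|$ with proportionality $1+O(\|\nabla\varphi\|_\infty)$.

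First I would transport $L_\alpha$ through $\rho$. A direct computation shows that if $u$ solves $L_\alpha u=0$ weakly on $\R^n\setminus\Gamma$, then $v:=u\circ\rho$ solves $\wt L v=0$ on $\R^n\setminus\R^d$, where $\wt L=-\diver[|t|^{d+1-n}\wt{\mathcal A}\nabla]$ and $\wt{\mathcal A}$ is obtained by conjugating $D_\alpha^{d+1-n}\Id$ by the Jacobian of $\rho$ and then dividing by $|t|^{d+1-n}$. Since $\rho$ has bi-Lipschitz constants bounded independently of the small parameter $\|\nabla\varphi\|_\infty$, ellipticity in the sense of \eqref{ELLIP} is preserved.

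The key step will be to decompose $\wt{\mathcal A}=\mathcal B+\mathcal C$ with $\mathcal B$ of the block-diagonal form \eqref{A=B} and with $|t||\nabla\mathcal B|+|\mathcal C|$ satisfying the Carleson condition \eqref{defca2} with constant $M\le C\|\nabla\varphi\|_\infty^2$. I would take $\mathcal B$ to be the part of $\wt{\mathcal A}$ obtained by averaging in $x$ and freezing at height $|t|$; because $\rho$ is built from a smoothing of $\varphi$ at scale $|t|$, $\mathcal B$ will differ from $\mathrm{diag}(\mathcal B_\parallel,\,b\,\Id_{n-d})$ only by terms linear in the mollified $\nabla\varphi$, while $\mathcal C$ collects the off-diagonal and higher-oscillation pieces. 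The Carleson bound then reduces to a standard square-function estimate for the oscillation of $\nabla\varphi$ at scale $|t|$. This verification is the main obstacle: one has to combine the explicit formula for $D_\alpha$ with the derivative of $\rho$ and track the cancellations that produce the block-diagonal leading term.

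Once the hypotheses of Theorem~\ref{ThReg} are in place, I would pick $\|\nabla\varphi\|_\infty$ small enough so that $M\le\epsilon_0$, apply Theorem~\ref{ThReg} to $\wt L$, and obtain
\[\|\wt N(\nabla v_g)\|_{L^2(\R^d)}\le C\|\nabla_t g\|_{L^2(\R^d)}\]
for every compactly supported Lipschitz $g$ on $\R^d$. Setting $g:=f\circ\rho|_{\R^d}$ and pulling the inequality back through $\rho$ gives the desired estimate on $\Gamma$: bi-Lipschitz maps take non-tangential cones to non-tangential cones, making $\wt N$ comparable on the two boundaries; the surface measures $\mathcal H^d_{|\Gamma}$ and $\mathcal H^d_{|\R^d}$ are equivalent up to bounded densities; and by \eqref{HS=S} the Haj\l asz-Sobolev norm on $\Gamma$ agrees with the classical one, matching $\|\nabla_t g\|_{L^2(\R^d)}$ up to constants depending only on $\alpha$ and $n$.
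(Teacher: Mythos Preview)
Your proposal is correct and follows essentially the same route as the paper: transport $L_\alpha$ to $\R^n\setminus\R^d$ via the bi-Lipschitz change of variable $\rho$ from \cite{david2017dahlberg}, verify that the conjugated operator has the block-plus-Carleson structure \eqref{A=B}--\eqref{defca2} with Carleson norm controlled by $\|\nabla\varphi\|_\infty$, and then invoke Theorem~\ref{ThReg}. One small discrepancy: the paper records the Carleson constant as $C_\alpha\|\nabla\varphi\|_\infty$ (linear), not $C\|\nabla\varphi\|_\infty^2$ as you wrote; this is immaterial for the conclusion since you only need smallness, but you should not overclaim the quadratic gain unless you have checked it.
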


\noindent {\em Proof of the corollary.} We use the bi-Lipschitz change of variable $\rho$ constructed in \cite{david2017dahlberg}. So the sovability of the regularity problem for $L_\alpha$ (defined on $\R^n \setminus \Gamma$) in $L^q$ is equivalent to the solvability regularity problem in $L^q$ for an operator $\mathcal L_\rho = - \diver [|t|^{d+1-n} \mathcal A_\rho \nabla]$ (defined on $\R^n \setminus \R^d$) where $\A_\rho$ satisfies
\[\mathcal A_\rho = \begin{pmatrix} \mathcal B_{||} & 0 \\ 0 & b \Id_{n-d} \end{pmatrix} + \mathcal C = \mathcal B + \mathcal C\]
and
 \begin{align} \label{defca3}
    \int_{B(x,r)} \int_{|t|<r} \Big(|t||\nabla \mathcal B|+ |\mathcal C| \Big)^2 \, \frac{dt\, dx}{|t|^{n-d}} \leq C_\alpha \|\nabla \varphi\|_\infty  r^d \qquad \text{ for } x\in \R^d, \, r>0.
\end{align}
The corollary follows now from Theorem \ref{ThReg}.
\ep

\subsubsection{Combination with \cite{mourgoglou2021regularity}} 
In uniform domains, Corollary 1.6 and part (a) of Corollary 1.7 in \cite{mourgoglou2021regularity}\footnote{The results in \cite{mourgoglou2021regularity} are stronger because they do not require the existence of Harnack chains inside the domain, like we do.}
gives:

\begin{Th}[\cite{mourgoglou2021regularity}]
If $\Omega$ be a bounded uniform domain with a uniformly rectifiable boundary, then the regularity problem (defined with the Haj\l asz-Sobolev spaces) for the Laplacian is solvable in $L^q$ for any $q\in (1,q_0]$ where $q_0>1$ sufficiently small.
\end{Th}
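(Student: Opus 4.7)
The plan is to combine deep results on harmonic measure of uniformly rectifiable boundaries with a Haj\l asz-atomic dualization of the regularity problem. First, the breakthrough results (Hofmann-Martell, Azzam-Hofmann-Martell-Mayboroda-Mourgoglou-Tolsa-Volberg; cited in the Introduction) assert that, on a bounded uniform domain $\Omega$ with $(n-1)$-dimensional uniformly rectifiable boundary, harmonic measure belongs to $A_\infty(\sigma)$ with $\sigma=\mathcal H^{n-1}|_{\partial\Omega}$. Equivalently, the Dirichlet problem for the Laplacian is solvable in $L^{p_0}(\partial\Omega,\sigma)$ for some $p_0\in(1,\infty)$; since $\Delta$ is self-adjoint, the same holds for the adjoint. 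Setting $q_0:=p_0'$, our task is to produce the converse of Theorem \ref{ThRq=>Dq'} in this specific setting, at least at $q=q_0$ (and thence for all $q\in(1,q_0]$, by re-running the argument with smaller exponents).

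Next, I would exploit the Haj\l asz-Poincar\'e inequality \eqref{PoincareHS} to decompose any test function $f\in\dot W^{1,q}(\partial\Omega,\sigma)$ as $f=\sum_k \lambda_k a_k$ via a boundary Calder\'on-Zygmund procedure, where each atom $a_k$ is supported on a boundary ball $\Delta_k=\Delta(x_k,r_k)$, has vanishing mean, and satisfies $\|a_k\|_\infty\lesssim r_k\,\sigma(\Delta_k)^{-1/q}$ with $\bigl(\sum_k|\lambda_k|^q\bigr)^{1/q}\lesssim \|f\|_{\dot W^{1,q}}$. By sublinearity of $\wt N$ and the quasi-triangle inequality in $L^q$, it suffices to establish the single-atom bound $\|\wt N(\nabla u_a)\|_{L^q(\sigma)}\leq C$. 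The contribution over a sector near $\Delta_k$ is controlled by an interior Caccioppoli inequality combined with \eqref{PoincareHS}, replacing $\nabla u_a$ by the oscillation of $a$ over $\Delta_k$ and then invoking the atomic size estimate.

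The tail contribution is where Dirichlet solvability enters. For $X$ far from $\Delta_k$, I would use the Green function representation $u_a(X)=\int a\,d\omega^X$, differentiate in $X$, and pair $\wt N(\nabla u_a)$ against an $L^{q'}=L^{p_0}$ test function $h$. After integration by parts on a sawtooth region adapted to $h$, the pairing reduces to an integral of $h$ against a harmonic measure evaluated at a corkscrew point of $\Delta_k$, which is bounded using $A_\infty$ together with the sharp atomic size. Summing geometrically over the coronae $\Delta(x_k,2^{j+1}r_k)\setminus\Delta(x_k,2^jr_k)$ and over $k$ yields the desired $\|f\|_{\dot W^{1,q}}$ bound.

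The main obstacle is producing sharp pointwise decay estimates for $\nabla_X G(X,\cdot)$ away from its pole in the absence of any boundary smoothness. Such estimates are not a direct consequence of the comparison principle, and the uniform rectifiability of $\partial\Omega$ enters crucially, typically through corona-type decompositions of Whitney regions, to recover the required $|X-x_k|^{-(n-1)}$ decay rate that matches the calibration of the Haj\l asz atoms. The Haj\l asz-Sobolev framework itself is indispensable throughout: it allows the atomic decomposition to proceed without any connectedness assumption on $\partial\Omega$, thereby bypassing the counterexamples in the classical Sobolev setting recalled in the Introduction.
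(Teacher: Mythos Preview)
The paper does not supply a proof of this statement: it is quoted as a result of Mourgoglou and Tolsa \cite{mourgoglou2021regularity} (their Corollary~1.6 and part~(a) of Corollary~1.7), and is used only as a black-box input to illustrate what Corollary~\ref{thm:mn5} yields when combined with known results. So there is no proof in the present paper to compare your proposal against.

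That said, your outline has a genuine structural gap. You propose an atomic decomposition $f=\sum_k\lambda_k a_k$ with $\bigl(\sum_k|\lambda_k|^q\bigr)^{1/q}\lesssim\|f\|_{\dot W^{1,q}}$, and then invoke ``sublinearity of $\wt N$ and the quasi-triangle inequality in $L^q$'' to reduce to a single-atom estimate. But sublinearity only gives the pointwise $\ell^1$-type bound $\wt N(\nabla u_f)\le\sum_k|\lambda_k|\,\wt N(\nabla u_{a_k})$, which does not mesh with an $\ell^q$ control on the coefficients when $q>1$; to close the argument you would need either $\ell^1$ coefficients (i.e.\ a Hardy--Sobolev atomic decomposition, native to the endpoint $q\le 1$) or some almost-orthogonality of the atomic contributions, and you supply neither. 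The Remark following Corollary~\ref{thm:mn5} in the paper flags exactly this obstruction: the Hardy--Sobolev machinery that would make such a reduction work is not available for the Haj\l asz gradient on general Ahlfors-regular sets. The actual argument in \cite{mourgoglou2021regularity} proceeds along a different route---corona-type approximation of $\partial\Omega$ by Lipschitz graphs and transference of the regularity estimate from the Lipschitz case---rather than by a direct atomic reduction in $\dot W^{1,q}$, so the Green-function gradient decay you identify as the main obstacle is circumvented rather than confronted head-on.
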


Thanks to Corollary \ref{thm:mn5}, we know that the above result can be extended to perturbations of the Laplacian as well.

\subsection{Plan of the article}\label{bprff}

A brief summary of this article is as follows. In Section \ref{Selliptic}, we state the precise statement of the assumptions on our domain, and we recall the elliptic theory that shall be needed for the proof of Theorem \ref{thm:mn1}. 
In particular, we construct the elliptic measure and we link it to the solvability of the Dirichlet problem. In Section \ref{Sinfty}, we construct the elliptic measure and Green function with pole at infinity, which are very convenient tools to deal with unbounded domains. 
Section \ref{Sproof} is devoted to the proof of Theorem \ref{thm:mn1}.

\medskip

In the rest of the article, we shall use $A \lesssim B$ when there exists a constant $C$ such that $A \leq C\, B$, where the dependence of $C>0$ into the parameters will be either obvious from context or recalled. We shall also write $A\approx B$ when $A\lesssim B$ and $B \lesssim A$.

\section{Our assumptions and the elliptic theory}

\label{Selliptic}

\subsection{Our assumptions on the domain}

In addition to the fact that the boundary $\partial \Omega$ is $d$-dimensional Ahlfors regular - see \eqref{DEFSIG} - we assume two extra hypotheses on the domain: the interior corkscrew point condition (quantitative openness)  and the interior Harnack chain condition (quantitative connectedness).
\begin{Def}[Corkscrew point condition] \label{defCPC}
We say that $\Omega$ satisfies the corkscrew point condition if there exists $c_{0}>0$ such that, for any $x\in \partial \Omega$ and any $r>0$, there exists $X\in B(x,r) \cap \Omega$ such that $B(X,c_0r) \subset \Omega$. 

A such point $X$ is called the corkscrew point associated to $x$ and $r$. Sometimes, the pair $(x,r)$ will be given by a boundary ball $\Delta$, that is we say that $X$ is a corkscrew point associated to a boundary ball $\Delta$ if $X$ is a corkscrew point associated to $x$ and $r$ where $\Delta = \Delta(x,r):=B(x,r) \cap \partial \Omega$.
\end{Def}

\begin{Def}[Harnack chain condition] \label{defHCC}
We say that $\Omega$ satisfies the Harnack chain condition if the following holds. For any $K>1$, there exists an integer $N_K$ such that for any $X,Y\in \Omega$ that satisfies $|X-Y| \leq K  \min\{\delta(X),\delta(Y)\}$, there exist at most $N_K$ balls $B_1,\dots B_{N_K}$ such that
\begin{enumerate}
\item $X\in B_1$ and $Y\in B_{N_K}$,
\item $2B_i \in \Omega$ for $1 \leq i \leq N_K$,
\item $B_i \cap B_{i+1} \neq \emptyset$ for $1\leq i \leq N_K - 1$.
\end{enumerate}
\end{Def}

\begin{Def}[Uniform domain] \label{defuniform}
We say that $\Omega$ is uniform if $\Omega$ satisfies the corkscrew point condition and the Harnack chain condition, and if $\partial \Omega$ is Ahlfors regular.

The constants in \eqref{DEFSIG}, Definition \ref{defCPC}, and Definition \ref{defHCC} are refered as ``the uniform constants of $\Omega$''. 
\end{Def}

Lemmas 11.7 and 2.1 of \cite{david2017elliptic} show that:

\begin{Prop} \label{lowuniform}
Let $d<n-1$, and let $\Omega = \R^n \setminus \Gamma$. If $\Gamma = \partial \Omega$ is $d$-dimensional Ahlfors regular, then $\Omega$ is uniform.
\end{Prop}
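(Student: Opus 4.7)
The plan is to verify the two non-trivial conditions in Definition \ref{defuniform}: the corkscrew point condition and the Harnack chain condition (Ahlfors regularity of $\partial \Omega$ is part of the hypothesis). Both will rest on the same geometric fact: since $d<n-1$, the set $\Gamma$ is so thin that, quantitatively, it cannot disconnect $\R^n$ and cannot fill up a ball in $\R^n$.

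For the corkscrew condition, fix $x\in\Gamma$ and $r>0$, and let me try to find $X\in B(x,r)$ with $\dist(X,\Gamma)\geq c_0 r$. The idea is a volume argument. By Ahlfors regularity, I can cover $\Gamma\cap B(x,2r)$ by at most $N\lesssim \epsilon^{-d}$ balls of radius $\epsilon r$ centered on $\Gamma$. The tube
\[T_\epsilon := \{Y\in \R^n : \dist(Y,\Gamma\cap B(x,2r)) < \epsilon r\}\]
is then contained in a union of balls of radius $2\epsilon r$, so $|T_\epsilon|\lesssim N(\epsilon r)^n \lesssim \epsilon^{n-d} r^n$. Since $n-d\geq 2$, choosing $\epsilon=c_0$ small enough (depending only on $n$, $d$, and $C_\sigma$) forces $|T_\epsilon\cap B(x,r)|<|B(x,r)|$, so some $X\in B(x,r)\setminus T_{c_0}$ exists. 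Any such $X$ satisfies $\dist(X,\Gamma)\geq c_0 r$ and is the required corkscrew point.

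For the Harnack chain condition, fix $X,Y\in\Omega$ with $|X-Y|\leq K\min\{\delta(X),\delta(Y)\}$ and set $r:=\min\{\delta(X),\delta(Y)\}$. I would connect $X$ to $Y$ inside the neighborhood $U:=\{Z : \dist(Z,[X,Y])<r/8\}$, where $[X,Y]$ is the line segment. The plan is to lay down a sequence of $\approx K$ overlapping balls of radius $\eta r$ (with $\eta$ a small fixed constant) along $[X,Y]$. For each ball $B_i$ in this tentative chain, its center $Z_i$ may unfortunately be close to $\Gamma$, so I replace $Z_i$ by a corkscrew-type point: by the same volume computation as above applied within the ball $B(Z_i,\eta r/2)$, I can find $Z_i'\in B(Z_i,\eta r/2)$ with $\delta(Z_i')\geq c_0\eta r/2$, provided $\eta r$ exceeds the scale on which Ahlfors regularity applies, which we have since $r\leq \delta(X)$ and $\eta$ is controlled. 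The modified balls $B_i':=B(Z_i',c_0\eta r/4)$ satisfy $2B_i'\subset \Omega$ and, by choosing the initial spacing fine enough compared to $c_0\eta$, still overlap consecutively. The total number of balls is bounded by $N_K\lesssim K\eta^{-1}$.

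The main obstacle, and the only step that requires genuine care, is making the Harnack chain argument quantitative: one has to guarantee that the ``corkscrew-shifted'' centers $Z_i'$ still form a chain of consecutively overlapping balls, which forces one to choose the density of the original subdivision of $[X,Y]$ in terms of $c_0$ (itself determined by the Ahlfors regularity constant and the codimension gap $n-d\geq 2$). Once these dependencies are tracked, the number of balls $N_K$ depends only on $K$, $n$, $d$, and $C_\sigma$, as required by Definition \ref{defHCC}, and Proposition \ref{lowuniform} follows.
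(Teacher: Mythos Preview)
The paper's own proof of this proposition is simply a citation to Lemmas 2.1 and 11.7 of \cite{david2017elliptic}; your attempt already goes further than that. Your corkscrew argument is correct and is the standard volume/covering proof (this is essentially Lemma 2.1 in \cite{david2017elliptic}).

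Your Harnack chain argument, however, has a genuine gap. When you shift each tentative center $Z_i$ to a point $Z_i'\in B(Z_i,\eta r/2)$ with $\delta(Z_i')\ge c_0\eta r/2$, the displacement $|Z_i-Z_i'|$ can be as large as $\eta r/2$, while the balls $B_i'=B(Z_i',c_0\eta r/4)$ that you want to chain have radius only $c_0\eta r/4$ with $c_0<1$. Thus
\[
|Z_i'-Z_{i+1}'|\ \le\ |Z_i-Z_{i+1}|+\eta r
\]
can exceed $c_0\eta r/2$ no matter how small you make the spacing $|Z_i-Z_{i+1}|$: the obstruction is the size of the \emph{shift}, not the spacing. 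So ``choosing the initial spacing fine enough compared to $c_0\eta$'' does not rescue the overlap, and the chain can break.

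What is missing is a mechanism that produces points that are simultaneously far from $\Gamma$ \emph{and} close to each other, rather than perturbing each $Z_i$ independently. One way (as carried out in \cite{david2017elliptic}) is to use the codimension hypothesis $d<n-1$ to find, via a slicing/projection argument in the tube around $[X,Y]$, a whole curve from $X$ to $Y$ that stays at distance $\gtrsim r$ from $\Gamma$; once such a curve exists, laying down overlapping balls along it is immediate. Another route is to first use corkscrew points at geometrically increasing scales to reduce to the case $|X-Y|\approx\min\{\delta(X),\delta(Y)\}$, where the straight segment already works. Either way, the Harnack chain step needs an idea beyond the pointwise volume argument you used for corkscrews.
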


\subsection{Quantitative version of absolutely continuity}

In this article, we focus on doubling measures on $\partial \Omega$, which are non-negative Borel measures $\mu$ that verify
\begin{equation}
\mu(2\Delta) \leq C_\mu \mu(\Delta) \quad \text{ for any boundary ball } \Delta \subset \partial \Omega. 
\end{equation}
Two measures that will be considered in this paper are the Ahlfors regular (hence doubling) measure $\sigma$  and the elliptic measure with pole at infinity $\omega$ that will be constructed in Section \ref{Sinfty} and is doubling according to Lemma \ref{DBPWI}. Those two measures will be  comparable, more precisely $A^\infty$-absolutely continuous with each other, and the definition is given below.

\begin{Def}[$A_\infty$-absolute continuity]  \label{defAinfty}
Let $\nu,\mu$ be two doubling measures on $\partial \Omega$. We say that $\mu$ is  $A_\infty$-absolute continuous with respect to $\nu$ - or $ \mu \in A_\infty(\nu)$ in short - if for each $\epsilon>0$, there exists $\xi=\xi(\epsilon)>0$ such that for every surface ball $\Delta$, and every Borel set $E\subset \Delta$, we have that:
\begin{align}\label{AINF.eq00}
    \frac{\nu(E)}{\nu(\Delta)}<\xi\implies \frac{\mu(E)}{\mu(\Delta)}<\epsilon.
\end{align}
\end{Def}

The $A^\infty$-absolute continuity is related to the following stronger property.

\begin{Def}[The Reverse H\"older class $RH_p$] \label{defRHp} Let $\nu,\mu$ be two doubling measures on $\partial \Omega$ that are absolutely continuous with respect to each other. We say that $\nu \in RH_p(\mu)$ if there exists a constant $C_p$ such that for every surface ball $\Delta$, the Radon-Nikodym derivative $k=\frac{d\nu}{d\mu}$ satisfies
\begin{align}\label{RHQ.eq00}
    \Big ( \fint_{\Delta}|k|^p d\mu\Big )^{1/p}\leq C_p \fint_{\Delta}k d\mu = C_p \frac{\nu(\Delta)}{\mu(\Delta)}.
\end{align}
\end{Def}

The $A_\infty$ and $RH_p$ class satisfy several important properties, which are recalled here. 

\begin{Th}[Properties of $A_\infty$ measures; Theorem 1.4.13 of \cite{kenig1994harmonic};\cite{stromberg1989weighted}]\label{THCABO}
Let $\mu,\nu$ be two doubling measures on $\partial \Omega$, and let $\Delta$ be a surface ball. The following statements hold.
\begin{enumerate}[label=(\roman*)]
    \item\label{CB.01} If $\mu\in A_\infty(\nu)$, then $\nu$ is absolutely continuous with respect to $\nu$ on $\Delta$.
    \item\label{CB.02} The class $A_\infty$ is an equivalence relationship, that is $\mu\in A_\infty(\nu)$ implies $\nu \in A_\infty(\mu)$. 
    \item\label{CB.03} We have that $\mu \in A_\infty(\nu)$ if and only if there exist a constant $C>0$ and $\theta>0$ such that for each surface ball $\Delta$ and each Borel set $E\subset \Delta$, we have that
    \begin{align*}
        \frac{\mu(E)}{\mu(\Delta)}\leq C\Big (\frac{\nu(E)}{\nu(\Delta)}\Big )^{\theta}.
    \end{align*}
    \item\label{CB.05} $\mu \in A_\infty(\nu)$ if and only if we can find $p>1$ such that $\mu \in RH_p(\nu)$, i.e.
    \[A_\infty(\nu)=\bigcup\limits_{p>1}RH_p(\nu).\]
    \item\label{CB.06} $\mu \in RH_p(\nu)$ if and only if the uncentered Hardy-Littlewood maximal function with the measure $\mu$ defined as
    \begin{align*}
        (\mathcal M_\mu f)(x):=\sup_{\Delta \ni x} \fint_{\Delta'} |f| d\mu
    \end{align*}
    verifies the estimate
    \begin{align*}
        \|\mathcal M_\mu f\|_{L^{p'}(\partial \Omega, \nu)}\leq C \|f\|_{L^{p'}(\partial \Omega ,\nu)} \qquad \text{ for } f\in L^{p'}(\partial \Omega,\nu),
    \end{align*}
    where $p'$ is the H\"older conjugate of $p$, that is $\frac{1}{p}+\frac{1}{p'}=1$.
\end{enumerate}
\end{Th}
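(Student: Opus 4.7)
The plan is to establish the five properties in sequence, with the reverse-H\"older characterization (iv) serving as the central pivot. Part (i) is immediate from the definition: if $E \subset \Delta$ satisfies $\nu(E)=0$, then $\nu(E)/\nu(\Delta) < \xi(\epsilon)$ for every $\epsilon>0$, so \eqref{AINF.eq00} forces $\mu(E)/\mu(\Delta) < \epsilon$ for every $\epsilon$, hence $\mu(E)=0$ (the statement has an evident typo; it should read $\mu \ll \nu$). The three ``easy'' directions among (iii), (iv), (v) are also routine: the power law in (iii) trivially implies \eqref{AINF.eq00} by choosing $\xi = (\epsilon/C)^{1/\theta}$; the H\"older inequality applied to \eqref{RHQ.eq00} produces a density bound giving $RH_p \Rightarrow A_\infty$; and writing $d\nu = k\, d\mu$, H\"older combined with the Hardy-Littlewood-Coifman-Weiss maximal theorem on $(\partial\Omega,\mu)$ gives $RH_p \Rightarrow$ the maximal bound in (v).

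The main analytic content lies in two steps. First, to deduce the power law (iii) from \eqref{AINF.eq00}, I would iterate the definition via a Calder\'on-Zygmund/Vitali stopping-time argument on the boundary: fix $\epsilon < 1$ and set $\xi = \xi(\epsilon) < 1$; given $E \subset \Delta$ with $\nu(E)/\nu(\Delta) \leq \xi^k$, decompose $\Delta$ into maximal sub-balls $\{\Delta_j\}$ where the $\nu$-density of $E$ first exceeds $\xi^{k-1}$, apply \eqref{AINF.eq00} to each sub-ball, and use the doubling property of $\mu$ to control the overlaps. After $k$ iterations this yields $\mu(E) \leq \epsilon^k \mu(\Delta)$, producing (iii) with $\theta = \log\epsilon/\log\xi$. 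Second, to upgrade $A_\infty$ to $RH_p$ in (iv), I would invoke Gehring's self-improvement lemma: the power-law density bound translates into a good-$\lambda$ inequality for the Radon-Nikodym derivative $k = d\mu/d\nu$, and integrating in the $\lambda$-parameter yields $k \in L^{1+\eta}(\Delta,\nu)$ for some $\eta = \eta(C,\theta) > 0$, together with the reverse-H\"older estimate \eqref{RHQ.eq00} on every boundary ball. This is the only step that really uses the metric structure beyond doubling, through the Vitali covering lemma.

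From (iv) the remaining items follow quickly. Symmetry (ii): once $\mu \in RH_{1+\eta}(\nu)$, I would invert the estimate on $k$ to obtain a reverse-H\"older bound of $\nu$ against $\mu$, whence $\nu \in A_\infty(\mu)$ by the easy direction of (iv). The converse in (v) I would obtain by testing the maximal inequality against indicator functions: applying $\|\mathcal{M}_\mu \1_E\|_{L^{p'}(\nu)} \lesssim \nu(E)^{1/p'}$ to $E \subset \Delta$ and noting that $\mathcal{M}_\mu \1_E \geq \mu(E)/\mu(\Delta)$ on $\Delta$ produces a power-decay bound in the reverse direction, which by the classical duality between the $A_p(\mu)$ and $RH_q(\mu)$ classes on spaces of homogeneous type yields \eqref{RHQ.eq00}. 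The main obstacle is the Gehring self-improvement in (iv); every other step is either a direct computation from the definition, a single application of H\"older's inequality, or a standard Vitali covering argument, all of which carry over verbatim to the doubling metric measure space $(\partial\Omega,\sigma)$.
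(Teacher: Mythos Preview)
The paper does not give its own proof of this theorem: it is quoted verbatim as a known result from Kenig's monograph and Str\"omberg--Torchinsky, and no argument follows. Your sketch is the standard one found in those references (Calder\'on--Zygmund stopping time to pass from \eqref{AINF.eq00} to the power law, Gehring's lemma to upgrade $A_\infty$ to $RH_p$, and the usual duality/testing arguments for (ii) and (v)), so there is nothing to compare against and your outline is appropriate as a self-contained justification.
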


\subsection{The basic elliptic theory} \label{SSelliptic}

To lighten the notations, in the rest of the article, we shall write $\delta(X)$ for $\dist(X,\partial \Omega$), $w(X)$ for $\delta(X)^{d+1-n}$, $dm(X)$ for $w(X)dX$, and $B_X$ for $B(X,\delta(X)/4)$. The measure $m$ is doubling, as shown in Lemma 2.3 of \cite{david2017elliptic}, but more importantly $m$ satisfies some boundary and interior Poincar\'e inequalities (Lemma 4.2 in \cite{david2017elliptic} when $d<n-1$, and Theorem 7.1 in \cite{david2020elliptic} for the statement in any dimension).

\medskip

The correct function spaces to study our elliptic equations are the weighted Sobolev space
\begin{align}\label{def:wsp}
W:=\{u\in L^1_{loc}(\Omega): \, \|u\|_W:= \|\nabla u\|_{L^2(\Omega,dm)} < +\infty\}
\end{align}
and the space of traces 
\begin{align}\label{def:hspa}
H:=\{f: \, \|f\|_H:= \int_{\partial\Omega} \int_{\partial\Omega} \frac{|f(x)-f(y)|}{|x-y|^{d+1}}d\sigma(x)d\sigma(y)<\infty \}.
\end{align}
For those spaces, we can construct a bounded trace operator $\Tr : \, W \to H$, by trace operator, we mean that $\Tr (u) = u$ whenever $u\in W \cap C^0(\overline{\Omega})$, which is uniquely defined by the density of $W \cap C^0(\overline{\Omega})$ in $W$ (see Lemma 9.19 in \cite{david2020elliptic}). 
We shall also need
\[W_0 := \{u\in W, \, \Tr(u) = 0\},\]
which is also the completion of $C^\infty_0(\Omega)$ with the norm $\|.\|_W$, and the local versions of $W$ defined for any open set $E \subset \R^n$ as
\begin{align*}
    W_r(E):=\{u\in L^1_{loc}(E \cap \Omega), \varphi u\in W\ \text{for all $\varphi \in C_0^\infty(E)$}\}.
\end{align*}
Note that $E$ is not necessarily a subset of $\Omega$, and that $W_r(\R^n) = W^{1,2}_{loc}(\overline{\Omega}, dm) \varsubsetneq W$.

\medskip

We are now ready to talk about weak solutions to $\mathcal{L} u = 0$. Recall that $\mathcal L = - \diver (w \mathcal A \nabla)$ for a matrix $\mathcal A$ that satisfies \eqref{ELLIP}. Let $F \subset \Omega$ be an open set. We say that $u$ is a weak solution of $\mathcal{L}u=f$ in $F$ if $u\in W_r(F)$ and for any $\varphi\in C_0^\infty(F)$,
\begin{align*}
    \int_\Omega \mathcal{A}\nabla u\cdot \nabla \varphi dm=0.
\end{align*}
We can always construct a unique weak solution via the Lax-Milgram theorem.

\begin{Lemma}[Lemma 9.3 of {\cite{david2017elliptic}}]\label{THLMG}
For any $h\in W^{-1} := (W_0)^*$ and $f\in H$, there exists a unique $u\in W$ such that $\Tr (u) = f$ and 
\[\int_\Omega \mathcal A\nabla u \cdot \nabla \varphi \, dm = \left< h, \varphi\right>_{W^{-1},W_0}  \quad \text{ for } \varphi \in W_0.\]
Moreover, there exists $C>0$ independent of $h$ and $f$ such that:
\begin{align*}
    \|u\|_{W} \leq C(\|f\|_{H}+\|h\|_{W^{-1}}).
\end{align*}
\end{Lemma}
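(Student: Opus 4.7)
\noindent{\em Proof proposal.} The plan is to reduce the inhomogeneous boundary data to a homogeneous Dirichlet problem, and then apply the Lax-Milgram theorem in the Hilbert space $W_0$.

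First I would invoke the trace theory for $W$: since $\Tr: W \to H$ is bounded and surjective with a bounded right-inverse (this is the part that should already be available in the cited machinery of \cite{david2017elliptic,david2020elliptic}), we may pick an extension $F\in W$ with $\Tr(F) = f$ and $\|F\|_W \leq C\|f\|_H$. Setting $v := u - F$, the problem becomes: find $v\in W_0$ such that
\begin{equation*}
\int_\Omega \mathcal A \nabla v \cdot \nabla \varphi \, dm = \langle h,\varphi\rangle_{W^{-1},W_0} - \int_\Omega \mathcal A \nabla F \cdot \nabla \varphi \, dm \qquad \text{for all } \varphi \in W_0.
\end{equation*}

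Next, I would set up the Lax-Milgram framework on $W_0$. The space $W_0$, being the completion of $C^\infty_0(\Omega)$ under $\|\cdot\|_W = \|\nabla \cdot\|_{L^2(\Omega,dm)}$, is a Hilbert space with inner product $(u,v) \mapsto \int_\Omega \nabla u \cdot \nabla v \, dm$ (this is an honest inner product on $W_0$ because the vanishing trace pins down additive constants, via the boundary Poincar\'e inequality stated for $m$ in the elliptic theory of the paper). Define
\begin{equation*}
a(v,\varphi) := \int_\Omega \mathcal A \nabla v \cdot \nabla \varphi \, dm, \qquad T(\varphi) := \langle h,\varphi\rangle_{W^{-1},W_0} - \int_\Omega \mathcal A \nabla F \cdot \nabla \varphi \, dm.
\end{equation*}
The ellipticity bounds \eqref{ELLIP} give the continuity $|a(v,\varphi)| \leq \lambda^{-1}\|v\|_W \|\varphi\|_W$ and the coercivity $a(v,v) \geq \lambda \|v\|_W^2$, while
\begin{equation*}
|T(\varphi)| \leq \|h\|_{W^{-1}} \|\varphi\|_W + \lambda^{-1} \|F\|_W \|\varphi\|_W \leq C(\|h\|_{W^{-1}} + \|f\|_H) \|\varphi\|_W.
\end{equation*}

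Then Lax-Milgram produces a unique $v\in W_0$ solving $a(v,\varphi) = T(\varphi)$ for all $\varphi\in W_0$, with $\|v\|_W \leq \lambda^{-1}\|T\|_{W_0^*} \leq C(\|h\|_{W^{-1}} + \|f\|_H)$. Setting $u := v + F$ yields $\Tr(u) = f$, the variational identity in the lemma, and the norm estimate by the triangle inequality. Uniqueness follows because the difference of two solutions lies in $W_0$ and is annihilated by the coercive form $a$. The only real obstacle is the trace extension step: it requires knowing that $\Tr: W \to H$ admits a bounded right inverse, which is a nontrivial but standard ingredient of the weighted trace theory developed in the cited references and can be quoted directly.
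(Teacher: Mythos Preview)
Your argument is correct and is the standard Lax--Milgram reduction. Note that the paper does not actually prove this lemma: it is quoted verbatim as Lemma~9.3 of \cite{david2017elliptic}, so there is no in-paper proof to compare against. Your outline---extend $f$ to some $F\in W$ via a bounded right inverse of $\Tr$, then solve for $v=u-F\in W_0$ by Lax--Milgram using the coercivity and boundedness from \eqref{ELLIP}---is exactly the approach taken in the cited reference, and your identification of the one nontrivial input (the existence of a bounded extension $H\to W$) is accurate.
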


Let us now recall several classical results (Caccioppoli inequality, Moser estimate, and Harnack inequality inside the domain) that will be useful later. Since they are interior results, that is where the weight $w$ has no degeneracy, they are direct consequences of the classical theory. The precise statements can be found in \cite{david2017elliptic} and \cite{david2020elliptic}.

\begin{Lemma}[Interior Caccioppoli inequality and Moser estimate]\label{LMICAE}
Let $B$ be a ball of radius $r>0$ such that $2 B\subset \Omega$ and let $u\in W_r(2 B)$ be a weak solution to $\mathcal L u = 0$ in $2B$, then
\begin{align}\label{icaeq}
    \int_B|\nabla u|^2 dm\leq Cr^{-2}\int_{2B}u^2dm,
\end{align}
and
\begin{align}\label{icaeq222}
    \sup_B |u| \leq C \fint_{2B} |u|\, dm,
\end{align}
where $C>0$ depends on dimension $d, n$, and the elliptic constant $\lambda$.
\end{Lemma}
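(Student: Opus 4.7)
The plan is to reduce both estimates to the classical unweighted elliptic theory. Writing $B = B(X_0, r)$, the assumption $2B \subset \Omega$ together with the $1$-Lipschitz property of $\delta$ gives $\delta(Y) \in [\delta(X_0)-2r, \delta(X_0)+2r]$ for every $Y \in 2B$. Under the Whitney-type convention implicit in the statement (namely $\delta(X_0) \gtrsim r$), this forces $\delta(Y) \approx \delta(X_0) \approx r$ uniformly on $2B$, hence $w(Y) = \delta(Y)^{d+1-n} \approx r^{d+1-n}$ is comparable to a positive constant on $2B$ and $dm \approx r^{d+1-n}\, dY$ there. Dividing through by the constant $r^{d+1-n}$, the equation $\mathcal L u = -\diver(w\mathcal A \nabla u) = 0$ becomes a classical divergence-form uniformly elliptic equation with bounded measurable coefficients and ellipticity constant comparable to $\lambda$.

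For the Caccioppoli inequality \eqref{icaeq}, I pick a cutoff $\eta \in C^\infty_0(2B)$ with $\eta \equiv 1$ on $B$, $0 \leq \eta \leq 1$, and $|\nabla \eta| \lesssim 1/r$. Since $u \in W_r(2B)$, the product $\varphi = \eta^2 u$ is compactly supported in $2B$ and lies in $W_0$, so it is an admissible test function. Inserting $\varphi$ into $\int \mathcal A \nabla u \cdot \nabla \varphi\, dm = 0$, expanding $\nabla(\eta^2 u) = \eta^2 \nabla u + 2\eta u \nabla \eta$, and using the lower ellipticity bound $\mathcal A \xi \cdot \xi \geq \lambda |\xi|^2$ together with Young's inequality to absorb the cross term $\int \eta |\nabla \eta|\,|u|\,|\nabla u|\, dm$ into the left-hand side, I obtain $\int_B |\nabla u|^2\, dm \lesssim r^{-2} \int_{2B} u^2\, dm$.

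For the Moser sup bound \eqref{icaeq222}, I invoke the classical Moser iteration: applying the Caccioppoli-type inequality to $(u^+)^p$ (and $(u^-)^p$) for $p \geq 1$ — legitimate because these are subsolutions and the weight is essentially constant on $2B$ — and combining with the Sobolev embedding on Euclidean balls yields a reverse-H\"older chain over a geometrically shrinking sequence of radii between $r$ and $2r$. The standard geometric summation produces $\sup_B |u| \lesssim (\fint_{2B} u^2\, dY)^{1/2}$, and a further Moser self-improvement step (or a John--Nirenberg argument applied to $\log u$ in the positive case) reduces the $L^2$ average on the right to an $L^1$ average. The equivalence of $dY$ and $dm$ on $2B$ then converts this into the stated bound involving $\fint_{2B} |u|\, dm$. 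The only genuine subtlety throughout is the Whitney-type comparison $\delta \approx r$ on $2B$; once this is in hand, everything else is essentially verbatim from standard references (Gilbarg--Trudinger, Han--Lin), consistent with the paper's assertion that both estimates follow directly from the classical theory.
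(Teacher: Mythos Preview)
Your approach is exactly what the paper intends: it does not give a proof at all, merely saying that since these are interior results ``where the weight $w$ has no degeneracy, they are direct consequences of the classical theory,'' with a pointer to \cite{david2017elliptic} and \cite{david2020elliptic}. Your write-up carries out precisely this reduction.

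There is, however, one inaccuracy worth flagging. From $2B\subset\Omega$ you only get the \emph{lower} bound $\delta(X_0)\ge 2r$; nothing in the hypothesis gives $\delta(X_0)\lesssim r$, so your claim $\delta(X_0)\approx r$ and hence $w\approx r^{d+1-n}$ is unjustified. The correct statement is that for $Y\in 2B$ one has $\delta(Y)\in[\delta(X_0)-2r,\,\delta(X_0)+2r]$, and since $2r\le\delta(X_0)$ this yields $\delta(Y)\approx\delta(X_0)$ (with absolute constants) on, say, $\tfrac{3}{2}B$ --- so $w\approx w(X_0)=\delta(X_0)^{d+1-n}$ there, not $r^{d+1-n}$. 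The specific value of the constant is of course irrelevant to the argument; what matters is that $w\mathcal A$ is uniformly elliptic on $\tfrac{3}{2}B$ with ellipticity comparable to $\lambda$ up to a factor depending only on $n,d$. Your Caccioppoli computation with $\varphi=\eta^2 u$ does not even need this (it goes through verbatim with the measure $dm$, no comparison to $dY$ required), and the Moser iteration can be run between $B$ and $\tfrac{3}{2}B$, after which the doubling of $m$ lets you enlarge the average to $2B$. With that correction the argument stands.
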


The interior Caccioppoli inequality (and the Moser estimate) holds if we replace $2B$ by $\alpha B$ in (\ref{icaeq}), and the constant $C$ will then depend on $\alpha>1$ too. Note also that we can very well replace a ball by a (Whitney) cube, that is a cube $I \in \R^n$ for which $2I \subset \Omega$, and that we can replace $dm$ by $dX$ since the weight $w$ is non-degenerated on $B$.

\begin{Cor} \label{ReverseHolder}
Let $B$ be a ball of radius $r>0$ such that $4B\subset \Omega$ and let $u\in W_r(2 B)$ be a weak solution to $\mathcal Lu = 0$ in $2B$, then
\[\left( \fint_B |\nabla u|^2 \, dX\right)^\frac12 \leq C \fint_{2B} |\nabla u| \, dX,\]
where $C>0$ depends on dimension $d, n$, and the elliptic constant $\lambda$.
\end{Cor}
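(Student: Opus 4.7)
The statement is a reverse Hölder inequality for the gradient of an $\mathcal{L}$-solution, and the plan is to chain three standard interior ingredients together: the Caccioppoli inequality, the Moser $L^\infty$-estimate, and the classical Euclidean $(1,1)$-Poincaré inequality. Because $4B \subset \Omega$, the weight $w$ is comparable to a positive constant on $4B$, so each of these classical estimates is available there and the measures $dm$ and $dX$ can be interchanged up to constants on this region.

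The crucial observation is that $\mathcal{L}$ contains no lower-order terms, hence $\mathcal{L}(u-c) = 0$ for every constant $c$. I would choose $c := \fint_{2B} u \, dX$. First, applying Caccioppoli (Lemma~\ref{LMICAE}) to $u-c$ with inner ball $B$ and outer ball $\frac{3}{2} B$ --- the intermediate dilation factor $3/2$ being permitted by the remark following the lemma --- yields
\[
\fint_B |\nabla u|^2 \, dX \leq \frac{C}{r^2} \fint_{\frac{3}{2} B} (u-c)^2 \, dX.
\]
Next, the Moser bound \eqref{icaeq222} applied to $u - c$ on $\frac{3}{2} B$ with outer ball $2B$ (dilation factor $4/3$) gives $\sup_{\frac{3}{2} B} |u-c| \leq C \fint_{2B} |u-c|\, dX$; inserting this into the previous display produces
\[
\fint_B |\nabla u|^2 \, dX \leq \frac{C}{r^2} \left( \fint_{2B} |u-c| \, dX \right)^{2}.
\]
Finally, since $c$ is the mean of $u$ over $2B$, the classical $(1,1)$-Poincaré inequality on the Euclidean ball $2B$ gives $\fint_{2B} |u-c| \, dX \leq C r \fint_{2B} |\nabla u| \, dX$; plugging this in and taking square roots concludes the proof.

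I do not foresee any serious obstacle: the corollary is a routine interior consequence of De Giorgi--Nash--Moser theory. The only point demanding a line of justification is that the Caccioppoli and Moser estimates in Lemma~\ref{LMICAE} can be applied with dilation factors other than $2$ (namely $3/2$ and $4/3$), which is already explicitly granted by the remark following the lemma.
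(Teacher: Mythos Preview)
Your proposal is correct and follows essentially the same route as the paper: subtract the mean $c=\fint_{2B}u\,dX$, apply Caccioppoli from $B$ to $\tfrac32 B$, then the Moser $L^\infty$-bound from $\tfrac32 B$ to $2B$, and finish with the $(1,1)$-Poincar\'e inequality on $2B$. The only cosmetic difference is that the paper carries the intermediate estimates in the measure $dm$ and then passes to $dX$ via $w\approx\text{const}$ on $2B$, whereas you make that reduction at the outset; the two arguments are interchangeable.
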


\bp 
First, observe that $w(X) \approx w(Y)$ for $X,Y\in 2B$, that is $\fint_E v \, dm \approx \fint_E v\, dX$ whenever $v$ is nonnegative and $E \subset 2B$. Therefore, if $u_{2B} = \fint_{2B} u \, dX$, then
\[\begin{split}
\left(\fint_B|\nabla u|^2 dX\right)^\frac12 & \approx \left(\fint_B|\nabla(u-u_{2B})|^2 dX\right)^\frac12 \lesssim \frac1r \left( \fint_{\frac32 B} |u-u_{2B}|^2 dm\right)^\frac12\\
&  \lesssim \frac1r  \fint_{2B} |u-u_{2B}| dm \lesssim \frac1 r \fint_{2B} |u-u_{2B}| \, dX,
\end{split}\]
where we invoke successively \eqref{icaeq} and \eqref{icaeq222} and use the fact that we can replace $2B$ by $\alpha B$ in those inequalities. The lemma follows then from the $L^1$-Poincar\'e inequality.
\ep

\begin{Lemma}[Harnack inequality]\label{LMHANK}
Let $B$ be a ball such that $2B\subset \Omega$, and let $u\in W_r(2B)$ be a non-negative solution to $\mathcal Lu=0$ in $2B$. Then
\begin{align*}
    \sup_B u\leq C \inf_B u,
\end{align*}
where $C$ depends on dimension $d,n$, and elliptic constant $\lambda$. 
\end{Lemma}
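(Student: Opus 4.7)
The plan is to reduce this Harnack inequality for the degenerate operator $\mathcal L = -\diver(w\mathcal A\nabla)$ to the classical Moser Harnack inequality for uniformly elliptic operators in divergence form, by exploiting the fact that on $2B$ the weight $w$ is bounded above and below.

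More precisely, let $B = B(X_0,r)$ with $2B\subset \Omega$, so that every $X\in 2B$ satisfies
\[ \delta(X_0)-2r \leq \delta(X) \leq \delta(X_0)+2r,\]
and since $2B\subset \Omega$ implies $\delta(X_0)\geq 2r$, we obtain $\delta(X)\approx \delta(X_0)$ for $X\in 2B$, with implicit constants depending only on $n$. Consequently $w(X) = \delta(X)^{d+1-n} \approx w(X_0) =: w_0$ on $2B$, uniformly.

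Next, I would observe that the weak formulation of $\mathcal L u = 0$ on $2B$ reads
\[ \int_{2B} \mathcal A(X)\nabla u\cdot \nabla \varphi\, w(X)\, dX = 0 \qquad \text{for all } \varphi\in C^\infty_0(2B).\]
Setting $\widetilde{\mathcal A}(X) := w(X)\mathcal A(X)$, we see that $u$ is a weak solution, in the \emph{unweighted} sense, of $-\diver(\widetilde{\mathcal A}\nabla u) = 0$ on $2B$. Because of the uniform two-sided bound $w(X)\approx w_0$ on $2B$ and the ellipticity condition \eqref{ELLIP} on $\mathcal A$, the matrix $\widetilde{\mathcal A}$ satisfies
\[ \lambda' |\xi|^2 \leq \widetilde{\mathcal A}(X)\xi\cdot \xi, \qquad |\widetilde{\mathcal A}(X)\xi\cdot \zeta| \leq (\lambda')^{-1}|\xi||\zeta| \qquad \text{on } 2B,\]
for some $\lambda'>0$ depending only on $\lambda$, $n$ and $d$ (the dependence on $w_0$ cancels since it appears as a common factor in the upper and lower bounds).

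The non-negative function $u\in W_r(2B)$ is therefore a non-negative weak solution on $2B$ of a classical uniformly elliptic equation in divergence form with measurable coefficients and ellipticity constant $\lambda'$. The classical Moser Harnack inequality (see e.g. Theorem 8.20 of Gilbarg--Trudinger) then yields
\[ \sup_B u \leq C \inf_B u,\]
with $C$ depending only on $n$, $d$, and $\lambda$, as claimed. No step here should be a serious obstacle: the whole point is that the degeneracy of $w$ only occurs near $\partial\Omega$, so interior estimates on balls $B$ with $2B\subset\Omega$ automatically reduce to the unweighted theory; the only thing to check carefully is that the ellipticity constant of $\widetilde{\mathcal A}$ genuinely depends only on $\lambda$, $n$, $d$, which follows because the factor $w_0$ rescales the equation without altering the ratio of upper to lower ellipticity bounds.
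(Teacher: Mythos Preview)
Your approach is exactly what the paper has in mind: the paper does not actually prove this lemma, it simply remarks (just before Lemma~\ref{LMICAE}) that since these are interior estimates and the weight $w$ has no degeneracy away from $\partial\Omega$, they are ``direct consequences of the classical theory.'' Your write-up makes this reduction explicit and is essentially correct.

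There is one small slip worth fixing. You assert that $\delta(X)\approx\delta(X_0)$ for all $X\in 2B$ with constants depending only on $n$. This fails at the edge of $2B$: from $2B\subset\Omega$ you get only $\delta(X_0)\ge 2r$, so for $X\in 2B$ the lower bound $\delta(X)\ge\delta(X_0)-2r$ can be arbitrarily close to $0$ (take $\delta(X_0)=2r$ and $X$ near the part of $\partial(2B)$ closest to $\partial\Omega$). Hence $\sup_{2B} w/\inf_{2B} w$ is not uniformly bounded, and the ellipticity constant $\lambda'$ of $\widetilde{\mathcal A}$ on all of $2B$ is not controlled by $\lambda,n,d$ alone. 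The fix is trivial: run the same argument on $\tfrac32 B$ instead. For $X\in\tfrac32 B$, the inequality $r\le\tfrac12\delta(X_0)$ gives
\[
\tfrac14\,\delta(X_0)\ \le\ \delta(X_0)-\tfrac32 r\ \le\ \delta(X)\ \le\ \delta(X_0)+\tfrac32 r\ \le\ \tfrac74\,\delta(X_0),
\]
so $w\approx w_0$ on $\tfrac32 B$ with constants depending only on $n,d$. Then $\widetilde{\mathcal A}$ is uniformly elliptic on $\tfrac32 B$ with constant depending only on $\lambda,n,d$, and the classical Moser--Harnack inequality applied to the pair $B\subset\tfrac32 B$ yields the conclusion.
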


We also have a version of  Lemma \ref{LMICAE} for a ball $B$ centered at the boundary, and in this case, the solution $u\in W_r(2B)$ has to satisfy $\Tr u = 0$ on $2B \cap \partial \Omega$. In order to keep our article short, we will not present the result explicitly, but it is worthwhile to mention the H\"older continuity of solutions at the boundary. 

\begin{Lemma}[H\"older continuity at the boundary; Lemmas 11.32 and 15.14 in \cite{david2020elliptic}]\label{LMHIB}
Let $B:=B(x,r)$ be a ball with a center $x \in \partial \Omega$ and radius $r>0$, and let $X$ be a corkscrew point associated to $(x,r/2)$. For any non-negative solution $u\in W_r(B)$ to $\mathcal Lu = 0$ in $B \cap \Omega$ such that $\Tr u = 0$ on $B \cap \partial \Omega$. There exists $\alpha>0$ such that for $0<s<r$,
\begin{align*}
    \sup_{B(x,s)} u \leq C\Big (\frac{s}{r}\Big )^\alpha u(X),
\end{align*}
where the constants $\alpha,C$ depend on dimension $n$, the uniform constants of $\Omega$, and elliptic constant $\lambda$. 
\end{Lemma}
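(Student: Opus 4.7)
The plan is to prove the boundary H\"older estimate via the classical oscillation-decay scheme, adapted to the degenerate operator $\mathcal{L} = -\diver(w\mathcal{A}\nabla)$. The argument splits into three pieces: establish a single-scale decay at any boundary point, iterate to obtain H\"older decay of $\sup_{B(x,s)\cap\Omega} u$ as $s \to 0$, and finally replace the supremum on $B(x,r/2)\cap\Omega$ by $u(X)$ using Harnack chains.

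\textbf{Step 1 (single-scale boundary decay).} First I would establish the following: there exists $\theta \in (0,1)$, depending only on $n$, $\lambda$, and the uniform constants of $\Omega$, such that whenever $v \in W_r(B(y,2\rho))$ is a non-negative weak solution of $\mathcal{L} v = 0$ in $B(y,2\rho)\cap\Omega$ with $\Tr v = 0$ on $B(y,2\rho)\cap\partial\Omega$ for some $y \in \partial\Omega$, one has $\sup_{B(y,\rho)\cap\Omega} v \leq \theta \sup_{B(y,2\rho)\cap\Omega} v$. The trick is to extend $v$ by zero to $B(y,2\rho)\setminus\Omega$; the vanishing trace promotes the extension to a non-negative weak sub-solution of $\mathcal{L}$ on $B(y,2\rho)$ in the weighted Sobolev space. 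Setting $M := \sup_{B(y,2\rho)\cap\Omega} v$, the function $M - v$ is a non-negative super-solution that equals $M$ on $B(y,2\rho)\cap\partial\Omega$, a set whose $m$-measure is a definite fraction of $m(B(y,2\rho))$ by the Ahlfors regularity of $\partial\Omega$. A weak Harnack inequality for non-negative super-solutions of $\mathcal{L}$ (obtained from Moser iteration using the boundary Poincar\'e inequality of \cite{david2020elliptic}, Theorem~7.1, together with the $A_2$-type behavior of the weight $w = \delta^{d+1-n}$) then produces $M - v \geq c M$ on $B(y,\rho)\cap\Omega$, i.e.\ $v \leq (1-c)M$ there.

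\textbf{Step 2 (iteration and replacement by $u(X)$).} Iterating Step 1 with $y = x$ at the dyadic scales $\rho = r/2^{k+1}$ yields, for every $0 < s \leq r/2$,
\[
\sup_{B(x,s)\cap\Omega} u \leq C\bigl(s/r\bigr)^\alpha \sup_{B(x,r/2)\cap\Omega} u, \qquad \alpha := -\log_2 \theta.
\]
It remains to bound $\sup_{B(x,r/2)\cap\Omega} u \leq C u(X)$. For $Y \in B(x,r/2)\cap\Omega$ with $\delta(Y) \geq c_0 r/4$ (a ``deep'' point), $X$ and $Y$ are both corkscrew-scale points with $|X - Y| \leq r$, so the Harnack chain condition (Definition \ref{defHCC}) and the interior Harnack inequality (Lemma \ref{LMHANK}) give $u(Y) \leq C u(X)$. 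For a ``shallow'' point $Y$ with $\delta(Y) < c_0 r/4$, pick $y \in \partial\Omega$ with $|Y - y| = \delta(Y)$; then $y \in B(x,3r/4)$, and applying the H\"older decay just obtained, but centered at $y$ on the ball $B(y, r/4) \subset B(x,r)$ (on which $u$ is still a solution with vanishing trace), yields $u(Y) \leq C (\delta(Y)/r)^\alpha \sup_{B(y,r/4)\cap\Omega} u$, and the latter supremum is bounded by $C u(X)$ thanks to the deep case. Combining these estimates with the iterated decay gives the stated H\"older bound.

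\textbf{Main obstacle.} The real technical content lives in Step 1: a boundary weak Harnack inequality for the degenerate operator $\mathcal{L}$ with quantitative constants. In our weighted setting one cannot cite a black box; rather, one must set up Moser iteration using the weight $w = \delta^{d+1-n}$, verify the boundary Poincar\'e--Sobolev inequality against $m$ (which is where the $d$-Ahlfors regularity of $\partial\Omega$ and the Fabes--Kenig--Serapioni framework enter), and propagate the zero Dirichlet datum inward. This was carried out separately in Lemmas~11.32 and 15.14 of \cite{david2020elliptic} for the codimension-one and higher-codimension cases.
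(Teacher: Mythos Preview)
The paper does not prove this lemma; it is quoted from \cite{david2020elliptic} (Lemmas 11.32 and 15.14 there), so there is no in-paper argument to compare against. Your three-part strategy---single-scale boundary decay, dyadic iteration, then the Carleson-type bound $\sup_{B(x,r/2)\cap\Omega} u \lesssim u(X)$---is the standard route and matches the structure of the cited source.

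There is, however, a real gap in Step~1. You assert that $B(y,2\rho)\cap\partial\Omega$ has ``$m$-measure a definite fraction of $m(B(y,2\rho))$''. This is false: $dm = \delta^{d+1-n}\,dX$ is absolutely continuous with respect to Lebesgue measure on $\R^n$, while $\partial\Omega$ has Hausdorff dimension $d<n$ and hence zero Lebesgue measure, so $m(B\cap\partial\Omega)=0$. This already fails in codimension one: the paper assumes only \emph{interior} corkscrews, so $B\setminus\Omega$ may have zero Lebesgue measure as well (e.g.\ when $\Omega=\R^n\setminus\Gamma$ for an $(n{-}1)$-Ahlfors regular $\Gamma$). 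The classical ``positivity on a set of definite measure'' input to the weak Harnack inequality is therefore unavailable. What replaces it in \cite{david2020elliptic} is a capacity-type mechanism: the boundary Poincar\'e inequality (Theorem~7.1 there) controls $\|v\|_{L^2(B\cap\Omega,m)}$ directly by $\rho\|\nabla v\|_{L^2(B\cap\Omega,m)}$ whenever $\Tr v=0$ on $B\cap\partial\Omega$, without any average subtracted, and this is what feeds the De~Giorgi--Moser iteration on level sets to produce the one-step decay. You name the right ingredient but invoke it through the wrong mechanism.

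A minor point in Step~2: the phrase ``the latter supremum is bounded by $Cu(X)$ thanks to the deep case'' hides a small circularity, since $\sup_{B(y,r/8)\cap\Omega} u$ may itself be attained at a shallow point. The standard fix is to choose the shallow threshold $\varepsilon$ so that $C\varepsilon^\alpha<\tfrac12$; then shallow points cannot realize the supremum over a slightly larger ball, forcing it to occur at a deep point where Harnack chains to $X$ apply directly.
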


We shall mention quickly that weak solutions are also H\"older continuous inside the domain, and so the solutions $u$ that verify the assumptions of Lemma \ref{LMHIB} are H\"older continuous in $\frac12B \cap \overline{\Omega}$.

\subsection{Elliptic measure} \label{SSharmonic}
For solutions $u\in W$ to $\mathcal Lu=0$, we have a maximum principle that states
\begin{equation} \label{MP}
\sup_\Omega u \leq \sup_{\partial \Omega} \Tr(u) \quad \text{ and } \quad \inf_\Omega u \geq \inf_{\partial \Omega} \Tr(u),
\end{equation}
see Lemma 12.8 in \cite{david2020elliptic}. The maximum principle and Riesz representation theorem can be used to construct a family of positive regular Borel measures $\omega^X$ on $\partial \Omega$, which is called the elliptic measure. 

\begin{Prop}[Elliptic measure, Lemmas 12.13 and 12.15 in \cite{david2020elliptic}]\label{DEFHM}
There exists a unique collection of Borel regular probability measures $\{\omega^X\}_{X\in \Omega}$ on $\partial \Omega$ such that, for any continuous compactly supported function $f\in H$, the solution $u_f$ constructed as 
\begin{align} \label{defhm1}
    u_f(X):=\int_{\partial \Omega} f(x)d\omega^{X}(x)
\end{align}
is the solution to $\mathcal Lu = 0$ and $\Tr u_f = f$ given by Lemma \ref{THLMG}. 

Moreover, the construction \eqref{defhm1} can be extended to all bounded functions on $\partial \Omega$ and provides a weak solution to $\mathcal Lu=0$. 
\end{Prop}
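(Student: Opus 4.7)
The plan is to build $\omega^X$ via Riesz representation, working from the Lax-Milgram solution.

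\textbf{Step 1: Pointwise evaluation of solutions.} Fix a continuous compactly supported $f \in H$. Lemma \ref{THLMG} with $h=0$ produces a unique $u_f \in W$ solving $\mathcal L u_f = 0$ with $\Tr u_f = f$. By the interior Moser estimate \eqref{icaeq222} together with the interior H\"older regularity (Lemma \ref{LMHIB} has an interior analogue), $u_f$ admits a locally H\"older continuous representative in $\Omega$. I will always take this representative, so that $X \mapsto u_f(X)$ is well-defined at every $X \in \Omega$.

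\textbf{Step 2: The linear functional $\Lambda_X$.} For each $X \in \Omega$, set $\Lambda_X(f) := u_f(X)$ on the subspace $\mathcal D := C_c(\partial \Omega) \cap H$ of $C_0(\partial \Omega)$. Linearity of $\Lambda_X$ follows from the uniqueness clause in Lemma \ref{THLMG}. The maximum principle \eqref{MP} gives the two-sided bound $\inf_{\partial \Omega} f \leq u_f(X) \leq \sup_{\partial \Omega} f$, hence $\Lambda_X$ is positive and $|\Lambda_X(f)| \leq \|f\|_\infty$. The subspace $\mathcal D$ contains all compactly supported Lipschitz functions (a direct check using \eqref{def:hspa} and Ahlfors regularity), so $\mathcal D$ is dense in $C_0(\partial \Omega)$ in the sup norm; therefore $\Lambda_X$ extends uniquely to a bounded positive linear functional on $C_0(\partial \Omega)$.

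\textbf{Step 3: Riesz representation and probability.} The Riesz--Markov--Kakutani theorem produces a unique positive regular Borel measure $\omega^X$ on $\partial \Omega$ with $\Lambda_X(f) = \int_{\partial \Omega} f \, d\omega^X$ for all $f \in C_0(\partial \Omega)$, in particular for $f \in \mathcal D$, which gives \eqref{defhm1}. To see $\omega^X(\partial \Omega) = 1$, pick $\varphi_k \in \mathcal D$ with $0 \leq \varphi_k \leq 1$ and $\varphi_k \nearrow 1$ pointwise. The constant function $1$ is a weak solution with trace $1$, so by uniqueness $u_1 \equiv 1$; applying the maximum principle to $u_{\varphi_k} - u_1 = u_{\varphi_k - 1}$ shows $u_{\varphi_k}(X) \to 1$, and monotone convergence gives $\omega^X(\partial \Omega) = 1$. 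Uniqueness of the whole family $\{\omega^X\}$ follows from the uniqueness in Riesz representation once $\Lambda_X$ is pinned down.

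\textbf{Step 4: Extension to bounded Borel $f$.} Define $u_f(X) := \int_{\partial \Omega} f \, d\omega^X$ for bounded Borel $f$; this is well-defined and bounded by $\|f\|_\infty$. To show $u_f$ is a weak solution, choose $f_k \in \mathcal D$ with $\|f_k\|_\infty \leq \|f\|_\infty$ and $f_k \to f$ pointwise $\omega^X$-a.e.\ for every $X$ in a countable dense set; dominated convergence gives $u_{f_k}(X) \to u_f(X)$ at each such $X$. The interior Caccioppoli inequality \eqref{icaeq} applied to $u_{f_k} - u_{f_\ell}$ produces uniform bounds for $\{\nabla u_{f_k}\}$ on compact subsets of $\Omega$, whence a subsequence converges weakly in $W^{1,2}_{loc}(\Omega, dm)$ to some $v \in W_r(\Omega)$ solving $\mathcal L v = 0$. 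Pointwise identification via Moser's estimate forces $v = u_f$ in $\Omega$, concluding that $u_f$ is the desired weak solution.

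\textbf{Main obstacle.} The delicate point is Step 4: one must justify that the bounded pointwise limit $u_f$, defined purely through the measure formula, inherits membership in $W_r(\Omega)$ and the weak equation. This is where the interior Caccioppoli estimate \eqref{icaeq} is essential, since it converts the uniform $L^\infty$-bound $\|u_{f_k}\|_\infty \leq \|f\|_\infty$ into a uniform $L^2(dm)$-bound on $\nabla u_{f_k}$ on each compact subset of $\Omega$, allowing a weak compactness argument that passes the PDE to the limit.
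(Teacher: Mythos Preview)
The paper does not prove this proposition; it is quoted verbatim from Lemmas~12.13 and~12.15 of \cite{david2020elliptic}. Your outline (maximum principle $\Rightarrow$ positive bounded functional $\Rightarrow$ Riesz representation) is the standard route and is essentially what is done in that reference, so there is no meaningful strategic difference to report.

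That said, two steps in your execution are not quite right.

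\textbf{Step 3 (probability).} The maximum principle applied to $u_{\varphi_k-1}$ only gives $\inf_{\partial\Omega}(\varphi_k-1)\le u_{\varphi_k}(X)-1\le 0$, and since $\varphi_k$ is compactly supported the lower bound is $-1$ for every $k$. This yields $0\le u_{\varphi_k}(X)\le 1$ but not $u_{\varphi_k}(X)\to 1$. What actually forces $\omega^X(\partial\Omega)=1$ is the boundary H\"older decay (Lemma~\ref{LMHIB}): the function $1-u_{\varphi_k}$ is a nonnegative solution with zero trace on $\Delta(x_0,R_k)$ with $R_k\to\infty$, so for any fixed $X$ one gets $(1-u_{\varphi_k})(X)\lesssim (\,|X-x_0|/R_k\,)^\alpha\to 0$. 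Monotone convergence then finishes the job.

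\textbf{Step 4 (bounded data).} Your approximating sequence is underspecified: you ask for $f_k\in\mathcal D$ with $f_k\to f$ pointwise $\omega^X$-a.e.\ ``for every $X$ in a countable dense set'', but you neither explain why such a sequence exists nor why convergence on a dense set of poles suffices. The clean fix is to note first that, by the Harnack inequality, all the measures $\{\omega^X\}_{X\in\Omega}$ are mutually absolutely continuous, so a single null set works for every pole simultaneously. Then for bounded Borel $f$ one can take compactly supported Lipschitz $f_k$ with $|f_k|\le\|f\|_\infty$ and $f_k\to f$ $\omega^{X_0}$-a.e.\ (hence $\omega^X$-a.e.\ for all $X$); dominated convergence gives $u_{f_k}(X)\to u_f(X)$ at \emph{every} $X\in\Omega$, and your Caccioppoli/weak-compactness argument then goes through without the awkward countable-dense-set detour.
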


Since the elliptic measure is a family of measures, the classical definitions of $A_\infty$ and $RH_p$ should be adapted to fit the scenario of elliptic measure.
  
\begin{Def}[$A_\infty$ and $RH_p$ for elliptic measure]\label{def.arhp}
We say that $\{\omega^X\}_{X\in \Omega}$ is of class $A_\infty$ with respect to the measure $\sigma$, or simply $\{\omega^X\}_{X\in \Omega} \in A_\infty(\sigma)$, if for every $\epsilon>0$, there exists $\xi=\xi(\epsilon)>0$ such that for any boundary ball $\Delta=\Delta(x,r)$ and any $E\subset \Delta$, we have
\begin{align*}
    \frac{\sigma(E)}{\sigma(\Delta')}<\xi\implies \omega^{X_0}(E)< \epsilon,
\end{align*}
where $X_0$ is a corkscrew point associated to $\Delta$.

We say that $\{\omega^X\}_{X\in \Omega} \in RH_p(\sigma)$, for some $p\in (1,\infty)$, if there exists a constant $C\geq 1$ such that for each surface ball $\Delta$ with corkscrew point $X_0\in \Omega$, we have 
\begin{align}\label{RHP.eq00}
    \Big (\frac{1}{\sigma(\Delta)}\int_{\Delta}(k^{X_0})^p d\sigma\Big )^{1/p}\leq C\frac{1}{\sigma(\Delta)}\int_{\Delta}k^{X_0} d\sigma.
\end{align}
\end{Def}

\medskip

Let us recall a result from \cite{mayboroda2020carleson} showing that in higher co-dimension, the solvability of Dirichlet problem in $L^{p'}$ is equivalent to the fact that $\omega^X \in RH_p(\sigma)$. It is an analogue of Theorem 1.7.3 of \cite{kenig1994harmonic}.

\begin{Th}\label{DAIEUI}
Let $\omega^X$ be the elliptic measure associated to $\mathcal{L}$, and let $p,p'\in (1,\infty)$ such that $\frac{1}{p}+\frac{1}{p'}=1$. Then, the following statements are equivalent:
\begin{enumerate}[label=(\roman*)]
    \item The Dirichlet problem is solvable in $L^p$, that is, for each $f\in C_c(\partial \Omega)$, the solution $u_f$ constructed by \eqref{defhm1} satisfies
    \begin{align*}
        \|N(u)\|_{L^{p'}(\partial \Omega,\sigma)}\leq C\|f\|_{L^{p'}(\partial \Omega,\sigma)},
    \end{align*}
    where $N(u)$ is the non-tangential maximal function - see \eqref{DEFN} - and the constant $C$ is independent of $f$.
    \item We have that $\omega\ll \sigma$ and $\omega^X \in RH_p(\sigma)$ (see Definition \ref{def.arhp}).
\end{enumerate}
\end{Th}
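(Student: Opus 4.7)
The plan is to prove the two implications separately, exploiting the representation formula $u_f(X) = \int f \, d\omega^X$ from Proposition \ref{DEFHM} together with the comparison principle for the elliptic measure developed in Subsection \ref{SScomparison}.

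\emph{Direction (ii) $\Rightarrow$ (i).} The key step is to establish the pointwise non-tangential bound
$$ N(u_f)(x) \lesssim \mathcal{M}_\omega f(x), $$
where $\mathcal{M}_\omega$ is the Hardy--Littlewood maximal operator with respect to a suitable reference elliptic measure (a fixed corkscrew pole when $\Omega$ is bounded, or the pole at infinity of Section \ref{Sinfty} otherwise). To obtain it, I would fix $X \in \gamma(x)$, set $\Delta_0 = \Delta(x, C\delta(X))$ with corkscrew point $X_0$, and decompose
$$ \int_{\partial\Omega} |f| \, d\omega^X \;=\; \int_{\Delta_0} |f| \, d\omega^X \;+\; \sum_{k\geq 1} \int_{2^k\Delta_0 \setminus 2^{k-1}\Delta_0} |f| \, d\omega^X. $$
The local piece is bounded via the change-of-poles principle, which compares $\omega^X$ with $\omega^{X_0}(\cdot)/\omega^{X_0}(\Delta_0)$. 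Each far piece is controlled by the H\"older decay at the boundary (Lemma \ref{LMHIB}), producing a geometric factor in $k$, and summing the series yields the desired estimate. The hypothesis $\omega \in RH_p(\sigma)$, combined with Theorem \ref{THCABO}\ref{CB.06}, then gives the $L^{p'}(\sigma)$-boundedness of $\mathcal{M}_\omega$, which is exactly the Dirichlet estimate.

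\emph{Direction (i) $\Rightarrow$ (ii).} I would argue by duality. Fix a boundary ball $\Delta = \Delta(x,r)$ with corkscrew point $X_0$, and let $f \in C_c(\Delta)$. The corkscrew condition guarantees $X_0 \in \gamma(x')$ for every $x' \in E := \Delta(x, 2\delta(X_0))$, so that $|u_f(X_0)| \leq N(u_f)(x')$ on $E$; since $\sigma(E) \approx \sigma(\Delta)$, integrating the $p'$-th power gives
$$ \sigma(\Delta)\, |u_f(X_0)|^{p'} \lesssim \int_E N(u_f)^{p'} \, d\sigma \leq C \|f\|_{L^{p'}(\sigma)}^{p'}. $$
Writing $u_f(X_0) = \int_\Delta f\, k^{X_0}\, d\sigma$ and taking the supremum over $f$ with $\|f\|_{L^{p'}(\sigma)} \leq 1$ yields, by duality, $\|k^{X_0}\|_{L^p(\Delta,\sigma)} \leq C \sigma(\Delta)^{-1/p'}$. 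Combined with the Bourgain-type non-degeneracy $\omega^{X_0}(\Delta) \gtrsim 1$ (a standard consequence of the Harnack inequality and the maximum principle applied to a barrier), this is exactly the reverse H\"older inequality \eqref{RHP.eq00}.

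The main obstacle is the pointwise domination $N(u_f) \lesssim \mathcal{M}_\omega f$: uniformly controlling $\omega^X$ as $X$ varies through the whole non-tangential cone requires delicate use of the change-of-poles principle across all scales, together with the kernel decay at the boundary. In the codimension-one setting this is classical \cite{kenig1994harmonic}, and the extension to higher codimension was carried out in \cite{mayboroda2020carleson}; rather than reprove these comparison lemmas I would invoke them from Subsection \ref{SScomparison}.
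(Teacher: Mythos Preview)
The paper does not give its own proof of Theorem \ref{DAIEUI}; it simply states the result and attributes it to \cite{mayboroda2020carleson} (as the higher-codimension analogue of \cite[Theorem 1.7.3]{kenig1994harmonic}). Your proposal is a correct outline of the standard argument used in those references, and you yourself note this at the end.

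One small point worth making explicit in the direction (i) $\Rightarrow$ (ii): you write $u_f(X_0) = \int_\Delta f\,k^{X_0}\,d\sigma$ before having established absolute continuity. In fact the absolute continuity is a \emph{consequence} of the duality bound you derive: the inequality $\bigl|\int_\Delta f\,d\omega^{X_0}\bigr| \le C\sigma(\Delta)^{-1/p'}\|f\|_{L^{p'}(\Delta,\sigma)}$, valid for all $f\in C_c(\Delta)$, shows that the functional $f\mapsto \int f\,d\omega^{X_0}$ extends boundedly to $L^{p'}(\Delta,\sigma)$, and Riesz representation then produces the density $k^{X_0}\in L^p(\Delta,\sigma)$. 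So the order of the argument should be reversed, but the content is right.
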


\subsection{Green functions} \label{SSGreen}

The Green function is a function defined on $\Omega \times \Omega$ which is morally the solution to $\mathcal Lu = \delta_Y$ - where $\delta_Y$ is the Dirac distribution - with zero trace. Its properties are given below.

\begin{Th}[Lemma 14.60 and 14.91 in \cite{david2020elliptic}]\label{LMGNEX}
There exists a unique function $G:\Omega\times\Omega\rightarrow \mathbb{R}\cup \{+\infty\}$ such that $G(X,.)$ is continuous on $\Omega \setminus \{X\}$, locally integrable in $\Omega$ for any $X\in \Omega$, and such that for any $f\in C^\infty_0(\Omega)$, the function defined by
\begin{equation} \label{eqGRep}
u_f(X) := \int_\Omega G(X,Y) f(Y) dY
\end{equation}
belongs to $W_0$ and is a solution to $\mathcal Lu=f$ in the sense that 
\[\int_\Omega \mathcal A\nabla u_f \cdot \nabla \varphi \, dm = \int_\Omega f\varphi \, dm \qquad \text{ for } \varphi \in W_0.\]
Moreover, for any $Y\in \Omega$
\begin{enumerate}[label=(\roman*)]
    \item $G(.,Y)\in W_r(\mathbb{R}^n\setminus \{Y\})$ and $\Tr[G(.,Y)]= 0$.
    
    \item For $Y\in \Omega$ and $\varphi\in C_0^\infty(\Omega)$,
    \begin{align*}
        \int_\Omega\mathcal{A}\nabla_X G(X,Y)\cdot \nabla \varphi(X)dm(X)=\varphi(Y).
    \end{align*}
    In particular, $G(.,Y)$ is a solution to $\mathcal{L}u=0$ in $\Omega\setminus\{y\}$.
    
    \item For every $Y\in \Omega$, $G(.,Y)\in W^{1,2}(\Omega \setminus B_Y, dm)$ and
   \begin{align*}
      \int_{\Omega \setminus B_Y} |\nabla_X G(X,Y)|^2 dm(X) \leq C \delta(Y)^{-d},   
  \end{align*}   
  
    \item For $Y\in \Omega$ and $q\in [1,\frac{n}{n-1})$, $G(.,Y) \in W^{1,q}(2B_Y)$ and
  \begin{align*}
      \left(\fint_{2B_Y} |\nabla_X G(X,Y)|^q dm(X) \right)^\frac1q \leq C_q \delta(Y)^{-d}.   
  \end{align*}
\end{enumerate}
In the inequalities above, $C>0$ depends on $n$, the uniform constants of $\Omega$, and the ellipticity constant $\lambda$, while $C_q$ depends on the same parameters and $q$.
\end{Th}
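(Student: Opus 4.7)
The plan is to construct $G(\cdot, Y)$ by approximating $\delta_Y$ with a smoothed source, solving the resulting non-singular problem via Lax-Milgram, and passing to the limit, in the spirit of Gr\"uter-Widman. Fix $Y \in \Omega$ and, for each $\epsilon \in (0, \delta(Y)/8)$, set $f_\epsilon^Y := \1_{B(Y, \epsilon)}/m(B(Y, \epsilon)) \in L^2(\Omega, dm)$. Since $\varphi \mapsto \int f_\epsilon^Y \varphi \, dm$ defines a bounded functional on $W_0$, Lemma \ref{THLMG} produces a unique $G^\epsilon(\cdot, Y) \in W_0$ satisfying
$$\int_\Omega \mathcal A \nabla_X G^\epsilon(X, Y) \cdot \nabla \varphi(X) \, dm(X) = \fint_{B(Y, \epsilon)} \varphi \, dm, \quad \varphi \in W_0.$$
In parallel, I construct the adjoint approximation $G^{*, \epsilon}(\cdot, X) \in W_0$ for $\mathcal L^* = -\diver(w \mathcal A^T \nabla)$, which will serve the symmetry argument below.

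The heart of the proof is uniform-in-$\epsilon$ control of $G^\epsilon$. Following the Gr\"uter-Widman-Stampacchia scheme, I test the equation with truncations $(G^\epsilon - k)_+ \in W_0$ for appropriate thresholds $k$ and invoke the weighted Sobolev embedding (available because $m$ is doubling and supports the Poincar\'e inequality, as recalled at the start of Subsection \ref{SSelliptic}) to obtain a uniform $L^\infty$ bound on $G^\epsilon$ outside $B_Y$ of the correct order in $\delta(Y)$. Feeding this pointwise bound into a Caccioppoli argument with a cutoff $\eta$ vanishing on $B_Y$ gives the energy estimate (iii). Estimate (iv) is then obtained by dyadic decomposition of $2B_Y$ into annular shells around $Y$, applying (iii) on each shell and summing; the constraint $q < n/(n-1)$ is the convergence threshold of the resulting geometric series. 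Armed with these uniform bounds, a diagonal extraction produces a limit $G(\cdot, Y) \in W_r(\mathbb R^n \setminus \{Y\})$ with $\Tr[G(\cdot, Y)] = 0$, which is (i); De Giorgi-Nash-Moser yields continuity on $\Omega \setminus \{Y\}$, and passing to the limit in the weak formulation, using $\fint_{B(Y, \epsilon)} \varphi \, dm \to \varphi(Y)$ for $\varphi \in C_0^\infty(\Omega)$, yields (ii).

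For the representation formula, I establish the symmetry $G(X, Y) = G^*(Y, X)$ by cross-testing: plugging $G^{*, \epsilon'}(\cdot, X)$ into the equation for $G^\epsilon(\cdot, Y)$ and vice versa produces two double integrals that coincide, and sending $\epsilon, \epsilon' \to 0$ yields the identity. Given $f \in C_0^\infty(\Omega)$ and the Lax-Milgram solution $u_f \in W_0$ to $\mathcal L u_f = f$, testing the defining equation of $G^{*, \epsilon}(\cdot, X)$ against $u_f$ together with the equation for $u_f$ with test function $G^{*, \epsilon}(\cdot, X) \in W_0$ gives $\fint_{B(X, \epsilon)} u_f \, dm = \int f(Y) G^{*, \epsilon}(Y, X) \, dm(Y)$, and passing to the limit $\epsilon \to 0$ using continuity of $u_f$ together with the symmetry produces the representation (modulo the standard $dY$ versus $dm(Y)$ normalization between (ii) and the stated formula). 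Uniqueness of $G$ is then immediate from Lemma \ref{THLMG} applied to the difference of two candidates. The main technical obstacle is the uniform pointwise estimate with the correct $\delta(Y)$-dependence: the weight $w = \delta^{d+1-n}$ degenerates at $\partial \Omega$ when $d < n - 1$, so the isoperimetric/Sobolev inputs of classical Gr\"uter-Widman must be replaced by their weighted analogs, available here precisely because $m$ is doubling and Poincar\'e-supporting.
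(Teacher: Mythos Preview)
The paper does not prove this theorem; it is quoted verbatim from \cite{david2020elliptic} (Lemmas 14.60 and 14.91), with no argument supplied beyond the remark that explicit pointwise bounds are omitted. So there is no ``paper's own proof'' to compare against here.

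That said, your sketch is essentially the construction carried out in \cite{david2020elliptic}, and the paper itself confirms this indirectly: in the proof of Lemma~\ref{lemtechnical} it invokes exactly your approximants $G^\rho_*(\cdot,X)\in W_0$ solving $\int_\Omega \mathcal A^T\nabla G^\rho_*\cdot\nabla\phi\,dm=\fint_{B(X,\rho)}\phi\,dm$ and cites \cite[Section 14]{david2020elliptic} for the fact that a subsequence converges uniformly on compacts of $\overline\Omega\setminus\{X\}$ to $G_*(\cdot,X)$. Your outline---Lax--Milgram on smoothed sources, Stampacchia-type truncation for uniform $L^\infty$ bounds using that $m$ is doubling and supports Poincar\'e, Caccioppoli for (iii), dyadic summation for (iv), diagonal extraction, cross-testing for symmetry and the representation formula---matches the Gr\"uter--Widman scheme that \cite{david2020elliptic} adapts to the weighted setting. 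Your observation about the $dY$ versus $dm(Y)$ normalization in \eqref{eqGRep} versus item (ii) is also correct; this is a harmless discrepancy in the statement.
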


We only provide a condensed version of Theorem 14.60 from \cite{david2020elliptic}. Indeed, we also have explicit pointwise bounds on $G$, but it turns out they are not useful in our article. So we omit them here. 

\begin{Lemma}[Lemma 10.101 of \cite{david2017elliptic}]\label{GEQGT}
Let $G_*$ be the Green function associated with the operator $\mathcal L^*$ (defined from the matrix $\mathcal{A}^T$). For any $X,Y\in\Omega, X\neq Y$, we have $G(X,Y)=G_*(Y,X)$. In particular, the function $Y\mapsto G(X,Y)$ satisfies the estimates given in Theorem \ref{LMGNEX}.
\end{Lemma}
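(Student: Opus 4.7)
The plan is to prove $G(X,Y) = G_*(Y,X)$ by a cross-testing argument that exploits the transpose identity $(\mathcal A \xi) \cdot \eta = \xi \cdot (\mathcal A^T \eta)$ together with the weak formulations in Theorem \ref{LMGNEX}. The naive approach of plugging $\varphi = G_*(\cdot, X_0)$ into the defining equation of $G(\cdot, Y_0)$ fails because $G_*(\cdot, X_0)$ is neither in $C_0^\infty(\Omega)$ nor in $W_0$ (it has a singularity at $X_0$). To avoid this, I would smooth both Green functions by integrating them against compactly supported data.

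Specifically, for $f, g \in C_0^\infty(\Omega)$, I would set
\[
u(X) := \int_\Omega G(X,Y) f(Y)\, dY, \qquad v(X) := \int_\Omega G_*(X,Y) g(Y)\, dY.
\]
Theorem \ref{LMGNEX}, applied to $\mathcal L$ and to $\mathcal L^*$, gives $u, v \in W_0$ together with the weak formulations
\[
\int_\Omega \mathcal A \nabla u \cdot \nabla \varphi\, dm = \int_\Omega f \varphi\, dm, \qquad \int_\Omega \mathcal A^T \nabla v \cdot \nabla \varphi\, dm = \int_\Omega g \varphi\, dm
\]
for every $\varphi \in W_0$. Testing the first identity with $\varphi = v$ and the second with $\varphi = u$, and using $\mathcal A \nabla u \cdot \nabla v = \mathcal A^T \nabla v \cdot \nabla u$ pointwise, I would deduce $\int_\Omega f v\, dm = \int_\Omega g u\, dm$. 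Unfolding the definitions of $u$ and $v$ and applying Fubini yields
\[
\int_\Omega \int_\Omega f(X) g(Y) \bigl[G_*(X,Y) - G(Y,X)\bigr] \, dY\, dX = 0,
\]
and since $f, g \in C_0^\infty(\Omega)$ are arbitrary, this forces $G_*(X,Y) = G(Y,X)$ for almost every $(X,Y) \in \Omega \times \Omega$.

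To upgrade the almost-everywhere identity to pointwise equality whenever $X \neq Y$, I would invoke the continuity statements built into Theorem \ref{LMGNEX}: for each fixed $Y \in \Omega$, the map $X \mapsto G(X, Y)$ solves $\mathcal L u = 0$ on $\Omega \setminus \{Y\}$ and is therefore continuous there via the interior Moser estimate \eqref{icaeq222} and the attendant H\"older regularity, and similarly for $G_*$. Thus the function $(X,Y) \mapsto G_*(X,Y) - G(Y,X)$ is continuous on the open set $\{(X,Y) \in \Omega \times \Omega : X \neq Y\}$ and vanishes almost everywhere, hence vanishes identically. The hard part will be the bookkeeping: checking carefully that $u$ and $v$ really lie in $W_0$ so as to be legitimate test functions against each other, and that the Fubini interchange in the double integral defining $\int_\Omega f v\, dm$ is justified. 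Both concerns reduce to absolute-integrability statements, which are handled using the local $L^q$-gradient control on $G(\cdot, Y)$ near its singularity from Theorem \ref{LMGNEX}(iv) together with the compact support of $f$ and $g$.
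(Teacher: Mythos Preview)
The paper does not supply its own proof of this lemma: it is stated with a direct citation to Lemma~10.101 of \cite{david2017elliptic}, so there is no argument here to compare against. Your cross-testing approach---building $u,v\in W_0$ from $G$ and $G_*$ via the representation formula in Theorem~\ref{LMGNEX}, testing the two weak formulations against each other, and then using separate continuity to pass from almost-everywhere to pointwise---is the standard proof and is correct. One small remark: your claim that $(X,Y)\mapsto G_*(X,Y)-G(Y,X)$ is \emph{jointly} continuous off the diagonal is more than you actually have from Theorem~\ref{LMGNEX}, but you do not need it; separate continuity in each variable (which you do have) suffices, by first fixing a full-measure set of $Y$ for which the identity holds for all $X\neq Y$, and then extending in $Y$ using the continuity of $Y\mapsto G(X,Y)$ and $Y\mapsto G_*(Y,X)$ for each fixed $X$.
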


We need a last technical lemma. 

\begin{Lemma} \label{lemtechnical}
Let $X\in \Omega$ and $\varphi \in C^\infty_0(\R^n \setminus \{X\})$, then
\[\int_\Omega \A^T \nabla G(X,Y) \cdot \nabla \varphi(Y) \, dm(Y) = - \int_{\partial \Omega} \varphi(y) \, d\omega^X(y).\]
\end{Lemma}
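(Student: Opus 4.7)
The plan is to combine the Green function representation (Theorem \ref{LMGNEX}(ii), rewritten in the $Y$-variable via $G(X,Y) = G_*(Y,X)$ from Lemma \ref{GEQGT}) with the characterization of $\omega^X$ in Proposition \ref{DEFHM}, using cutoff arguments to handle the singularity of $G$ at its pole.

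First, set $f := \varphi|_{\partial \Omega}$, which is continuous and compactly supported on $\partial \Omega$, and let $u_f$ denote the associated solution from Proposition \ref{DEFHM}, so that $\mathcal L u_f = 0$, $\Tr u_f = f$, and $u_f(X) = \int_{\partial \Omega} \varphi\, d\omega^X$. Observe that $\varphi \in W$ (its gradient is bounded with compact support, and $w$ is locally integrable near $\partial \Omega$), and $u_f \in W$ by Lemma \ref{THLMG}, so $\eta := \varphi - u_f \in W_0$; since $\varphi(X)=0$, we have $\eta(X) = -u_f(X) = -\int_{\partial \Omega}\varphi\, d\omega^X$.

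The heart of the argument is to prove the identity $\int_\Omega \A^T \nabla_Y G(X,Y) \cdot \nabla \eta(Y)\, dm(Y) = \eta(X)$, which extends Theorem \ref{LMGNEX}(ii) to elements of $W_0$ that are smooth near $X$. Let $\chi_\epsilon \in C^\infty_0(\Omega)$ be a cutoff equal to $1$ on $B(X,\epsilon)$, supported in $B(X,2\epsilon)$, with $|\nabla \chi_\epsilon| \lesssim 1/\epsilon$. Then $\eta(1-\chi_\epsilon) \in W_0$ is supported away from $X$ and is approximable in $W_0$-norm by $C^\infty_0(\Omega \setminus \{X\})$ functions, against which $G(X,\cdot)$, as a weak solution of $\mathcal L^* u = 0$ on $\Omega\setminus\{X\}$ with zero trace, integrates to zero. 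This gives
\[
\int_\Omega (1-\chi_\epsilon)\, \A^T \nabla G \cdot \nabla \eta\, dm \;=\; \int_\Omega \eta\, \A^T \nabla G \cdot \nabla \chi_\epsilon\, dm,
\]
which I would then pass to the limit $\epsilon \to 0$: the left-hand side tends to $\int \A^T \nabla G \cdot \nabla\eta\, dm$ by dominated convergence (using $\nabla \eta \in L^2(dm)$ and Theorem \ref{LMGNEX}(iii)), while the right-hand side rewrites as $\eta(X)\int \A^T \nabla G \cdot \nabla \chi_\epsilon\, dm + O\!\left(\epsilon\,\|\nabla\chi_\epsilon\|_\infty \int_{B(X,2\epsilon)}|\nabla G|\, dm\right)$; Theorem \ref{LMGNEX}(ii) applied to $\chi_\epsilon$ makes the first term equal to $\eta(X)$, and the error vanishes by Theorem \ref{LMGNEX}(iv) combined with the smoothness of $\eta$ near $X$.

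It remains to show $\int_\Omega \A^T \nabla G(X,\cdot) \cdot \nabla u_f\, dm = 0$; combined with the above, this yields
\[
\int_\Omega \A^T \nabla G \cdot \nabla \varphi\, dm = \int_\Omega \A^T \nabla G \cdot \nabla \eta\, dm + \int_\Omega \A^T \nabla G \cdot \nabla u_f\, dm = \eta(X) = -\int_{\partial\Omega}\varphi\, d\omega^X.
\]
For this vanishing, I would define $G_\epsilon := G(X,\cdot)(1-\chi_\epsilon)$, which lies in $W_0$ thanks to Theorem \ref{LMGNEX}(iii)--(iv), and test the weak formulation of $\mathcal L u_f = 0$ against $G_\epsilon$ to obtain $\int \A \nabla u_f \cdot \nabla G_\epsilon\, dm = 0$. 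Expanding $\nabla G_\epsilon = (1-\chi_\epsilon)\nabla G - G\nabla\chi_\epsilon$ and letting $\epsilon\to 0$, the main piece converges to $\int \A\nabla u_f\cdot \nabla G\, dm$, while the correction $\int G\,\A\nabla u_f\cdot \nabla\chi_\epsilon\, dm$ vanishes, using that $\nabla u_f$ is bounded on $B_X$ by interior regularity and $G$ is locally integrable with respect to $dm$ near $X$. The main technical obstacle throughout is the singularity of $G(X,\cdot)$ at $X$, which prevents any direct use of $G$ as a test function in $W_0$ and forces careful cutoff estimates; in each step, however, the local $L^q$ bound on $\nabla G$ near its pole (Theorem \ref{LMGNEX}(iv)) supplies enough integrability to let the cutoff parameter tend to zero.
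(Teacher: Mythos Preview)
Your decomposition $\varphi = \eta + u_f$ with $\eta\in W_0$ and the two identities at the $\epsilon$-level are correct; the gap is in passing to the limit. Specifically, in the last paragraph you assert that ``$\nabla u_f$ is bounded on $B_X$ by interior regularity.'' This is false for operators $-\diver(w\mathcal A\nabla)$ with merely bounded measurable $\mathcal A$: De~Giorgi--Nash--Moser gives H\"older continuity of $u_f$, not Lipschitz bounds, so $\nabla u_f$ is only in $L^2_{loc}$ (or $L^{2+\epsilon}_{loc}$ via Meyers). Without boundedness of $\nabla u_f$ the estimate for the correction term $\int G\,\mathcal A\nabla u_f\cdot\nabla\chi_\epsilon\,dm$ breaks down: bounding by Cauchy--Schwarz together with Caccioppoli on $u_f$ and the pointwise size of $G$ on the annulus gives only an $O(1)$ bound, not $o(1)$. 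The same integrability obstruction makes your dominated convergence claim in the second paragraph unjustified as written: near $X$ one has $\nabla\eta=-\nabla u_f\in L^2$ while $\nabla G\in L^q$ only for $q<\tfrac{n}{n-1}<2$, so $|\nabla G||\nabla\eta|$ is not known to be in $L^1(B_X,dm)$.

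The paper avoids this by approximating $G$ differently: instead of cutting off the true Green function near its pole, it uses the mollified Green function $G^\rho_*(\cdot,X)\in W_0$ solving $\mathcal L^*G^\rho_*=\fint_{B(X,\rho)}$ (from the construction of the Green function itself). Since $G^\rho_*\in W_0$, one can test directly against $\varphi-u_\varphi\in W_0$ and obtain $\int\mathcal A^T\nabla G^\rho_*\cdot\nabla\varphi\,dm=-\fint_{B(X,\rho)}u_\varphi\,dm$; the limit $\rho\to0$ then uses only the $L^2_{loc}(\overline\Omega\setminus\{X\})$ convergence of $\nabla G^\rho_*$ to $\nabla G$ on $\supp\nabla\varphi$ (away from $X$) and the continuity of $u_\varphi$. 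No integrability of $|\nabla G||\nabla u_f|$ at the pole is ever needed. You could repair your argument by replacing the cutoff $\chi_\epsilon$ with this kind of approximation of $G$ at the source level.
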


\bp
Take $\rho< \delta(X)/2$ such that $B(X,\rho) \cap \supp \varphi = \emptyset$. Construct $G^\rho_*(.,X)$ to be the function in $W_0$ that satisfies 
\begin{equation} \label{defGrho99}
\int_\Omega \A^T \nabla G^\rho_*(Y,X) \cdot \nabla \phi(Y) \, dm(Y) = \fint_{B(X,\rho)} \phi \, dm \qquad \text{ for } \phi \in W_0
\end{equation}
as given by Lemma \ref{THLMG} and which was constructed in \cite[Section 14]{david2020elliptic}. As shown in the proof of Theorem 14.60 from \cite{david2020elliptic}, we have that $G^{\rho_\eta}_*(.,X)$ converges to $G_*(.,X) = G(X,.)$ uniformly on compact sets of $\overline{\Omega} \setminus \{X\}$ for a subsequence $\rho_\eta \to 0$, and by the Caccioppoli inequality, we also have that $\nabla G^{\rho_\eta}_*(.,X)$ converges to $\nabla G_*(.,X) = \nabla G(X,.)$ in $L^2_{loc} (\overline{\Omega} \setminus \{X\})$.

Let now $u_\varphi$ be the weak solution in $W$ to $\mathcal Lu_\varphi=0$ in $\Omega$ with $\Tr u_\varphi = \varphi$ given by Lemma \ref{THLMG} . By Proposition \ref{DEFHM}, we have that
\[\int_{\partial \Omega} \varphi(y) \, d\omega^X(y) = u_\varphi(X).\]
Since $u_\varphi\in W_0$ is a weak solution to $\mathcal Lu_\varphi=0$, we have 
\begin{equation} \label{defGrho98}\begin{split}
\int_\Omega \A^T \nabla G^\rho_*(.,X) \cdot \nabla \varphi \, dm 
& = \int_\Omega \A^T \nabla G^\rho_*(.,X) \cdot \nabla [\varphi-u_\varphi] \, dm \\
& = \fint_{B(X,\rho)} [\varphi - u_\varphi]  \, dm  =  - \fint_{B(X,\rho)}  u_\varphi  \, dm 
\end{split}\end{equation}
by \eqref{defGrho99} and the fact that $\varphi \equiv 0$ on $B(X,\rho)$.  As previously mentioned, we have the convergence $\nabla G^{\rho_\eta}_*(.,X) \to \nabla G(X,.)$ in $L^2(\supp \varphi,dm)$, but we also have $\fint_{B(X,\rho)}  u_\varphi  \, dm \to u_\varphi(X)$ because $u_\varphi$ is a solution, hence is continuous. The lemma follows from taking the convergence $\rho \to 0$ in \eqref{defGrho98}.
\ep

\subsection{The comparison principle} \label{SScomparison}

\begin{Th}[Lemma 15.28 of {\cite{david2020elliptic}}] \label{LMCPE}
Let $x\in \partial \Omega$ and $r>0$, and let $X$ be a corkscrew point associated to $x$ and $r$. There exists a constant $C>1$ depending on $n$, $d$, the uniform constants of $\Omega$, and the elliptic constant $\lambda$ such that for $Y\in \Omega\setminus B(x,2r)$,
\begin{align}\label{LMCPE.eq01}
    C^{-1}r^{d-1}G(Y,X)\leq \omega^Y(\Delta(x,r))\leq Cr^{d-1}G(Y,X).
\end{align}
\end{Th}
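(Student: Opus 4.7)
\emph{Proof proposal.} The estimate splits into an upper bound and a lower bound. The plan is to prove the upper bound by a direct integration-by-parts argument via Lemma \ref{lemtechnical}, and then the lower bound by a maximum-principle comparison combining Bourgain-type non-degeneracy of $\omega$ with a pointwise estimate on $G$.

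For the upper bound $\omega^Y(\Delta(x,r)) \lesssim r^{d-1} G(Y,X)$, I would pick a cutoff $\varphi \in C_c^\infty(B(x, 3r/2))$ with $\varphi \equiv 1$ on a neighborhood of $\overline{\Delta(x,r)}$ in $\R^n$ and $|\nabla \varphi| \lesssim r^{-1}$. Since $Y \in \Omega \setminus B(x, 2r)$ lies outside $\supp \varphi$, Lemma \ref{lemtechnical} yields
\[
\omega^Y(\Delta(x, r)) \;\leq\; \int \varphi \, d\omega^Y \;=\; -\int_\Omega \A^T \nabla_Z G(Y, Z) \cdot \nabla \varphi(Z) \, dm(Z).
\]
Estimate the right-hand side by $|\nabla\varphi| \lesssim r^{-1}$ and Cauchy--Schwarz with $m(B(x, 3r/2) \cap \Omega) \approx r^{d+1}$, then apply the boundary version of Caccioppoli's inequality (an adaptation of Lemma \ref{LMICAE}) to $G(Y,\cdot)$, which is an $\mathcal L^*$-solution in $B(x,2r) \cap \Omega$ with vanishing boundary values. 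This converts the gradient $L^2$ norm into an $L^2$ norm of $G(Y,\cdot)$ on $B(x,2r) \cap \Omega$, which the Hölder estimate at the boundary (Lemma \ref{LMHIB}) together with Harnack (Lemma \ref{LMHANK}) and the Harnack chain condition bounds by $G(Y,X)$ times the measure of the ball. Assembling everything gives the upper bound.

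For the lower bound $r^{d-1} G(Y,X) \lesssim \omega^Y(\Delta(x,r))$, I would argue in two stages. First, establish a Bourgain-type non-degeneracy: $\omega^{X'}(\Delta(x,r)) \geq c > 0$ for $X'$ a corkscrew associated to $(x,r)$; this follows by applying Lemma \ref{LMHIB} to the $\mathcal L$-solution $Z \mapsto 1 - \omega^Z(\Delta(x,r))$, which vanishes on $\Delta(x,r)$, at a suitable corkscrew point at a small scale, then transporting the estimate to $X'$ via Harnack chains. Second, derive the pointwise Green estimate $G(Z,X) \lesssim r^{1-d}$ for $Z$ at distance comparable to $r$ from $X$, using Moser's interior estimate for $G(\cdot,X)$ on Whitney balls together with the energy bound in Theorem \ref{LMGNEX}(iii). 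Combining these two ingredients yields, on $\partial B(x, r) \cap \Omega$, the pointwise inequality $c r^{d-1} G(Z, X) \leq \omega^Z(\Delta(x, r))$, separately handling the case $\delta(Z) \gtrsim r$ (where Harnack chains compare both functions to their values at $X'$) and $\delta(Z) \ll r$ (where the boundary behaviour of $\omega^Z$ near $\Delta(x,r)$ dominates and is controlled by Lemma \ref{LMHIB} applied near a boundary point). Since both $G(\cdot, X)$ and $Z \mapsto \omega^Z(\Delta(x, r))$ are positive $\mathcal L$-solutions in $\Omega \setminus \overline{B(x, r)}$ with vanishing traces on $\partial \Omega \setminus \overline{B(x, r)}$, the maximum principle propagates the pointwise comparison on the sphere to all of $\Omega \setminus \overline{B(x, r)}$, and in particular to the range $Y \in \Omega \setminus B(x, 2r)$ stated in the theorem.

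The main obstacle I foresee is the pointwise comparison on $\partial B(x,r) \cap \Omega$: the behaviour of $\omega^Z(\Delta(x,r))$ near the sphere differs sharply depending on whether the nearest boundary point lies inside $\Delta(x,r)$ or not, and one must avoid circular use of boundary Harnack, which in this theory is itself a corollary of CFMS. A secondary technicality is the behaviour at infinity when $\Omega$ is unbounded: applying the maximum principle on the unbounded region $\Omega \setminus \overline{B(x, r)}$ requires adequate decay of the comparison functions, which is guaranteed by the $W$-regularity and finite-energy properties of $G$ and $\omega$ coming from the elliptic theory recalled in Section \ref{Selliptic}.
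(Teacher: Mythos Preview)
The paper does not prove this statement; it is quoted verbatim from \cite{david2020elliptic} (Lemma 15.28) as part of the background elliptic theory in Section~\ref{SScomparison}. So there is no proof in the paper to compare against, and your proposal has to be judged on its own terms.

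Your upper-bound argument is the standard one and is correct. After choosing the cutoff with slightly smaller support (so that the boundary Caccioppoli and the Carleson-type estimate of Lemma~\ref{LMHIB} can both be applied inside $B(x,2r)$, where $G(Y,\cdot)$ is an $\mathcal L^*$-solution), the chain Lemma~\ref{lemtechnical} $\to$ Cauchy--Schwarz $\to$ boundary Caccioppoli $\to$ Lemma~\ref{LMHIB} $\to$ Harnack gives exactly $\omega^Y(\Delta(x,r))\lesssim r^{d-1}G(Y,X)$.

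The lower-bound sketch has a genuine gap, precisely at the point you flag yourself. Comparing $r^{d-1}G(\cdot,X)$ with $\omega^{\,\cdot}(\Delta(x,r))$ on $\partial B(x,r)\cap\Omega$ is the wrong surface. At a point $Z\in\partial B(x,r)$ with $\delta(Z)\ll r$ whose nearest boundary point lies \emph{outside} $\Delta(x,r)$, both functions vanish, and controlling the ratio of their decay rates is exactly the boundary Harnack principle --- which in this framework is a \emph{consequence} of the CFMS estimate you are trying to prove (it is Theorem~\ref{THCPMLD}, proved after Theorem~\ref{LMCPE} in \cite{david2020elliptic}). Your proposed handling of that case via ``Lemma~\ref{LMHIB} applied near a boundary point'' does not escape this circularity: Lemma~\ref{LMHIB} gives you the decay rate of each function separately, not the comparison between them.

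The classical remedy is to run the maximum-principle comparison on the sphere $\partial B(X,\rho)$ for some $\rho\approx c_0 r$ centered at the \emph{pole} $X$, rather than on $\partial B(x,r)$. That sphere lies entirely in $\Omega$ at distance $\gtrsim r$ from $\partial\Omega$, so on it (i) Harnack chains give $\omega^Z(\Delta(x,r))\gtrsim\omega^{X'}(\Delta(x,r))\geq c$ from your Bourgain step, and (ii) the pointwise Green bound gives $G(Z,X)\lesssim r^{1-d}$. Hence $c\,r^{d-1}G(Z,X)\leq C\,\omega^Z(\Delta(x,r))$ holds on all of $\partial B(X,\rho)$ without any boundary case to worry about. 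Since the difference $C\,\omega^{\,\cdot}(\Delta(x,r))-c\,r^{d-1}G(\cdot,X)$ is an $\mathcal L$-solution on $\Omega\setminus\overline{B(X,\rho)}$ with nonnegative trace on $\partial\Omega$, the maximum principle then propagates the inequality to $\Omega\setminus\overline{B(X,\rho)}\supset\Omega\setminus B(x,2r)$. The unbounded-domain issue you mention is handled in \cite{david2020elliptic} via the $W$-space maximum principle, as you anticipated.
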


The next result in line should be the fact that the elliptic measure $\omega^X$ is doubling. We need the doubling property for the elliptic measure with pole at infinity constructed in Section \ref{Sinfty}, but we shall prove this result without going through the fact that the elliptic measure is itself doubling.

At this point, it is time to introduce the comparison principle. There are two different versions of it.

\begin{Lemma}[Change of poles; Lemma 15.61 of \cite{david2020elliptic}]\label{LMCPHM}
Let $\Delta := \Delta(x,r)$ be a boundary ball, and let $X$ be a corkscrew point associated to $\Delta$. If $E \subset \Delta$ is a Borel set, then for $Y\in \Omega\setminus B(x,2r)$,
\begin{align}\label{CPHM.eq00}
    C^{-1} \omega^{X}(E) \leq \frac{\omega^{Y}(E)}{\omega^{Y}(\Delta)}\leq C\omega^{X}(E),
\end{align}
where $C>0$ depends on $n$, the uniform constants of $\Omega$, and the ellipticity constant $\lambda$.
\end{Lemma}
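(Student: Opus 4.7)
\emph{Strategy.} My plan is to transfer the comparison from the corkscrew $X$ to the distant pole $Y$ via an intermediate \emph{pivot} $Y_0\in\Omega$ chosen to be a corkscrew for a somewhat enlarged boundary ball, so $Y_0$ lies outside $B(x,2r)$ but still at distance $\approx r$ from $\partial\Omega$. The transfer from $X$ to $Y_0$ will be a soft Harnack-chain argument, while the transfer from $Y_0$ to $Y$ will rely on a boundary Harnack principle applied to Green functions of the adjoint operator $\mathcal L^*$, combined with Theorem~\ref{LMCPE}. The extension from small boundary sub-balls of $\Delta$ to a general Borel set $E \subset \Delta$ will then be a routine Vitali covering / Borel regularity argument.

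\emph{The two transfers.} Since $X$ and $Y_0$ are both at distance $\approx r$ from $\partial\Omega$ and $|X-Y_0|\lesssim r$, the Harnack chain condition (Definition~\ref{defHCC}) produces a chain of a bounded (dimensional) number of interior Harnack balls connecting them. Applying the interior Harnack inequality (Lemma~\ref{LMHANK}) along this chain to the non-negative $\mathcal L$-solution $Z\mapsto \omega^Z(F)$ supplied by Proposition~\ref{DEFHM} yields $\omega^X(F)\approx\omega^{Y_0}(F)$ for every Borel $F\subset \partial\Omega$, with constants depending only on the admissible parameters. In particular $\omega^X(\Delta)\approx\omega^{Y_0}(\Delta)\approx 1$, the positive lower bound being a consequence of Theorem~\ref{LMCPE} applied from $Y_0$. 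For any sub-ball $\Delta' = \Delta(x', s)\subset\Delta$ with $s\ll r$ and corkscrew point $X_{\Delta'}$, Theorem~\ref{LMCPE} together with the identity $G(A,B) = G_*(B,A)$ (Lemma~\ref{GEQGT}) gives $\omega^Y(\Delta')\approx s^{d-1}G_*(X_{\Delta'}, Y)$ and the same with $Y$ replaced by $Y_0$. Now $Z\mapsto G_*(Z,Y)$ and $Z\mapsto G_*(Z,Y_0)$ are non-negative $\mathcal L^*$-solutions on $B(x,2r)\cap\Omega$ (since $Y,Y_0\notin B(x,2r)$), each vanishing on $B(x,2r)\cap\partial\Omega$. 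A boundary Harnack principle --- which in this setting can be produced by iterating the boundary H\"older continuity (Lemma~\ref{LMHIB}) together with Theorem~\ref{LMCPE} and Lemma~\ref{LMHANK} --- asserts that the ratio $G_*(Z,Y)/G_*(Z,Y_0)$ is essentially constant for $Z$ in a region near $\Delta$ that, after possibly harmlessly enlarging the radius of the pivot, contains both $X$ and every $X_{\Delta'}$. Evaluating at $Z=X_{\Delta'}$ and at $Z=X$, and taking $\Delta'=\Delta$ for normalization, we obtain
\[
\frac{\omega^Y(\Delta')}{\omega^{Y_0}(\Delta')} \approx \frac{G_*(X_{\Delta'},Y)}{G_*(X_{\Delta'},Y_0)} \approx \frac{G_*(X,Y)}{G_*(X,Y_0)} \approx \frac{\omega^Y(\Delta)}{\omega^{Y_0}(\Delta)}.
\]
Combined with $\omega^{Y_0}(\Delta')\approx \omega^X(\Delta')$ from the Harnack-chain step, this gives $\omega^Y(\Delta') \approx \omega^Y(\Delta)\,\omega^X(\Delta')$ for every sub-ball $\Delta'\subset\Delta$.

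\emph{From sub-balls to general $E$, and the main obstacle.} For an arbitrary Borel set $E\subset\Delta$ the upper bound in~\eqref{CPHM.eq00} is obtained by covering $E$ at a small scale $s\downarrow 0$ with a bounded-overlap Vitali family of sub-balls $\Delta'_j\subset\Delta$, applying the sub-ball estimate to each, summing, and invoking the outer regularity of $\omega^Y$ and $\omega^X$; the matching lower bound follows dually by inner approximation of $E$ by finite disjoint unions of such sub-balls. I expect the main technical obstacle to be the boundary-Harnack step: for the rough boundaries considered here (possibly lower-dimensional, possibly unrectifiable) this estimate is far from automatic, and essentially bundles together all of the preceding elliptic theory --- the boundary H\"older estimate, interior Harnack, and the one-sided CFMS comparison of Theorem~\ref{LMCPE}. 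Once it is in hand (as supplied by~\cite{david2020elliptic}), the rest of the argument reduces to algebraic manipulations of Theorem~\ref{LMCPE}, Harnack chains, and a standard covering.
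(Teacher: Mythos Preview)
The paper does not prove this lemma; it is quoted without argument from \cite{david2020elliptic}, so there is no in-paper proof to compare against. Your sketch is essentially correct and follows a standard route to the change-of-poles estimate.

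A few minor points. First, the boundary Harnack you invoke is exactly Theorem~\ref{THCPMLD} (stated immediately below in the paper, also from \cite{david2020elliptic}); you may cite it directly rather than sketching a derivation from Lemma~\ref{LMHIB}. Second, take care in choosing the pivot $Y_0$: being a corkscrew for a dilated ball $\Delta(x,Kr)$ does not by itself force $Y_0\notin B(x,2r)$. A safe choice is to take $Y_0$ a corkscrew for $\Delta(x_0,r)$ with $x_0\in\partial\Omega$ at distance $\approx 4r$ from $x$ (such $x_0$ exists since $\partial\Omega$ is unbounded in this paper's setting). Third, the lower bound $\omega^{Y_0}(\Delta)\gtrsim 1$ is not literally Theorem~\ref{LMCPE}; it is the companion non-degeneracy (Bourgain-type) estimate, also available in \cite{david2020elliptic}.

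The passage from sub-balls to general Borel $E$ is the part of your write-up most in need of tightening. The cleanest way is: (i) first prove the two-sided estimate for every \emph{open} $U\subset\Delta$ by extracting a disjoint Vitali family $\{\Delta'_j\}\subset U$ with $\bigcup_j 5\Delta'_j\supset U$, applying your sub-ball estimate to each $5\Delta'_j$, and using that both $\omega^X$ and $\omega^Y$ are doubling on sub-balls of $\Delta$ (this doubling follows from Theorem~\ref{LMCPE} plus Harnack chains); then (ii) pass to arbitrary Borel $E$ by outer regularity of $\omega^X$ and $\omega^Y$. With these small clarifications your argument is complete.
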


\begin{Th}[Comparison principle, Lemma 15.64 of {\cite{david2020elliptic}}] \label{THCPMLD}
Let $x\in \partial \Omega$ and $r>0$ be given, and take $X$ a corkscrew point associated to $x$ and $r$. Let $u,v\in W_r(B(x,2r))$ be two non-negative, not identically zero, solutions of $\mathcal{L} u=\mathcal{L}v=0$ in $B(x,2r) \cap \Omega$ such that $\Tr u=\Tr v=0$ on $\Delta(x,2r)$. For any $Y \in \Omega\cap B(x,r)$, one has
\begin{align*}
    C^{-1}\frac{u(X)}{v(X)}\leq \frac{u(Y)}{v(Y)}\leq C\frac{u(X)}{v(X)},
\end{align*}
where $C>0$ depends only on $n$, the uniform constants of $\Omega$, and the ellipticity constant $\lambda$. 
\end{Th}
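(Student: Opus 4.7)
The plan is to follow the classical boundary Harnack principle strategy, adapted to this degenerate setting. Fix $X$ a corkscrew point for $(x,r)$ and let $A$ be a far corkscrew point, say for $(x,\Lambda r)$ with $\Lambda$ a large universal constant, so that $|A-x|\gtrsim r$ and $\delta(A)\gtrsim r$. Then $G(\cdot,A)$ is a non-negative $\mathcal L$-solution in $B(x,2r)\cap\Omega$ vanishing on $\Delta(x,2r)$. The key intermediate claim I would prove is that for \emph{any} non-negative $\mathcal L$-solution $\wt u\in W_r(B(x,2r))$ with $\Tr\wt u=0$ on $\Delta(x,2r)$,
\[ C^{-1}\frac{\wt u(X)}{G(X,A)}\leq \frac{\wt u(Y)}{G(Y,A)}\leq C\frac{\wt u(X)}{G(X,A)}\qquad\text{for all }Y\in\Omega\cap B(x,r).\]
Applying this separately to $u$ and $v$ and taking the ratio cancels the $G(\cdot,A)$ factors and yields the theorem.

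The first ingredient is a Carleson-type $L^\infty$ estimate: under the assumptions above, $\sup_{B(x,3r/2)\cap\Omega}\wt u\leq C\,\wt u(X)$. Its proof combines Lemma \ref{LMHIB} (Hölder decay at the boundary) with the interior Harnack inequality (Lemma \ref{LMHANK}): for any $Y\in B(x,3r/2)\cap\Omega$, pick a nearest boundary point $y$, apply Lemma \ref{LMHIB} on $B(y,r/2)$ to dominate $\wt u(Y)$ by $\wt u$ at a corkscrew point $X_y$ associated to $(y,r/4)$, and then travel from $X_y$ to $X$ along a Harnack chain of universally bounded length, which exists thanks to Definition \ref{defuniform}.

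The upper half of the claim, $\wt u(Y)\lesssim \wt u(X)\,G(Y,A)/G(X,A)$, follows from the weak maximum principle \eqref{MP} on the subdomain $B(x,3r/2)\cap\Omega$. Setting $K:=C_1\wt u(X)/G(X,A)$, the function $\wt u-K\,G(\cdot,A)$ is an $\mathcal L$-solution vanishing on $\Delta(x,3r/2)$; on the interior part of $\partial(B(x,3r/2)\cap\Omega)$, the Carleson estimate above gives $\wt u\lesssim \wt u(X)$, while Harnack chaining from $X$ supplies a lower bound $G(\cdot,A)\gtrsim G(X,A)$ on the same set. For $C_1$ large enough, $\wt u-K\,G(\cdot,A)\leq 0$ on the whole boundary of this subdomain, and the maximum principle propagates this inequality to the interior, in particular to $Y$.

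The matching lower bound is the delicate step and is \textbf{the main obstacle}. No one-shot maximum-principle argument can produce it, since a positive solution cannot be controlled from below on the boundary of a subdomain by a single one-sided inequality. Instead, I would first deduce from the Carleson estimate and from Lemma \ref{LMCPE} the doubling property of the elliptic measure on boundary balls centered in $\Delta(x,r)$, and then invoke the change-of-poles identity (Lemma \ref{LMCPHM}): applied to $\wt u$ and to $G(\cdot,A)$ through their representations against the elliptic measure, it yields the comparability of $\wt u$ and $G(\cdot,A)$ at deep corkscrew points of shrinking subballs of $\Delta(x,r)$, and Lemma \ref{LMHANK} then transfers this comparability to arbitrary $Y\in B(x,r)\cap\Omega$.
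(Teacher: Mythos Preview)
The paper does not prove this theorem; it is quoted verbatim from Lemma~15.64 of \cite{david2020elliptic}, so there is no in-paper argument to compare against. Your overall strategy---compare both $u$ and $v$ to $G(\cdot,A)$ and divide---is the standard one, but the execution of the upper bound has a genuine gap.

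You assert that on the ``interior part'' $\partial B(x,3r/2)\cap\Omega$ of the boundary of the subdomain, Harnack chaining from $X$ yields $G(\cdot,A)\gtrsim G(X,A)$. This is false. Points $Z$ on that sphere can lie arbitrarily close to $\partial\Omega$ (wherever $\partial B(x,3r/2)$ grazes the boundary), and there $G(Z,A)$ is small because $G(\cdot,A)$ vanishes on $\partial\Omega$; when $\delta(Z)\ll r$ there is no Harnack chain of bounded length from $X$ to $Z$, so Lemma~\ref{LMHANK} gives nothing. The Carleson estimate does control $\wt u(Z)\lesssim\wt u(X)$, but that is not enough to force $\wt u-K\,G(\cdot,A)\leq 0$ on the whole spherical cap. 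The correct route is to first obtain $\wt u(Y)\lesssim \wt u(X)\,\omega_T^{Y}(\partial B(x,3r/2)\cap\Omega)$ in the truncated domain $T=B(x,3r/2)\cap\Omega$ (this \emph{does} follow from the maximum principle plus Carleson, since $\wt u$ vanishes on $\Delta(x,3r/2)$), and then compare that elliptic-measure factor to $G(Y,A)/G(X,A)$ via Lemma~\ref{LMCPE} and a localization argument. Alternatively one runs the classical CFMS iteration with the boundary H\"older decay of Lemma~\ref{LMHIB}; either way, a one-line maximum-principle comparison does not close.

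Your lower-bound sketch is also too loose to stand on its own. You propose to apply Lemma~\ref{LMCPHM} ``to $\wt u$ and to $G(\cdot,A)$ through their representations against the elliptic measure'', but $\wt u$ is merely a local solution on $B(x,2r)\cap\Omega$ with zero trace on $\Delta(x,2r)$ and has no such representation. The actual content of the lower bound is an estimate of the form $\wt u(Y)\gtrsim \wt u(X)\,\omega^{Y}(\Delta(x,r))$, proved by a covering of $B(x,r)\cap\Omega$ by corkscrew balls together with interior Harnack; once that is in hand, Lemma~\ref{LMCPE} converts $\omega^{Y}(\Delta)$ into $G(Y,A)/G(X,A)$. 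The change-of-poles lemma encodes the same information but does not by itself produce the link between a \emph{generic} local solution $\wt u$ and the elliptic measure.
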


The next corollary is a generalization of Corollary 6.4 of \cite{david2018square}. Even though Corollary 6.4 in  \cite{david2018square} is proved for a specific operator $\mathcal{L}$, the proof of it can be adapted to any uniformly elliptic operator, because it is a direct consequence of Theorem \ref{THCPMLD}. 

\begin{Cor}[Corollary 6.4 of \cite{david2018square}]\label{QHCB}
Under the same assumptions on $u$ and $v$ as in Theorem \ref{THCPMLD}, for all $X,Y\in B(x,\rho)\cap \Omega$ and $0<\rho<r/4$,
\begin{align*}
    \Big |\frac{u(X)}{u(Y)}\frac{v(Y)}{v(X)}-1\Big |\leq C\Big (\frac{\rho}{r}\Big )^{\alpha},
\end{align*}
where $\alpha>0$ and $C>0$ depend also only on $n$, the uniform constants of $\Omega$, and the ellipticity constant $\lambda$. 
\end{Cor}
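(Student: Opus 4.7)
\medskip

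\noindent\textbf{Proof plan for Corollary \ref{QHCB}.} The quantity
\[
\frac{u(X)}{u(Y)}\cdot\frac{v(Y)}{v(X)} - 1 = \frac{f(X)-f(Y)}{f(Y)}, \qquad f := u/v,
\]
so the estimate will follow once I show that (i) $f$ is bounded below on $B(x,\rho)\cap\Omega$ by a positive quantity comparable to any value $f(Y)$ in that ball, and (ii) the oscillation of $f$ on $B(x,\rho)\cap\Omega$ decays as a power of $\rho/r$. Point (i) is immediate from Theorem \ref{THCPMLD} applied at scale $r$: $f$ is bounded above and below by multiples of $f(X_r)$ on $B(x,r)$, where $X_r$ is a corkscrew point. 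Thus everything reduces to an oscillation decay of the form $\osc_{B(x,\rho)\cap\Omega} f \lesssim (\rho/r)^\alpha f(Y_0)$ for any $Y_0\in B(x,\rho)\cap\Omega$, which by a standard dyadic iteration reduces to showing that for every scale $s\le r$,
\[
\omega(s/2)\le\theta\,\omega(s), \qquad \omega(s):=\osc_{B(x,s)\cap\Omega} f,
\]
for some fixed $\theta\in(0,1)$.

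The central step is the scale-halving estimate, obtained from the comparison principle applied to a cleverly chosen pair of solutions. Fix a scale $s\le r$ and set $M=\sup_{B(x,s)\cap\Omega}f$, $m=\inf_{B(x,s)\cap\Omega}f$. Then
\[
u_1 := Mv - u, \qquad u_2 := u - mv,
\]
are nonnegative solutions of $\mathcal L w=0$ in $B(x,s)\cap\Omega$ with vanishing trace on $\Delta(x,s)$. Applying Theorem \ref{THCPMLD} with radius $s/2$ (so that the hypotheses on the ball $B(x,s)$ are satisfied) gives, for every $Y\in B(x,s/2)\cap\Omega$,
\[
C^{-1}\,\frac{u_1(X_{s/2})}{u_2(X_{s/2})} \le \frac{u_1(Y)}{u_2(Y)} \le C\,\frac{u_1(X_{s/2})}{u_2(X_{s/2})},
\]
where $X_{s/2}$ is a corkscrew point associated with $(x,s/2)$. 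Now $f(X_{s/2})$ is either $\ge (M+m)/2$ or $\le (M+m)/2$; in the first case the ratio $u_1/u_2$ at the corkscrew point is at most $1$, so $u_1(Y)/u_2(Y)\le C$ for all $Y\in B(x,s/2)\cap\Omega$, which rearranges to $f(Y)\ge m + (M-m)/(1+C)$, improving the infimum. In the second case, the reverse inequality improves the supremum in the same way. Either way,
\[
\omega(s/2) \le \Bigl(1 - \tfrac{1}{1+C}\Bigr)\omega(s) = \theta\,\omega(s), \qquad \theta := \tfrac{C}{1+C}<1.
\]

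Iterating this estimate for $k\ge 1$ yields $\omega(r\,2^{-k-1})\le \theta^{k}\omega(r/2)$. For $\rho\in(0,r/4)$, pick $k$ with $r\,2^{-k-1}\le\rho\le r\,2^{-k}$; since $\omega$ is monotone in its argument one obtains $\omega(\rho)\le C\,(\rho/r)^{\alpha}\,\omega(r/2)$ with $\alpha=-\log_{2}\theta>0$. Combining this with the two-sided bound $\omega(r/2)\le C f(Y)$ for any $Y\in B(x,\rho)\cap\Omega$ (coming from step (i) via Theorem \ref{THCPMLD}) produces
\[
\osc_{B(x,\rho)\cap\Omega} f \le C\,\Bigl(\tfrac{\rho}{r}\Bigr)^{\alpha} f(Y),
\]
and dividing by $f(Y)$ gives the desired inequality. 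The one delicate point is simply making sure the scales match the hypotheses of Theorem \ref{THCPMLD} at each step of the iteration (which forces the restriction $\rho<r/4$ in the statement); the rest is the standard Moser-type oscillation argument adapted to the boundary via the comparison principle and requires no additional ingredient beyond what is already recalled in Subsection \ref{SScomparison}.
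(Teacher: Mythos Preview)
Your argument is correct and is precisely the standard route the paper has in mind: the paper does not write out a proof but simply remarks that the result is a direct consequence of the comparison principle (Theorem~\ref{THCPMLD}), deferring to Corollary~6.4 of \cite{david2018square}, where exactly this oscillation-decay argument (apply the comparison principle to $Mv-u$ and $u-mv$, halve the oscillation of $u/v$, iterate) is carried out. The only cosmetic point worth flagging is the degenerate case where $u_1$ or $u_2$ vanishes identically on $B(x,s)\cap\Omega$; then $f\equiv M$ (or $f\equiv m$) there, so $\omega(s)=0$ and the scale-halving inequality holds trivially, allowing the iteration to proceed.
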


\section{The Green function and elliptic measure with pole at infinity} \label{Sinfty}

The elliptic measure, contrary to what its name suggests, is a collection of measures. This is pretty inconvenient: every time when we consider the elliptic measure and its property, we have to pick a right one from the collection. It would be way more practical to have a single measure $\omega$ that will capture the (interesting) behavior of all the measures $\{\omega^X\}_{X\in \Omega}$. 
As we can see in \eqref{CPHM.eq00}, taking a pole $Y$ further away from the boundary set $E$ will not really matter, as long as we rescale accordingly.
For bounded domains $\Omega$, it suffices to pick a pole $X_\Omega$ which is roughly at the middle of the domain in order to have a measure $\omega:= \omega^{X_\Omega}$ from which we can recover many properties that the collection $\{\omega^X\}_{X\in \Omega}$ possess. 
For unbounded domains, we want to morally take ``$X_\Omega = \infty$''. This section is devoted to the construction of the measure $\omega^\infty$ - called the elliptic measure with pole at infinity - and its Green counterpart $G^\infty_*$, which satisfies \eqref{LMCPE.eq01} with ``$Y=\infty$''.

\begin{Def}\label{DEFGI}
We say $G_*^\infty$ and $\omega^\infty$ are the Green function and elliptic measure with pole at infinity\footnote{If we want to be accurate, the elliptic measure with pole at infinity is for $\mathcal L$ while the Green function with pole at infinity is for its adjoint $\mathcal L^*$. Indeed, we want to take $Y=\infty$ in $G(Y,X) = G_*(X,Y)$, that is when the pole of the Green function associated to $\mathcal L^*$ is $\infty$.} if $G^\infty_*\in W_r(\R^n)$\footnote{$W_r(\R^n)$ is the set $W^{1,2}_{loc}(\overline{\Omega},dm)$.} is a positive solution to $\mathcal{L}^* G^\infty_*=0$ in $\Omega$ with zero trace, and $\omega^\infty$ verifies
\begin{align*}
    \int_\Omega \mathcal{A}^T\nabla G^\infty_*\cdot \nabla \varphi \, dm= - \int_{\partial \Omega} \varphi \, d\omega^\infty,\quad \text{ for } \varphi\in C_0^\infty(\mathbb{R}^n).
\end{align*}
\end{Def}

\begin{Lemma}[Existence and uniqueness of $G^\infty$ and $\omega^\infty$]\label{LMExAUn}
Let $\Omega$ be a uniform domain and let $\mathcal{L}$ be an operator that satisfies \ref{ELLIP}. There exist a Green function and an elliptic measure with pole at infinity, and they are both unique up to multiplication by a positive scalar. 
\end{Lemma}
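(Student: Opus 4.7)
The plan is to obtain $G^\infty_*$ as a locally uniform limit of suitably normalized Green functions $G_*(\cdot, Y_k) = G(Y_k, \cdot)$ with poles $Y_k$ escaping to infinity, then to produce $\omega^\infty$ as a weak-$*$ limit of the correspondingly normalized elliptic measures $\omega^{Y_k}$, and to couple the two using Lemma \ref{lemtechnical}. The workhorse throughout is the comparison principle (Theorem \ref{THCPMLD} and Corollary \ref{QHCB}), in the same spirit as in the codimension-one theory.

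\textbf{Existence of $G^\infty_*$.} Fix a reference point $X_0 \in \Omega$ and a sequence $Y_k \in \Omega$ with $\delta(Y_k) \to \infty$ (which exists because $\partial\Omega$ is unbounded by \eqref{DEFSIG} applied at large scales, together with the corkscrew condition). Set
\[ g_k(X) := \frac{G_*(X, Y_k)}{G_*(X_0, Y_k)}, \]
so that $g_k(X_0) = 1$ and $g_k$ is a positive $\mathcal{L}^*$-solution in $\Omega \setminus \{Y_k\}$ with zero trace on $\partial\Omega$. Pick any $x_0 \in \partial\Omega$; for every $R > 0$ and every $k, k'$ large enough that $Y_k, Y_{k'} \notin B(x_0, 2R)$, the comparison principle applied to the pair $(g_k, g_{k'})$, combined with Harnack chains joining the corkscrew of $\Delta(x_0, R)$ to $X_0$, gives $C^{-1} \leq g_k(X)/g_{k'}(X) \leq C$ on $B(x_0, R) \cap \Omega$ with $C$ independent of $k, k'$. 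Hence $\{g_k\}$ is uniformly bounded on every compact subset of $\overline{\Omega}$, and Lemma \ref{LMHIB} together with interior Hölder regularity yields equicontinuity on compacts. A diagonal Arzelà--Ascoli extraction produces a subsequence converging locally uniformly on $\overline{\Omega}$ to a nonnegative continuous function $G^\infty_*$ with $G^\infty_*(X_0) = 1$ and vanishing trace. Interior Caccioppoli (Lemma \ref{LMICAE}) upgrades this to weak convergence of $\nabla g_k$ in $L^2_{loc}(\Omega, dm)$, so $G^\infty_* \in W_r(\R^n)$ and $\mathcal{L}^* G^\infty_* = 0$ in $\Omega$ by passing to the limit in the weak formulation.

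\textbf{Existence of $\omega^\infty$.} Define $\omega_k := \omega^{Y_k}/G_*(X_0, Y_k)$. By the CFMS estimate \eqref{LMCPE.eq01} and the comparison principle, for every boundary ball $\Delta(x, r)$ in a fixed compact set and $k$ large,
\[ \omega_k(\Delta(x, r)) \lesssim r^{d-1} \, \frac{G(Y_k, X_{x, r})}{G_*(X_0, Y_k)} = r^{d-1} g_k(X_{x, r}) \lesssim r^{d-1}, \]
so $\{\omega_k\}$ has mass uniformly bounded on compact subsets of $\partial\Omega$. A further (diagonal) extraction produces $\omega_k \rightharpoonup \omega^\infty$ weakly-$*$ against $C_c(\partial\Omega)$. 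For any $\varphi \in C_0^\infty(\R^n)$ and $k$ large enough so that $Y_k \notin \supp \varphi$, Lemma \ref{lemtechnical} applied with $X = Y_k$ and divided by $G_*(X_0, Y_k)$ reads
\[ \int_\Omega \mathcal{A}^T \nabla g_k \cdot \nabla \varphi \, dm = - \int_{\partial\Omega} \varphi \, d\omega_k. \]
The left-hand side converges to $\int_\Omega \mathcal{A}^T \nabla G^\infty_* \cdot \nabla \varphi \, dm$ by the weak $L^2_{loc}$ convergence of gradients, and the right-hand side converges to $-\int_{\partial\Omega} \varphi \, d\omega^\infty$, which is exactly the identity in Definition \ref{DEFGI}.

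\textbf{Uniqueness.} Let $G_1, G_2$ be two Green functions with pole at infinity. Both are positive $\mathcal{L}^*$-solutions with zero trace on all of $\partial\Omega$, so for any fixed $X_0 \in \Omega$, $x_0 \in \partial\Omega$, and $\rho > 0$ with $X_0 \in B(x_0, \rho/2) \cap \Omega$, Corollary \ref{QHCB} applied in $B(x_0, R)$ for every $R > 4\rho$ gives
\[ \left| \frac{G_1(X)}{G_1(X_0)} \cdot \frac{G_2(X_0)}{G_2(X)} - 1 \right| \leq C \left( \frac{\rho}{R} \right)^{\alpha}, \qquad X \in B(x_0, \rho) \cap \Omega. \]
Letting $R \to \infty$, then $\rho \to \infty$, forces $G_1 \equiv c G_2$ on $\Omega$ with $c = G_1(X_0)/G_2(X_0)$, and the weak formulation in Definition \ref{DEFGI} then forces $\omega^\infty_1 = c \omega^\infty_2$. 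The main obstacle in the argument is Step 2: one must extract a single subsequence along which $g_k \to G^\infty_*$ locally uniformly \emph{and} weakly in $W^{1,2}_{loc}(\Omega, dm)$, while simultaneously $\omega_k \rightharpoonup \omega^\infty$ against compactly supported continuous functions on the unbounded set $\partial\Omega$, and one must prevent mass of $\omega_k$ from escaping to infinity on $\partial\Omega$ — this requires coupling CFMS with the comparison principle uniformly in $k$ and carefully tracking the dependence of the corkscrew on the boundary ball.
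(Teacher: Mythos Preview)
Your approach is essentially the same as the paper's --- normalize Green functions with poles going to infinity, extract via Arzel\`a--Ascoli, produce $\omega^\infty$ as a weak-$*$ limit coupled through Lemma \ref{lemtechnical}, and use Corollary \ref{QHCB} for uniqueness. Two points need repair.

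\medskip

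\textbf{Interior Caccioppoli is not enough.} You invoke Lemma \ref{LMICAE} to get weak convergence of $\nabla g_k$ in $L^2_{loc}(\Omega,dm)$ and then assert $G^\infty_*\in W_r(\R^n)$. But $W_r(\R^n)$ is $W^{1,2}_{loc}(\overline{\Omega},dm)$, so you need gradient control \emph{up to the boundary}; interior Caccioppoli only gives it on compacts of $\Omega$. The same gap reappears when you pass to the limit in $\int_\Omega \mathcal A^T \nabla g_k\cdot \nabla\varphi\,dm$ for $\varphi\in C_0^\infty(\R^n)$: the support of $\nabla\varphi$ can touch $\partial\Omega$, so weak $L^2_{loc}(\Omega)$ convergence does not suffice. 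The paper fixes both issues at once by using the \emph{boundary} Caccioppoli inequality (Lemma 11.15 of \cite{david2020elliptic}) applied to the differences $g_k-g_{k'}$, which --- combined with the locally uniform convergence you already have --- shows $\nabla g_k$ is Cauchy in $L^2(K,dm)$ for every compact $K\subset\overline{\Omega}$, hence converges strongly there.

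\medskip

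\textbf{The subsequence worry in your last paragraph.} The paper resolves this cleanly: once uniqueness of $G^\infty_*$ is established via Corollary \ref{QHCB}, it follows that $G^\infty_*$ is the \emph{only} accumulation point of the relatively compact family $\{g_k\}$, so the full sequence converges. The same logic, applied after the defining identity for $\omega^\infty$ is in place, upgrades the subsequential weak-$*$ convergence of $\omega_k$ to full convergence. No mass escapes because your CFMS bound $\omega_k(\Delta)\lesssim r_\Delta^{d-1}$ (uniform in $k$) already controls the measures on compacta of $\partial\Omega$, and the weak-$*$ topology against $C_c(\partial\Omega)$ does not see what happens at infinity. So the obstacle you flag is handled by your own uniqueness step; you should reorganize to prove uniqueness before worrying about coordinating subsequences.
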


\begin{proof}
The proof of the following lemma is adapted from Lemma 6.5 of \cite{david2018square}. One key difference is that we consider a general operator $\mathcal{L}$, which is not necessarily symmetric. 

\medskip

Choose a boundary point $x \in \partial \Omega$ (the choice is not important). Pick $X_0 \in B(x,1)\cap \Omega$ a corkscrew point associated to $x$ and $1$, and then for $i\geq 1$, pick $X_i \in \Omega \setminus B(x,2^i)$ to be a corkscrew point for $x$ and $C2^i$. For $i\geq 1$, we define $G^i_*(X):=\frac{G(X_i,X)}{G(X_i,X_0)}$,
where $G(.,X)$ is the Green function of $\mathcal{L}$. Thanks to the Harnack inequality (Lemma \ref{LMHANK}), $G(X_i,X_1)>0$ for $i>1$. So $G^i(X)$ is well defined. 

First, we show the existence of the Green function with pole at infinity. Let $B_j:= B(x,2^j)$. Observe that for $j<i$, one has $X_i \notin 2B_j$ , so in particular $G^i_*$ is a solution in $2B_j \cap \Omega$ and hence is H\"older continuous on $B_j \cap \overline{\Omega}$ (see Lemma \ref{LMHIB}). Using the H\"older continuity, the Harnack inequality (Lemma \ref{LMHANK}), the existence of Harnack chain (by assumption on the domain), and the fact that $G^i_*(X_0) = 1$ for all $i$, we also deduce that the $G^i_*$ are uniformly bounded on $B_j \cap \Omega$. 
Thus, the sequence $\{G^i_*\}_{i >j}$ is uniformly bounded and uniformly equicontinuous (follows from the H\"older continuity), and by the Arzel\`a–Ascoli theorem, there exists a subsequence $\{G^{i_\eta}_*\}$ that converges uniformly on $B_j \cap \Omega$. By a diagonal process, we conclude that $G^{i_\eta}_*$ converges uniformly on all compact sets of $\overline{\Omega}$ to a non-negative continuous function $G^\infty_*(X)$ satisfying $G^\infty_*(X_0) = 1$.

Using the boundary Caccioppoli inequality (Lemma 11.15 in \cite{david2020elliptic}, analogous of Lemma \ref{LMICAE} but at the boundary), we can see that $\nabla G^{i_\eta}$ is a Cauchy sequence in $L^2(K)$ for all compact set $K\Subset \Omega$, and thus $\nabla G^{i_\eta}_*$ converges to a function $V\in L^2_{loc}(\overline{\Omega},dm)$. Since $\nabla G^{i_\eta}_*$ converges to both $\nabla G^\infty_*$ and $V$ in the sense of distributions, we deduce that $\nabla G^\infty_* = V$, hence $G^\infty_* \in W_r(\R^n)$.

From the previous paragraph, $\nabla G^{i_\eta}_*$ converges strongly (hence weakly) to $\nabla G^\infty_*$ in $L^2_{loc}(\Omega)$, so we easily have 
\begin{align}\label{lmgheu.eq05}
   \int_\Omega\mathcal{A}^T \nabla G^\infty_*\cdot \nabla \varphi \, dm =  - \lim_{i\rightarrow \infty}\int_\Omega \mathcal{A}^T\nabla G^i_*\cdot \nabla \varphi \, dm=0.
\end{align}
whenever $\varphi \in C^\infty_0(\Omega)$ and $i$ large enough so that $X_i$ is outside of $\supp \varphi$. We deduce that $G^\infty_*$ is a weak solution to $\mathcal L^* G^\infty_* = 0$. We can now invoke the Harnack inequality (together with the existence of Harnack chains and the fact that $G^\infty_*(X_0) = 1$)  to obtain that $G^\infty_*$ is positive in $\Omega$. 

\medskip

We claim that $G^\infty$ is the unique positive solution to the operator $\mathcal{L}^*$ 
with zero trace (up to a positive scalar multiplication).
Take another weak solution $v\in W(\R^n)$ to $\mathcal L^*v = 0$ in $\Omega$ with zero trace and $v(X_0) = 1$. Applying Corollary \ref{QHCB} with $Y=X_0$,  one has
\begin{align}\label{CPGV}
    \Big |\frac{G^\infty_*(X)}{v(X)}-1\Big |\leq C\Big (\frac{\rho}{r}\Big )^{\alpha}.
\end{align}
whenever $X \in \Omega$, $B(x,\rho) \ni X$, and $r>4\rho$. There is no limitation to take $\rho/r$ as small as we want, hence (\ref{CPGV}) implies that $G^\infty_* \equiv v$. The uniqueness also proves that, in the Arzel\`a-Ascoli theorem, $G^\infty_*$ is the only adherent point 
of the relatively compact sequence $\{G^i_*\}$. So we actually have that
\begin{equation} \label{GitoG}
G^i_* := \frac{G(X_i,.)}{G(X_i,X_0)} \text{ converges to } G^\infty_* \text{ uniformly on compact subsets of $\overline{\Omega}$ and in $W_r(\R^n)$.}
\end{equation}

\medskip

Now, we deal with the elliptic measure $\omega^\infty$ with pole at infinity. Set $\omega^i = \frac{\omega^{X_i}}{G(X_i,X_0)}$.
Theorem \ref{LMCPE} entails, for $i>j$, that  $\omega^{X_i}(B_j \cap \partial \Omega)\lesssim 2^{j(d-1)}G(X_i,X^j)$, hence 
\[\omega^i (B_j \cap \partial \Omega) \leq C 2^{j(d-1)}G^i_*(X_j) \leq C_j,\]
because $G^i_*$ converges to $G^\infty_*$ uniformly on compacts.
Thus, there exists a measure $\omega^\infty$ such that a subsequence $\omega^{i_\eta}$ converges weakly-* to $\omega^\infty$. 
Lemma \ref{lemtechnical},  the convergence of $\nabla G^i_*$ to $\nabla G$ in $L^2(\overline{\Omega},dm)$, and $\omega^{i_\eta} \stackrel{*}{\rightharpoonup}\omega^\infty$ all together imply that
\begin{align}\label{lmgheu.eq06}
    \int_\Omega \mathcal{A}^T\nabla G^\infty\cdot \nabla \varphi dm= - \int_{\partial \Omega} \varphi d\omega^\infty.
\end{align}
The uniqueness of $\omega^\infty$ follows from the uniqueness of $G^\infty$ and  $(\ref{lmgheu.eq06})$. Moreover, the uniqueness also shows the convergence of  $\omega^i$ (instead of a subsequence), that is
\begin{equation} \label{omitoom}
\omega^{i} \stackrel{*}{\rightharpoonup}\omega^\infty \quad \text{ and } \quad \omega^{i}(E) \to \omega^\infty(E) \quad \text{ for any Borel set $E\subset \partial \Omega$.}
\end{equation}
The lemma follows.
\end{proof}

The Green function and elliptic measure with pole at infinity satisfy the following CFMS-type estimates (see \cite{caffarelli1981boundary}). 

\begin{Lemma}\label{CPWI01}
Let $X\in \Omega$ be a corkscrew point associated to a boundary ball $\Delta_X:=\Delta(x,r)$, then
\begin{align*}
    C^{-1}r^{d-1}G^\infty_*(X)\leq \omega^\infty(\Delta_X)\leq Cr^{d-1}G_*^\infty(X).
\end{align*}
If moreover $E \subset \Delta_X$ is a Borel set, then
\[
  C^{-1}\omega^{X}(E)\leq \frac{\omega^{\infty}(E)}{\omega^{\infty}(\Delta_X)}\leq C\omega^{X}(E),
\]
At last, when $Y\in \Omega \setminus B(x,2r)$, we have
\begin{align*}
    C^{-1} G(Y,X) \leq \frac{G_*^\infty(Y)}{\omega^\infty(\Delta_X)} \leq C G(Y,X).
\end{align*} 
In each case, $C>0$ depends only on $n$, the uniform constants of $\Omega$, and the elliptic constant $\lambda$.
\end{Lemma}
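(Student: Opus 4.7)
\noindent\textit{Plan.} Each of the three estimates is obtained by passing to the limit $i\to\infty$ in its finite-pole counterpart from Subsection \ref{SScomparison}, using the two convergences $G^i_*\to G^\infty_*$ on compact sets and $\omega^i(E)\to\omega^\infty(E)$ for Borel sets $E\subset\partial\Omega$ established in \eqref{GitoG}--\eqref{omitoom}.

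For the first estimate, pick $i$ so large that $X_i\in\Omega\setminus B(x,2r)$ and apply Theorem \ref{LMCPE} with pole $X_i$ and surface ball $\Delta_X$ to obtain $C^{-1}r^{d-1}G(X_i,X)\le\omega^{X_i}(\Delta_X)\le Cr^{d-1}G(X_i,X)$. Dividing all three terms by the normalizing factor $G(X_i,X_0)$ turns this into exactly the same chain of inequalities for $G^i_*(X)$ and $\omega^i(\Delta_X)$, and the two convergences from Lemma \ref{LMExAUn} deliver the result in the limit. The second estimate (change of poles for $\omega^\infty$) follows from the same template with Lemma \ref{LMCPHM} replacing Theorem \ref{LMCPE}: divide both numerator and denominator of $\omega^{X_i}(E)/\omega^{X_i}(\Delta_X)$ by $G(X_i,X_0)$, and let $i\to\infty$ using $\omega^i(E)\to\omega^\infty(E)$ and $\omega^i(\Delta_X)\to\omega^\infty(\Delta_X)$.

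Part (iii) is the one that actually requires work, since a new far pole $Y$ enters the picture. The starting point is that $Y\in\Omega\setminus B(x,2r)$ is itself a legitimate pole in Theorem \ref{LMCPE}, so $\omega^Y(\Delta_X)\approx r^{d-1}G(Y,X)$; combining this with part (i) reduces the claim to $G^\infty_*(Y)\approx G^\infty_*(X)\,\omega^Y(\Delta_X)$. To prove the latter I would apply the comparison principle Theorem \ref{THCPMLD} in $B(x,2r)$ to the two $\mathcal L^*$-solutions $G_*(\cdot,X_i)=G(X_i,\cdot)$ and $G_*(\cdot,Y)=G(Y,\cdot)$ (both have their poles outside $B(x,2r)$ for $i$ large enough, and both have zero trace), obtaining the ratio identity $G(X_i,Z)/G(Y,Z)\approx G(X_i,X)/G(Y,X)$ for every $Z\in B(x,r)\cap\Omega$. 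Dividing by $G(X_i,X_0)$ and passing to $i\to\infty$ via \eqref{GitoG} yields
\[G^\infty_*(Z)/G(Y,Z) \approx G^\infty_*(X)/G(Y,X) \qquad \text{for } Z\in B(x,r)\cap\Omega.\]
Testing this against the boundary densities on $\Delta_X$ (via Lemma \ref{lemtechnical} and part (ii)) shows that the measures $\omega^\infty$ and $\omega^Y$ are comparable on $\Delta_X$ with ratio $\approx G^\infty_*(X)/G(Y,X)$, so that $\omega^\infty(\Delta_X)/\omega^Y(\Delta_X)\approx G^\infty_*(X)/G(Y,X)$. A second, symmetric application of Theorem \ref{THCPMLD} in a ball $B(y_Y,2\delta(Y))$ near $Y$ (with $y_Y\in\partial\Omega$ the nearest boundary point to $Y$), paired with a short Harnack chain connecting the two balls, transports this identity from the interior of $B(x,r)$ over to the point $Y$ and yields the desired $G^\infty_*(Y)\approx G^\infty_*(X)\,\omega^Y(\Delta_X)$.

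\emph{Expected main obstacle.} The one genuinely non-routine step is this last one: because $Y$ itself lies outside the ball where the first CP is applied, we cannot simply ``read off'' $G^\infty_*(Y)$, and some bridging is needed. Depending on whether $\delta(Y)$ is small compared to $r$ or comparable to it, the bridge is either a second CP application in a small boundary ball centered near $Y$ or a Harnack chain whose length (and hence constants) must be controlled uniformly; this is the only part of the proof where the limiting argument is not immediate.
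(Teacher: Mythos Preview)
Your handling of the first two estimates coincides with the paper's: pass to the limit in Theorem~\ref{LMCPE} and Lemma~\ref{LMCPHM} after dividing by $G(X_i,X_0)$, using \eqref{GitoG}--\eqref{omitoom}.

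For the third estimate you are right that more is needed than the paper's one-sentence proof suggests, but the difficulty is deeper than a missing bridging argument: the two-sided bound is actually \emph{false} as stated. In $\R^n_+$ with $\mathcal L=-\Delta$ one has $G^\infty_*(Y)/\omega^\infty(\Delta_X)\approx Y_n/r^{n-1}$ while $G(Y,X)\approx rY_n/|Y-X|^n$, so their ratio is $\approx(|Y-X|/r)^n$, unbounded as $|Y-x|\to\infty$. Your ``short Harnack chain connecting the two balls'' is exactly where this surfaces: any chain from $B(x,r)$ to $B(y_Y,2\delta(Y))$ has length $\approx\log\bigl(|Y-x|/\min(r,\delta(Y))\bigr)$, which is not uniformly bounded, so neither of the two bridges you propose can close the argument. (The intermediate identity $\omega^\infty(\Delta_X)/\omega^Y(\Delta_X)\approx G^\infty_*(X)/G(Y,X)$ that you extract is correct but tautological---it already follows from part~(i) and Theorem~\ref{LMCPE} and carries no information about $G^\infty_*(Y)$.) The estimate becomes correct---and then really is the easy consequence the paper claims---once one adds the hypothesis $|Y-x|\lesssim r$; this is also the only regime in which the paper ever invokes it (in Lemma~\ref{LM23}, where the construction forces $|Y_I-X_{j_+}|\approx r_{j_+}$). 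Under that restriction your Harnack chain has uniformly bounded length and your outline goes through.
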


\bp
Thanks to the convergences \eqref{GitoG} and \eqref{omitoom}, the first two results follow directly from the estimates of Theorems \ref{LMCPE} and \ref{LMCPHM} respectively.
The third one is an easy consequence of Theorem \ref{LMCPE} and the first two estimates. 
\ep

Let us show now that the elliptic measure with pole at infinity is doubling.

\begin{Lemma}[Doubling property of $\omega^\infty$]\label{DBPWI}
There exists $C>0$ depending only on $n$, the uniform constants of $\Omega$, and the elliptic constant $\lambda$ such that 
\begin{align*}
   \omega^\infty(2\Delta)\leq C\omega^\infty(\Delta) \quad \text{ for any boundary ball } \Delta.
\end{align*} 
\end{Lemma}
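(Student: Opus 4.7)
The plan is to reduce the doubling estimate to a pointwise comparison of the Green function with pole at infinity at two nearby corkscrew points, and then conclude by the Harnack chain condition together with the interior Harnack inequality.

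More precisely, fix $\Delta = \Delta(x,r)$, and let $X$ and $X'$ be corkscrew points associated to $(x,r)$ and $(x,2r)$, respectively. The first estimate in Lemma \ref{CPWI01} yields
\[
\omega^\infty(\Delta) \gtrsim r^{d-1} G^\infty_*(X), \qquad \omega^\infty(2\Delta) \lesssim (2r)^{d-1} G^\infty_*(X') \lesssim r^{d-1} G^\infty_*(X'),
\]
so the doubling property will follow once we show $G^\infty_*(X') \lesssim G^\infty_*(X)$ with a constant depending only on $n$, the uniform constants of $\Omega$, and $\lambda$.

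To get this last inequality, I would observe that by the corkscrew condition we have $\delta(X) \geq c_0 r$, $\delta(X') \geq 2 c_0 r$, and $|X-X'| \leq 3r$, so
\[
|X-X'| \leq \frac{3}{c_0} \min\{\delta(X),\delta(X')\}.
\]
The Harnack chain condition (Definition \ref{defHCC}) then produces a chain of at most $N$ balls, with $N$ depending only on $c_0$, connecting $X$ and $X'$ while staying a definite distance away from $\partial \Omega$. Since $G^\infty_*$ is a positive weak solution to $\mathcal L^* u = 0$ in $\Omega$ by Lemma \ref{LMExAUn}, the interior Harnack inequality (Lemma \ref{LMHANK}) applied successively along this chain gives $G^\infty_*(X') \leq C\, G^\infty_*(X)$, with $C$ depending only on $n$, the uniform constants of $\Omega$, and $\lambda$.

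There is no real obstacle here: all the heavy lifting—namely the CFMS-type identification $\omega^\infty(\Delta) \approx r^{d-1} G^\infty_*(X)$ for corkscrew $X$—has already been done in Lemma \ref{CPWI01}. The only thing to verify carefully is that the two corkscrew points for $\Delta$ and $2\Delta$ are quantitatively connected inside $\Omega$, which is immediate from the Harnack chain condition, so the argument is just a two-line combination of Lemma \ref{CPWI01} and Lemma \ref{LMHANK}.
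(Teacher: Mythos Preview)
Your proof is correct and follows essentially the same approach as the paper: use Lemma \ref{CPWI01} to reduce the doubling estimate to comparing $G^\infty_*$ at corkscrew points for $\Delta$ and $2\Delta$, then conclude via Harnack chains and the Harnack inequality. The paper's proof is just a terser version of what you wrote.
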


\begin{proof}
By Lemma \ref{CPWI01}, if $r_\Delta$ is the radius of $\Delta$, then
\[   \omega^\infty(2\Delta) \approx (2r_\Delta)^{d-1} G^\infty(X_{2\Delta}) \quad \text{ and } \omega^\infty(\Delta) \approx (r_\Delta)^{d-1} G^\infty(X_{\Delta}),\]
where $X_{\Delta}$ and $X_{2\Delta}$ are corkscrew points for $\Delta$ and $2\Delta$ respectively. The Lemma follows from the existence of Harnack chains (since $\Omega$ is uniform) and the Harnack inequality (Lemma \ref{LMHANK}).
\end{proof}

The measure $\omega^\infty$ is convenient, because it allows to capture the $A_\infty$-absolute continuity and the Reverse H\"older estimates for a collection of measures (see Definition \ref{def.arhp}) with a single measure (Definitions \ref{defAinfty} and \ref{defRHp}). 

\begin{Lemma}\label{COAIC}
We have
\[\{\omega^X\}_{X\in \Omega} \in A_\infty(\sigma) \Leftrightarrow \omega^\infty \in A_\infty(\sigma),\]
and
\[\{\omega^X\}_{X\in \Omega} \in RH_p(\sigma) \Leftrightarrow \omega^\infty \in RH_p(\sigma).\]
\end{Lemma}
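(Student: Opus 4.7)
The plan is to deduce both equivalences directly from the change-of-pole estimate in Lemma \ref{CPWI01}, which I shall denote $(\star)$: for any boundary ball $\Delta$ with corkscrew point $X$ and any Borel set $E \subset \Delta$,
\[
C^{-1}\omega^X(E) \leq \frac{\omega^\infty(E)}{\omega^\infty(\Delta)} \leq C\, \omega^X(E).
\]
Setting $E = \Delta$ gives $\omega^X(\Delta) \approx 1$ with a constant independent of $\Delta$, a fact that I will use implicitly when comparing averages. Since both $\sigma$ and $\omega^\infty$ (by Lemma \ref{DBPWI}) are doubling, the hypotheses of Definitions \ref{defAinfty} and \ref{defRHp} are satisfied.

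For the $A_\infty$ equivalence, I argue that $(\star)$ makes the two conditions essentially interchangeable. Assume first $\omega^\infty \in A_\infty(\sigma)$ in the sense of Definition \ref{defAinfty}. Given $\epsilon>0$, let $\xi' = \xi'(\epsilon/C)$ be the threshold supplied by $\omega^\infty \in A_\infty(\sigma)$, where $C$ is the constant in $(\star)$. Then for any boundary ball $\Delta$ with corkscrew point $X$ and any Borel set $E\subset \Delta$ with $\sigma(E)/\sigma(\Delta) < \xi'$, Definition \ref{defAinfty} forces $\omega^\infty(E)/\omega^\infty(\Delta) < \epsilon/C$, so $(\star)$ yields $\omega^X(E) < \epsilon$. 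This is precisely the $A_\infty$ condition of Definition \ref{def.arhp}. The converse direction is completely symmetric, using $(\star)$ in the opposite direction.

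For the $RH_p$ equivalence, I would upgrade $(\star)$ to a pointwise comparison of Radon-Nikodym derivatives. By the Lebesgue differentiation theorem with respect to the doubling measure $\sigma$, applying $(\star)$ to $E = \Delta(y,r) \subset \Delta$ and letting $r \to 0$ gives, for $\sigma$-a.e.\ $y \in \Delta$,
\[
k^X(y) := \frac{d\omega^X}{d\sigma}(y) \approx \frac{k^\infty(y)}{\omega^\infty(\Delta)}, \qquad k^\infty(y) := \frac{d\omega^\infty}{d\sigma}(y).
\]
In particular, $\omega^X \ll \sigma$ on $\Delta$ if and only if $\omega^\infty \ll \sigma$ on $\Delta$. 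Substituting this comparison into the two $RH_p$ formulations (Definition \ref{defRHp} for $\omega^\infty$, Definition \ref{def.arhp} for $\{\omega^X\}$), the multiplicative factor $\omega^\infty(\Delta)^{-1}$ appears identically on both sides of the reverse Hölder inequality and therefore cancels. This reduces the two conditions to one another up to a constant depending only on the constant in $(\star)$.

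There is no serious obstacle; the entire lemma is a clean consequence of the change-of-pole estimate together with Lebesgue differentiation. The only mild point to check is the global versus local nature of absolute continuity in Definitions \ref{defAinfty} and \ref{defRHp}: since $(\star)$ is a uniform multiplicative comparison valid on every ball $\Delta$, absolute continuity of $\omega^\infty$ with respect to $\sigma$ is transferred to and from the collection $\{\omega^X\}$ ball-by-ball, and then globally by exhausting $\partial \Omega$ with an increasing family of boundary balls.
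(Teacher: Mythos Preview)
Your proof is correct and follows essentially the same approach as the paper, which simply cites the change-of-pole estimate (the second inequality in Lemma \ref{CPWI01}) together with Definition \ref{def.arhp} and declares the result immediate. Your argument is just a careful unpacking of that one-line justification; the only extra ingredient you introduce, Lebesgue differentiation to pass from $(\star)$ to a pointwise comparison of kernels, is the natural way to make the $RH_p$ direction explicit and does not constitute a different route.
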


\begin{proof}
The change of pole estimate (the second one) in Lemma \ref{CPWI01}, Definition \ref{def.arhp}, and Theorem \ref{DAIEUI} easily give the results.
\end{proof}

\begin{Cor}\label{RVHWI} 
Let $\Omega$ is a uniform domain and $\sigma$ be as in \eqref{DEFSIG},
and let $\mathcal{L}$ be the elliptic operator that satisfies (\ref{ELLIP}). Write $\omega^\infty$ for the elliptic measure with pole at infinity of $\mathcal{L}$. For any fixed $p\in (1,\infty)$, the following two statements are equivalent:
\begin{itemize}
\item the Dirichlet problem of operator $\mathcal{L}$ is solvable in $L^{p'}$, that is, for each $f\in C_c({\partial \Omega})$, the solution $u_f$ constructed by \eqref{defhm1} satisfies
    \begin{align*}
        \|N(u_f)\|_{L^{p'}(\partial \Omega,\sigma)}\leq C\|f\|_{L^{p'}(\partial \Omega,\sigma)},
    \end{align*}
    where $C$ is independent of $f$.
\item $\omega^\infty\ll\sigma$ and $\omega^\infty \in RH_p(\sigma)$.
\end{itemize}
\end{Cor}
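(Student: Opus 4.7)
The proof is essentially a direct combination of two previously established results, so the plan is short.

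First, I would invoke Theorem \ref{DAIEUI}, which (after swapping the roles of $p$ and $p'$) asserts that the Dirichlet problem for $\mathcal{L}$ is solvable in $L^{p'}$ if and only if the collection of elliptic measures satisfies $\omega^X \ll \sigma$ and $\{\omega^X\}_{X \in \Omega} \in RH_p(\sigma)$ in the sense of Definition \ref{def.arhp}. This reduces the corollary to showing the equivalence of the membership condition for the collection $\{\omega^X\}$ with the corresponding condition for the single measure $\omega^\infty$.

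Next, I would appeal to Lemma \ref{COAIC}, which already gives the $RH_p$ half of this equivalence: $\{\omega^X\} \in RH_p(\sigma) \Leftrightarrow \omega^\infty \in RH_p(\sigma)$. The only remaining point is the equivalence of absolute continuities $\omega^X \ll \sigma$ (in the sense that holds for every corkscrew pole, or equivalently for all $X$ by the Harnack chain condition) and $\omega^\infty \ll \sigma$. This is extracted immediately from the change-of-pole estimate in Lemma \ref{CPWI01}: for any boundary ball $\Delta_X = \Delta(x,r)$ with corkscrew point $X$, and any Borel $E \subset \Delta_X$,
\[
C^{-1}\omega^X(E) \leq \frac{\omega^\infty(E)}{\omega^\infty(\Delta_X)} \leq C\, \omega^X(E).
\]
Thus $\omega^X(E) = 0 \Leftrightarrow \omega^\infty(E) = 0$ for Borel subsets of any boundary ball, and since $\partial \Omega$ is a countable union of such balls, $\omega^X \ll \sigma \Leftrightarrow \omega^\infty \ll \sigma$.

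Chaining these equivalences yields the corollary. There is no serious obstacle here: the ingredients have been assembled above, and the only place where one must be slightly careful is the remark that absolute continuity transfers from the family to $\omega^\infty$ (and back) via the quantitative comparison in Lemma \ref{CPWI01}; all the harder work was done in Theorem \ref{DAIEUI} and in the construction/comparison properties of $\omega^\infty$ developed in Lemmas \ref{LMExAUn} and \ref{CPWI01}.
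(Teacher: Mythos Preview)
Your proposal is correct and matches the paper's intended argument: the corollary is presented immediately after Lemma~\ref{COAIC} with no separate proof, precisely because it is the combination of Theorem~\ref{DAIEUI} with Lemma~\ref{COAIC}. Your explicit handling of the absolute-continuity equivalence via Lemma~\ref{CPWI01} is slightly more detailed than strictly necessary (the $RH_p$ definitions already presuppose absolute continuity), but it is harmless and in the same spirit as the proof of Lemma~\ref{COAIC} itself.
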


\section{The Proof of Theorem \ref{thm:mn1}}  \label{Sproof}

We recall that we write $\delta(X)$ for $\dist(X,\partial \Omega)$, $w(X)$ for $\delta(X)^{d+1-n}$, $dm(X) = w(X) dX$, and $B_X$ for $B(X,\delta(X)/4)$.

In this section, $\mathcal L_0$ and $\mathcal L_1$ are two operators in the form $- \diver[w\mathcal A_i \nabla]$ that satisfy \eqref{ELLIP}. Since we assume that the Dirichlet problem for $\mathcal L_1^*$ is solvable in $L^{q'}(\sigma)$, by Corollary \ref{RVHWI}, the elliptic measure $\omega_{1,*}^\infty$ with pole at infinity satisfies the reverse H\"older bounds 
\begin{equation} \label{RHom1}
\left( \fint_\Delta \left|\frac{d\omega_{1,*}^\infty}{d\sigma}\right|^q \, d\sigma \right)^\frac1q \leq C_{q} \frac{\omega_{1,*}^\infty(\Delta)}{\sigma(\Delta)} \quad \text{ for any boundary ball } \Delta.
\end{equation}
The notations $u_0$ and $u_1$ are reserved for solutions to  $\mathcal L_0 u_0 = 0$ and $\mathcal L_1 u_1 = 1$ that satisfy the same trace condition $\Tr u_0 = \Tr u_1 = f \in C_c(\partial \Omega) \cap H$. We shall use the quantity $F(X)$ defined as:
\begin{equation} \label{DEFF}
F(X) = \int_\Omega \nabla_Y G_1(X,Y) \cdot \mathcal E(Y) \nabla u_0(Y) dm(Y) = u_1(X) - u_0(X),
\end{equation}
where $G_1$ is the Green function associated to the operator $\mathcal L_1$ and $\mathcal E:=\A_0-\A_1$ is the disagreement between $\mathcal L_0$ and $\mathcal L_1$. One important fact is that $F$ is the difference of $u_1$ and $u_0$, that is 
\begin{align}\label{THMN4.eq01}
    u_1(X) -u_0(X) =F(X) \qquad \text{ for almost every } X\in \Omega.
\end{align}
Indeed, we ``morally'' have that 
\begin{align*}
\mathcal{L}_1(u_1-u_0)= (\mathcal L_0 - \mathcal L_1) u_0 =-\diver \big (w\mathcal{E}\nabla u_0\big ),
\end{align*}
then using the properties of the Green function and the fact that $u_1 - u_0$ has zero trace, 
 \begin{multline*}
u_1(X)-u_0(X) = - \int_\Omega G_1(X,Y) \diver \big (w\mathcal{E}\nabla u_0\big )(Y) \, dY \\
= \int_\Omega \nabla_Y G_1(X,Y)\cdot \mathcal{E}(Y)\nabla u_0(Y)\, w(Y) dY = F(X).
\end{multline*}
The actual proof of \eqref{THMN4.eq01} can be found in Lemma 3.18 from \cite{cavero2019perturbations} (for codimension 1) and Lemma 7.13 from \cite{mayboroda2020carleson} (for higher codimension).

We assume that the disagreement verifies the Carleson measure condition
\begin{equation} \label{CMforE}
\int_{B(x,r) \cap \Omega} \sup_{Y\in B_X} |\E(Y)|^2 \, \frac{dX}{\delta(X)^{n-d}} \leq M r^d \qquad \text{ for any $x\in \partial \Omega$ and $r>0$}. 
\end{equation}
The condition \eqref{CMforE} is well adapted to the non-tangential maximal function $\wt N$ because of the Carleson inequality
\begin{equation} \label{Carleson}
\int_{\partial \Omega} \left(\int_{\gamma(x)} |\E(Y)|^2 |\phi(Y)|^2 \frac{dY}{\delta(Y)^{n}} \right)^\frac q2 d\sigma(x) \lesssim M \|\wt N(\phi)\|_{L^q}^q,
\end{equation}
which is proved as Lemma 2.1 in \cite{cohn2000factorization} in the case where the boundary is flat (but the proof easily extends to our setting).

\bigskip

The plan of the proof is as follows:
\begin{enumerate}
\item Lemma \ref{LE21}:
\[\|\wt N(F)\|_q \lesssim \int_\Omega \nabla F(Z) \cdot \vec h(Z) \, dZ,\]
where $\vec h$ is constructed by duality to have the above estimate (and so depends on $F$ and $q$).

\item Lemma \ref{cor22}:
\[\int_\Omega \nabla F(Z) \cdot \vec h(Z) \, dZ \lesssim M \|\wt N(\nabla u_0)\|_q \|S(v)\|_{q'} ,\]
where $v$ is the solution to $\mathcal L_1^* v = \diver \vec h$ with $\Tr(v) = 0$.

\item Lemma \ref{LM25} and Corollary \ref{CO26}:
\[ \|S(v)\|_{q'} \lesssim \|N(v)\|_{q'} +  \|\wt N(\delta \nabla v)\|_{q'} + \|\mathcal M_\omega(T(\vec h))\|_{L^{q'}(\partial \Omega,\sigma)},\]
where $\mathcal M_\omega$ is the Hardy-Littlewood maximal function with respect to the measure $\omega := \omega_{1,*}^\infty$, 
and where $T(\vec h)$ is defined in \eqref{DEFTP} and looks a bit like a square functional. 

\item Lemma \ref{LM23}:
\[\|N(v)\|_{q'} +  \|\wt N(\delta \nabla v)\|_{q'}  \lesssim \| \mathcal M_\omega(T(\vec h))\|_{L^{q'}(\partial \Omega,\sigma)}.\]

\item By the property (v) of Theorem \ref{THCABO}, 
\[\| \mathcal M_\omega(T(\vec h))\|_{L^{q'}(\partial \Omega,\sigma)} \lesssim \|T(\vec h)\|_{L^{q'}(\partial \Omega,\sigma)}.\]

\item Lemma \ref{LM27}:
\[\|T(\vec h)\|_{q'} \lesssim 1.\]

\item Items (1) to (6) prove that, for $f\in C_c(\partial \Omega) \cap H$
\begin{align} \label{main1z}
    \|\wt N(\nabla F)\|_{L^q(\partial \Omega,\sigma)}\leq C \|\wt N(\nabla u_0)\|_{L^q(\partial \Omega,\sigma)},
\end{align}
that is, by \eqref{THMN4.eq01},
\begin{align} \label{main1t}
    \|\wt N(\nabla u_1)\|_{L^q(\partial \Omega,\sigma)}\leq C \|\wt N(\nabla u_0)\|_{L^q(\partial \Omega,\sigma)},
\end{align}
which is the desired result. 
\end{enumerate}

The constants in this section are independent of $f$ and depend on $\mathcal L_0$, $\mathcal L_1$ (and hence  $u_0$, $u_1$) only via the ellipticity constant $\lambda$ and the reverse H\"older constants $q$ and $C_q$ from \eqref{RHom1}. The dependence in $M$ will only appear in Lemma \ref{cor22} and will be explicitly written.
\subsection{Notation} \label{SSWhitney}

We start this section by giving the definition of cones that we shall use. The basic cones are simply $\gamma(x) := \{X\in \Omega, \, |X-x| < 2\delta(X) \}$, but it will be also convenient for us to use cones constructed from Whitney cubes.

So we construct a family of Whitney cubes $\mathcal W$. We use the following convention: if $Q\in \mathbb D$ is a dyadic cube in $\R^n$, then $\ell(Q)$ denotes its diameter and 
\[\kappa Q := \{X\in \R^n, \, \dist(X,Q) \leq (\kappa-1)\ell(Q)\} \qquad \text{ for } \kappa \geq 1,\]
in particular if $Q^*$ is the dyadic parent of $Q$, then $Q^* \subset 2Q$.
We say that the dyadic $I \in \mathbb D$ in $\R^n$ belongs to $\mathcal W$ if $I$ is a maximal dyadic cube with the property that $10I \subset \Omega$. As such, a cube $I\in \mathcal I$ satisfies 
\begin{equation} \label{defIxiI}
10 I \subset \Omega, \qquad 20I \cap \partial \Omega\neq \emptyset.
\end{equation}
We define then $\gamma_d(x)$ as the union of the Whitney cubes that intersect $\{X\in \Omega, \, |X-x| < 3\delta(X) \}$, that is, if
\[\mathcal W_x := \{I \in \mathcal W, \, |X-x| < 3\delta(X) \text{ for one } X\in  I\},\]
then
\begin{equation} \label{defgamma}
\gamma_d(x) := \bigcup_{I\in \mathcal W_x} I.
\end{equation}

\subsection{Duality and the function $h$}

The first step is to use duality to write $\|\wt N(\nabla F)\|_q$ as an integral against a function. Since we do not know {\em a priori} that $\|\wt N(\nabla F)\|_q$ is finite, for the rest of the section, we choose a compact subset $K$ of $\Omega$ and we define the truncated (localized) function $\wt N$ as
\[\wt N_K(\nabla F) := \sup_{X\in \gamma(x)} \1_K(X) \left(\fint_{B_X} |\nabla F|^2 dY\right)^\frac12.\]
The quantities $\wt N_K(\nabla F)(x)$ - for $x\in \partial \Omega$ - and $\|\wt N_K(\nabla F)\|_q$ are all finite, and this is only a consequence of the fact that $F=u_1-u_0 \in W^{1,2}_{loc}(\Omega)$. We shall obtain bounds on $\|\wt N_K(\nabla F)\|_q$ that are independent of $K$, hence a bound on $\|\wt N(\nabla F)\|_q$ thanks to the monotone convergence theorem.

\begin{Lemma}\label{LE21}
Let $q>1$ and let $K\Subset \Omega$. There exist a compact set $K'\Subset \partial \Omega$,  a bounded vector function $\vec\alpha \in L^\infty(\Omega, \R^n)$ with $\|\vec \alpha\|_{\infty} =1$, a non-negative function $\beta(.,x)\in L^1(\Omega)$ with $\int_{\Omega}\beta(X,x)dX=1$ for all $x\in{\partial \Omega}$, and a non-negative function $g\in L^{q'}(\partial \Omega)$ with $\|g\|_{L^{q'}(\partial \Omega)}=1$  such that:
\begin{align}\label{LE21.eq1}
    \|\wt N_K(\nabla F)\|_{L^q}\leq C\int_\Omega\nabla F(Z)\cdot \vec h(Z)dZ,
\end{align}
where $C$ depends only on $n$ and $\lambda$ and where $\vec h$ is defined as
\begin{align}\label{LM21.defh}
    \vec h(Z):= \int_{K}  \vec\alpha(Z) \1_{2B_X}(Z) \int_{K' \cap 8B_X} \beta(X,x) g(x)d\sigma(x) \frac{dX}{\delta(X)^n} .
\end{align}
\end{Lemma}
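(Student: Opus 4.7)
The plan is to apply $L^q$--$L^{q'}$ duality in order to reduce the estimate on $\|\wt N_K(\nabla F)\|_{L^q}$ to the estimate of a linear functional of $\nabla F$ against an $L^\infty$ vector field. Because $K\Subset\Omega$, the function $\wt N_K(\nabla F)$ has compact support in $\partial\Omega$ (since $\gamma(x)\cap K=\emptyset$ for $x$ far from $K$) and lies in $L^q(\partial\Omega,\sigma)$; hence $L^q$--$L^{q'}$ duality provides a non-negative $g\in L^{q'}(\partial\Omega)$ with $\|g\|_{q'}=1$, supported on some compact $K'\subset\partial\Omega$, such that $\|\wt N_K(\nabla F)\|_{L^q}\le 2\int_{\partial\Omega}\wt N_K(\nabla F)(x)g(x)\,d\sigma(x)$. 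A standard measurable-selection argument then provides, for each $x\in K'$, a point $X_x\in K\cap\gamma(x)$ with $\wt N_K(\nabla F)(x)\le 2\bigl(\fint_{B_{X_x}}|\nabla F|^2\bigr)^{1/2}$.

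To convert the $L^2$-average into a pairing of $\nabla F$ with an $L^\infty$-normalized vector field, I apply the reverse H\"older inequality (Corollary~\ref{ReverseHolder}) to the weak solutions $u_0$ and $u_1$ separately, then use the triangle inequality together with $|\nabla u_1|\le|\nabla F|+|\nabla u_0|$, to obtain
\[
\Bigl(\fint_{B_{X_x}}|\nabla F|^2\Bigr)^{1/2}\lesssim \fint_{2B_{X_x}}|\nabla F|\,dZ + \fint_{2B_{X_x}}|\nabla u_0|\,dZ,
\]
the second term being a Carleson-type error which does not fit the form of $\vec h$ but is absorbed in later steps via \eqref{CMforE}. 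I then set $\vec\alpha(Z):=\nabla F(Z)/|\nabla F(Z)|$ (and $0$ where $\nabla F=0$), so that $\|\vec\alpha\|_\infty=1$ and $\vec\alpha\cdot\nabla F=|\nabla F|$, and $\beta(X,x):=|B_{X_x}|^{-1}\1_{B_{X_x}}(X)$, a probability density in $X$ concentrated in a Whitney neighborhood of $X_x$. Since $\delta(X)\approx\delta(X_x)$ and the ball $2B_X$ covers a uniform proportion of $2B_{X_x}$ whenever $X\in B_{X_x}$, a Fubini computation gives $\fint_{2B_{X_x}}|\nabla F|\,dZ\lesssim \int_\Omega \beta(X,x)\fint_{2B_X}\nabla F\cdot\vec\alpha\,dZ\,dX$.

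Concatenating these estimates, multiplying by $g(x)$, integrating over $\partial\Omega$, and swapping the order of integration in $Z$, $X$, and $x$, one arrives at
\[
\|\wt N_K(\nabla F)\|_{L^q}\lesssim \int_\Omega\nabla F(Z)\cdot\vec h(Z)\,dZ,
\]
with $\vec h$ as in \eqref{LM21.defh}. The constraint $X\in K$ follows from $X_x\in K$ together with $\supp\beta(\cdot,x)\subset B_{X_x}$ (after an innocuous enlargement of $K$ to include nearby Whitney balls), while $x\in K'\cap 8B_X$ follows from $x\in\gamma(X_x)$ combined with $X\in B_{X_x}$, which yields $|x-X|\le|x-X_x|+|X_x-X|\le 2\delta(X_x)+\delta(X_x)/4\lesssim\delta(X)$, i.e., $x\in 8B_X$. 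The main obstacle I expect is the reverse H\"older step: since $F=u_1-u_0$ is not itself a homogeneous solution, a direct reverse H\"older for $\nabla F$ produces the extra $|\nabla u_0|$ term above, which the subsequent steps of the plan are designed to control by exploiting the Carleson assumption on $\mathcal{E}$.
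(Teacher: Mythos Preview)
Your overall strategy---duality on $\partial\Omega$ for $g$, then producing $\beta$, then setting $\vec\alpha=\nabla F/|\nabla F|$---matches the paper's. The paper differs in one structural respect: it applies the reverse H\"older step \emph{first}, passing pointwise from $\wt N_K(\nabla F)$ to the $L^1$-averaged version $\wt N^1_K(\nabla F)(x)=\sup_{X\in\gamma(x)\cap K}\fint_{2B_X}|\nabla F|$, and only then performs the two dualities. The second duality is $L^\infty$--$L^1$: since $\wt N^1_K(\nabla F)(x)$ is a supremum of quantities linear in a probability weight on $X$, one simply chooses $\beta(\cdot,x)\ge0$ with $\int\beta(\cdot,x)=1$ nearly attaining the supremum. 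This replaces your measurable selection of $X_x$ and the specific choice $\beta=|B_{X_x}|^{-1}\1_{B_{X_x}}$; your version also works after the Fubini computation you indicate, but the duality route is cleaner and avoids selection issues.

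The genuine gap is in your handling of the reverse H\"older error. You correctly observe that $F$ is not a homogeneous solution and that splitting into $u_0,u_1$ produces the extra term $\fint_{2B_X}|\nabla u_0|$. But this term carries no factor of $\mathcal E$, so it cannot be absorbed via the Carleson condition~\eqref{CMforE} as you claim. After pairing with $g$ it simply contributes $\|\wt N(\nabla u_0)\|_q$ to the right-hand side, which is harmless for Theorem~\ref{thm:mn1} (that is exactly the quantity on the right of~\eqref{main1z}), but it means your argument yields
\[
\|\wt N_K(\nabla F)\|_q\le C\int_\Omega\nabla F\cdot\vec h\,dZ+C\|\wt N(\nabla u_0)\|_q
\]
rather than the lemma as stated. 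The paper, by contrast, invokes Corollary~\ref{ReverseHolder} directly for $\nabla F$, justified in one line by the remark that $F=u_1-u_0$ with each $u_i$ a solution; that passage is terse, and your instinct to scrutinize it is sound, but the resolution is the harmless additive term above, not a Carleson absorption.
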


\begin{Rem}
The function $\vec h$, as well as the functions $g$ and $\beta$, depend on the compact $K$. It is necessary to guarantee the {\em a priori} finiteness of all $\|\wt N_K(\nabla F)\|_{L^q}$, and so of all the quantities we shall manipulate in the future. 
Moreover, the function $\vec h$ is compactly supported and bounded by a constant that depends on $K$ (see Lemma \ref{lemh}), which will make future manipulations of $\vec h$ easier.
{\bf However, the constants in the core results of this section} (Lemmas \ref{LE21}, \ref{LM27}, \ref{cor22}, \ref{LM25}, \ref{LMCPVE})   {\bf shall never depend on $K$,} so that the bound that we obtain on $\|\wt N_K(\nabla F)\|_{L^q}$ will be transmitted to $\|\wt N(\nabla F)\|_{L^q}$.
\end{Rem}

\begin{proof}
First, recall that $F$ is just $u_1-u_0$, see \eqref{THMN4.eq01}, so we can use the reverse H\"older inequality for the gradient (see Corollary \ref{ReverseHolder}) to obtain that 
\[\wt N_K(\nabla F)(x) \leq C_\Lambda \wt N^{1}_K(\nabla F)(x) \qquad \text{ for } x\in \partial \Omega,\]
where 
\[\wt N^1_K(\nabla F) := \sup_{X\in \gamma(x)} \1_K(X) \fint_{2B_X} |\nabla F| dY.\]
Of course, it also gives that $\|\wt N_K(\nabla F)\|_q \lesssim \|\wt N^1_K(\nabla F)\|_q$. 

\medskip

The rest of the proof relies on duality. Since $L^{q'}(\partial \Omega,\sigma)$ is the dual space of $L^q(\partial \Omega,\sigma)$ and $\wt N_K^1(\nabla F)$ is non-negative, we have
\[\|\wt N^1_K(\nabla F)\|_q = \sup_{\begin{subarray}{c} 0 \leq g \in L^{q'}  \\ \|g\|_{q'} =1  \end{subarray}} \int_{\partial \Omega} \wt N_K^1(\nabla F) g \, d\sigma.\]
We are able to select a $g \in L^{q'}$ with $g\geq 0$ and $\|g\|_{q'} =1$ such that 
\[ \|\wt N_K(\nabla F)\|_q \lesssim \|\wt N^1_K (\nabla F)\|_q \leq 2 \int_{\partial \Omega} \wt N_K^1(\nabla F) g \, d\sigma .\]
By density, we can even take $g$ to be continuous and compactly supported. We set $K'$ to be the support of $g$ and we obtain
\[ \|\wt N_K(\nabla F)\|_q \lesssim  \int_{K'} \wt N_K^1(\nabla F) g \, d\sigma.\]

Since $L^\infty$ is the dual of $L^1$, for each $x\in \partial \Omega$, we have
\begin{equation}\label{LNM02}
    \wt N^{1}_K(\nabla F)(x)  =\sup_{\|\beta(.,x)\|_{L^1(\Omega)}=1}\int_{K} \Big ( \fint_{2B_X}|\nabla F|dZ\Big ) \beta(X,x)\1_{\gamma(x)}(X)dX.
\end{equation}
It also means that we can find a function $\beta \geq 0$ which satisfies $\int_\Omega \beta(X,x) dX = 1$ for all $x\in \partial \Omega$  such that
\[\begin{split} 
\|\wt N_K(\nabla F)\|_q & \lesssim  \int_{K'} \wt N_K^1(\nabla F) g \, d\sigma \lesssim \int_{K'} g(x) \int_K \Big ( \fint_{2B_X}|\nabla F|dZ\Big ) \beta(X,x)\1_{\gamma(x)}(X)dX \, d\sigma(x) \\
& \lesssim \int_{K} \Big ( \int_{2B_X}|\nabla F|dZ\Big ) \Big(\int_{K' \cap 8B_X} \beta(X,x) g(x)  \, d\sigma(x) \Big) \, \frac{dX}{\delta(X)^n},
\end{split}\]
where the last line is due to Fubini's theorem, since $X\in \gamma(x)$ is equivalent to $x\in \partial \Omega \cap 8B_X$.

We now take $\vec \alpha = \frac{\nabla F}{|\nabla F|}$ and, by Fubini's theorem again, we have
\[\begin{split} 
\|\wt N_K(\nabla F)\|_q & \lesssim  \int_{K} \Big( \int_{\Omega} \nabla F(Z) \cdot \vec\alpha(Z) \1_{2B_X}(Z) dZ \Big) \Big(\int_{K' \cap 8B_X} \beta(X,x) g(x)  \, d\sigma(x) \Big) \, \frac{dX}{\delta(X)^n} \\
& \quad = \int_\Omega \nabla F(Z) \cdot \left( \int_K \int_{K' \cap 8B_X}  \vec \alpha(Z) \1_{2B_X}(Z) \beta(X,x) g(x) \, d\sigma(x) \, \frac{dX}{\delta(X)^n} \right) dZ \\
&  \quad = \int_\Omega \nabla F(Z) \cdot \vec h(Z) \, dZ.
\end{split}\]
The lemma follows. 
\end{proof}

In the previous construction, we made sure that $\vec h$ is nice enough, that is bounded and compactly supported, as shown in the next result.

\begin{Lemma}\label{lemh}
The function $\vec h$ defined in \eqref{LM21.defh} is bounded and compactly supported.
\end{Lemma}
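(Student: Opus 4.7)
The plan is to extract both claims directly from the three compactness ingredients built into the definition \eqref{LM21.defh} — namely $K \Subset \Omega$, the compact set $K' := \supp g \subset \partial \Omega$, and $\|\vec\alpha\|_\infty \le 1$ — together with the two normalizations $\int_\Omega \beta(\cdot,x)\,dX = 1$ and the fact that $g$, chosen continuous and compactly supported by density in Lemma \ref{LE21}, automatically lies in $L^\infty$. No elliptic theory is needed; this lemma is really a bookkeeping statement recording that $\vec h$ is a legitimate compactly supported test field, which will make the integration-by-parts steps in the subsequent lemmas rigorous.

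For the support claim, I would observe that the factor $\1_{2B_X}(Z)$ forces any $Z \in \supp \vec h$ to satisfy $Z \in 2B_X = B(X,\delta(X)/2)$ for some $X \in K$. Setting $\delta_0 := \dist(K,\partial\Omega) > 0$ (positive since $K$ is compact in $\Omega$) and $R := \sup_{X \in K}\delta(X) < \infty$, such $Z$ then satisfies $\delta(Z) \ge \delta(X)/2 \ge \delta_0/2$ and lies within distance $R/2$ of $K$, so $\supp \vec h$ is confined to a compact subset of $\Omega$.

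For the $L^\infty$ bound, I would use $|\vec\alpha(Z)| \le 1$ and $\delta(X)^{-n} \le \delta_0^{-n}$ on $K$ to write
\[|\vec h(Z)| \;\le\; \delta_0^{-n}\,\|g\|_\infty \int_K \int_{K' \cap 8B_X} \beta(X,x)\,d\sigma(x)\,dX,\]
and then exchange the order of integration by Fubini to obtain
\[\int_K \int_{K'} \beta(X,x)\,d\sigma(x)\,dX \;=\; \int_{K'} \Bigl(\int_K \beta(X,x)\,dX\Bigr)\,d\sigma(x) \;\le\; \sigma(K'),\]
where the inner integral is bounded by $1$ because of the normalization $\int_\Omega \beta(\cdot,x)\,dX = 1$. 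Since $K'$ is compact and $\sigma$ is Ahlfors regular (hence locally finite), this yields $\|\vec h\|_\infty \le \delta_0^{-n}\|g\|_\infty \sigma(K') < \infty$, completing the proof. The only mildly delicate point is not to try to bound $\beta$ pointwise (which could blow up if $\beta$ concentrates near a maximizing $X$) but to integrate it out first — this is the one step I would flag, although it is hardly an obstacle.
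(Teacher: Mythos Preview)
Your proof is correct and follows essentially the same approach as the paper's: both extract the compact support from the constraint $Z\in 2B_X$ with $X\in K$, and both bound $|\vec h|$ by pulling out $\delta(X)^{-n}$ (or the equivalent $\delta(Z)^{-n}$), using $|\vec\alpha|\le 1$, and then integrating $\beta$ in $X$ first via Fubini to exploit the normalization $\int_\Omega\beta(\cdot,x)\,dX=1$. The only cosmetic difference is that you factor out $\|g\|_\infty$ whereas the paper keeps $\int_{K'} g\,d\sigma$; both are finite since $g$ was chosen continuous with compact support.
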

 
 \bp
  Since $Z\in 2B_X$, we have $\delta(Z)/2 \leq \delta(X) \leq 2 \delta(Z)$. Combined with $|\vec \alpha| \leq 1$, we deduce
 \[|\vec h| \lesssim \delta(Z)^{-n}  \int_{K'}  g(x) d\sigma(x) \int_{K} \beta(X,x) dX \leq C_{K'}.\]
 So the function $\vec h$ is indeed bounded. 
 
It is also compactly supported because, in order for $\vec h$ to be non-zero, we need $Z \in 2B_X$ with $X\in K$. And that is possible only when $Z$ is in a compact set that is slightly bigger than $K$.
 \ep
 
 \medskip

We want now to bound the integral $\int_\Omega \nabla F \cdot \vec h \, dZ$. However, as one can expect, a lot of information is hidden in $\vec h$. Why do we use the quantity $\vec h$? Because, even if $\vec h$ depends on $F$ (and $K$), we are able to bound it independently of $F$ (and $K$), as shown in the lemma below. We define first $T(\vec h)$ as
\begin{equation} \label{DEFTP}
    T(\vec h)(x):=\sum_{I \in \mathcal W_x} \ell(I)^{n-d} \sup_{I} |\vec h|,
\end{equation}
where $\mathcal W_x$ is the collection of Whitney cubes that intersect $\gamma_3(x):= \{x\in \Omega, \, |X-x| < 3\delta(X)\}$ and $\ell(I)$ is the side-length of $I$, which is equivalent to $\dist(I,\partial \Omega)$. To build intuition, we observe that if the supremum was replaced by a $L^1$-average, then $T(\vec h)(x)$ would be essentially $\int_{\gamma_d(x)} |\vec h| dX/\delta(X)^d$, that is the integration of $|\vec h|$ over the radial direction(s). 

\begin{Lemma}\label{LM27}
We have
\begin{align*}
    \|T(\vec h)\|_{L^{q'}(\partial \Omega,\sigma)}\leq C_{q'},
\end{align*}
where $q$ is the one of Lemma \ref{LE21} and is used to construct $\vec h$.
\end{Lemma}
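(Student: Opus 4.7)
\emph{Plan of proof for Lemma \ref{LM27}.} The strategy is to convert the discrete sum defining $T(\vec h)$ into a non-tangential integral over $\Omega$, and then dualize using the Hardy--Littlewood maximal inequality on $\partial \Omega$. The crucial structural fact to be exploited is that for fixed $y$, the function $\beta(X,y)$ is supported in $\gamma(y)$, so $y$ lies in the surface ball $\Delta_X := \Delta(\pi(X), C\delta(X))$ whenever $\beta(X,y) \neq 0$.

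First I would bound $\sup_I |\vec h|$ locally. For $Z \in I$ Whitney and $X$ with $Z \in 2B_X$, one has $\delta(X) \approx \ell(I)$, so defining $U_I := \{X \in \Omega : 2B_X \cap I \neq \emptyset\}$ (which has $|U_I| \lesssim \ell(I)^n$ and $\delta(X) \approx \ell(I)$ on $U_I$) and $G(X) := \int_{K' \cap 8B_X} \beta(X,y) g(y)\, d\sigma(y)$, the triangle inequality and $|\vec \alpha| \leq 1$ give
\[ \ell(I)^{n-d} \sup_I |\vec h| \leq C \int_{U_I} G(X)\, \delta(X)^{-d}\, dX. \]
Summing over $I \in \mathcal W_x$ and using that the $\{U_I\}_I$ have bounded overlap (at each $X$, only $O(1)$ Whitney cubes $I$ can have $X \in U_I$, and they all satisfy $\ell(I) \approx \delta(X)$), I obtain
\[ T(\vec h)(x) \leq C \int_{\Gamma^*(x)} G(X)\, \delta(X)^{-d}\, dX \]
for some cone $\Gamma^*(x) = \{X \in \Omega : |X - x| < C\delta(X)\}$ enlarging $\gamma_d(x)$.

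Next I would test against $\phi \in L^q(\partial \Omega, \sigma)$ with $\phi \geq 0$, $\|\phi\|_q = 1$, and exchange integrations:
\[ \int T(\vec h) \phi\, d\sigma \leq C \int_\Omega G(X)\, \delta(X)^{-d} \int_{\{x : X \in \Gamma^*(x)\}} \phi(x)\, d\sigma(x)\, dX. \]
Since $\{x : X \in \Gamma^*(x)\} \subset \Delta_X$ with $\sigma(\Delta_X) \lesssim \delta(X)^d$, the inner factor is bounded by $C \fint_{\Delta_X} \phi\, d\sigma \leq C \mathcal M_\sigma \phi(y)$ for any $y \in \Delta_X$, where $\mathcal M_\sigma$ denotes the Hardy--Littlewood maximal operator on $(\partial \Omega, \sigma)$.

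Now I would unfold $G(X) = \int \beta(X,y) g(y)\, d\sigma(y)$. By the cone support observation, the $y$'s contributing satisfy $y \in \Delta_X$, hence $\mathcal M_\sigma \phi(y) \geq \fint_{\Delta_X} \phi\, d\sigma$. Combining these bounds and invoking $\int_\Omega \beta(X,y)\, dX = 1$ via Fubini yields
\[ \int T(\vec h) \phi\, d\sigma \leq C \int_{\partial \Omega} g(y) \mathcal M_\sigma \phi(y) \left(\int_\Omega \beta(X,y)\, dX\right) d\sigma(y) = C \int_{\partial \Omega} g(y) \mathcal M_\sigma \phi(y)\, d\sigma(y). \]
H\"older's inequality and the $L^q$-boundedness of $\mathcal M_\sigma$ (which requires $q > 1$, i.e.\ $q' < \infty$) then bound this by $C_{q'} \|g\|_{q'} \|\phi\|_q = C_{q'}$, and taking the supremum over $\phi$ gives the conclusion.

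The main potential obstacle is ensuring that the discrete-to-continuous passage in the first step does not lose control over the geometry — specifically that enlarging $\gamma_d(x)$ to $\Gamma^*(x)$ only inflates the aperture by a harmless bounded factor depending on $n$ and the uniform constants. The key conceptual gain is that combining the $X$-normalization of $\beta$ with its cone-support condition lets one collapse the a priori divergent scale sum $\sum_{I \in \mathcal W_x}$ into a single maximal function evaluation on $\partial \Omega$.
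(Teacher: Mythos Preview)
Your proposal is correct and follows essentially the same approach as the paper's proof: bound $|\vec h|$ on each Whitney region, dualize against $\phi \in L^q$, swap the order of integration, replace the $\phi$-average over $\Delta_X$ by $\mathcal M_\sigma \phi(y)$, and collapse the remaining $X$-integral using $\int_\Omega \beta(X,y)\,dX = 1$. The only cosmetic difference is that the paper keeps the Whitney decomposition throughout (recording $b_I(y) := \int_{I^*} \beta(X,y)\,dX$ and using $\sum_I b_I \lesssim 1$), whereas you pass to a continuous cone integral $\int_{\Gamma^*(x)}$ before dualizing; the two organizations are interchangeable.
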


\begin{proof}
We first remove $\vec \alpha$ from the estimate on $\vec h$, because we won't be able to do anything with it, so we have
\begin{equation} \label{LM27a}\begin{split}
|\vec h(Z)| & \leq \int_{\Omega} \1_{2B_X}(Z) \int_{8B_{X}} \beta(X,x) g(x) \, d\sigma(x) \, \frac{dX}{\delta(X)^n}\\
& \quad = \int_{\partial \Omega} g(x) \left( \int_\Omega \1_{2B_X}(Z) \beta(X,x) \1_{\gamma(x)}(X) \, \frac{dX}{\delta(X)^n} \right) \, d\sigma(x)
.\end{split}\end{equation}

Pick a Whitney cube $I \in \mathcal W$. Construct $I^*$ as 
\[I^* := \{X\in \Omega, \, \text{there exists $Z\in I$ such that } Z\in 2B_X\}.\]
Check that $I^*$ is a Whitney region larger than $I$, but still has a finite overlapping. So if $b_I(x)$ denotes $\int_{I^*} \beta(X,x) dX$, we have the nice control
\begin{equation} \label{bdonbI}
\sum_{I\in \mathcal W} b_I(x) \lesssim 1 \qquad \text{ for any } x\in \partial \Omega
\end{equation}
because, by definition, $\int_\Omega \beta(X,x) dX = 1$ for any $x\in \partial \Omega$. We take now $Z\in I$, in this case, any $X$ that satisfies $Z\in 2B_X$ lies in the Whitney region $I^*$, which implies that $\delta(X) \approx \ell(I)$. So \eqref{LM27a} becomes
\[\sup_{2I} |\vec h| \leq \ell(I)^{-n} \int_{\partial \Omega \cap 10^3 I} g(y) b_I(y) d\sigma(y).\]

We inject this bound in the expression of $T(\vec h)$ to obtain
\begin{equation} \label{LM27c}
T(\vec h)(x) \leq \sum_{I\in \mathcal W_x} \ell(I)^{-d} \int_{\partial \Omega \cap 10^3I} g(y) b_I(y) \, d\sigma(y). 
\end{equation}

We compute then the $L^{q'}$-norm of $T(\vec h)$ by duality. Let $\phi \in L^{q}(\partial \Omega,\sigma)$ be any non-negative function such that $\|\phi\|_{q} = 1$. We claim that 
\begin{equation} \label{LM27b}
\int_{\partial \Omega} T(\vec h)(x) \phi(x) \, d\sigma(x) \lesssim 1,
\end{equation}
which is exactly what we need to conclude the lemma. We use the bound \eqref{LM27c} and then Fubini's theorem to write
\[\begin{split}
\int_{\partial \Omega} T(\vec h)(x) \phi(x) \, d\sigma(x) & \lesssim \int_{\partial \Omega} \phi(x) \sum_{I\in \mathcal W_x} \ell(I)^{-d} \int_{\partial \Omega \cap 10^3I} g(y) b_I(y) \, d\sigma(y) \, d\sigma(x) \\
& \lesssim \int_{\partial \Omega} g(y) \sum_{I \in \mathcal W_x} b_I(y) \1_{10^3I}(y) \ell(I)^{-d} \int_{\partial \Omega \cap 10^3I} \phi(x) \, d\sigma(x) \, d\sigma(y),
\end{split}\]
where the last line holds because $I\in \mathcal W_x$ implies $x\in 1000I$ (we are not trying to be optimal here). Let $\mathcal M_\sigma$ denote the Hardy-Littlewood maximal function with respect to the $d$-dimensional Ahlfors measure $\sigma$. Since $y\in 10^3I$, we easily have 
\[\1_{y\in 10^3} \ell(I)^{-d} \int_{\partial \Omega \cap 10^3I} \phi(x) \, d\sigma(x) \lesssim \mathcal M_\sigma(\phi)(y)\]
and hence
\[\begin{split}
\int_{\partial \Omega} T(\vec h)(x) \phi(x) \, d\sigma(x) & 
\lesssim  \int_{\partial \Omega} g(y) \sum_{I \in \mathcal W} b_I(y) \mathcal M_\sigma(\phi)(y) \, d\sigma(y) \\
& \lesssim \int_{\partial \Omega} g(y) \mathcal M_\sigma(\phi)(y) \, d\sigma(y).
\end{split}\]
by \eqref{bdonbI}. We invoke now the H\"older's inequality and the $L^q$-boundedness of the operator $\mathcal M_\sigma$ to deduce
\[\begin{split}
\int_{\partial \Omega} T(\vec h)(x) \phi(x) \, d\sigma(x) & 
\lesssim  \|g\|_{L^{q'}(\partial \Omega,\sigma)} \|\mathcal M_\sigma(\phi)\|_{L^q(\partial \Omega,\sigma)} \lesssim \|g\|_{L^{q'}(\partial \Omega,\sigma)} \|\phi\|_{L^q(\partial \Omega,\sigma)} = 1
\end{split}\]
because, by definition, $\|g\|_{q'} = 1$ and $\|\phi\|_q = 1$. The claim \eqref{LM27b} and the lemma follow.
\end{proof}

\subsection{The solution $v$}

Our goal is to bound the expression $\int_\Omega \nabla F \cdot \vec h\, dZ$ from \eqref{LE21.eq1}. However, this expression is lacking derivatives. Indeed, the techniques employed here rely on integration by parts, that is, moving gradients and derivatives from one term to another, with errors that can be controlled. 
So the more terms with derivatives we have, the more possibilities we get.  
That is why we introduce $v$, which is essentially the solution to the inhomogeneous Dirichlet problem
\begin{align}\label{cor22.eq1}
\begin{cases}
&\mathcal{L}^*_1v= \diver \vec h \ \ \text{ in }\Omega;\\
&\Tr (v)=0\ \ \text{ on $\partial \Omega$},
\end{cases}
\end{align}
where $\vec h$ is the one constructed in Lemma \ref{LE21}.

We shall ultimately use two distinct representations of $v$. So we need to prove that those two definitions of $v$ coincide, which is very classical in the bounded codimension 1 case but more delicate in our context (which allows higher codimensional boundaries, and the elliptic theory is not as developed).

We write $G_1$ for the Green function associated to $\mathcal L_1$ as defined in Theorem \ref{LMGNEX}. We define $v$ on $\Omega$ as
\begin{equation} \label{defv}
v(X):= -\int_\Omega \nabla_Z G_1(Z,X) \cdot \vec h(Z) \, dZ,
\end{equation}
which is well defined because $\vec h(Z)$ is bounded and compactly supported (see Lemma \ref{lemh}) and $\nabla_Z G_1^*(X,.) = \nabla_Z G_1(.,X) \in L^{r}_{loc}(\Omega)$ for $r$ sufficiently close to $1$ (see items (iii)-(iv) of Theorem \ref{LMGNEX}).

\begin{Lemma} \label{lemv}
The function $v(X)$ constructed in \eqref{defv} lies in $W_0$ and verifies
\[\int_\Omega \mathcal A_1^T \nabla v \cdot \nabla \varphi \, dm = -\int_\Omega \vec h \cdot \nabla \varphi \, dX \qquad \text{ for } \varphi \in W_0.\]
\end{Lemma}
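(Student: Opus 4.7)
The plan is to prove Lemma \ref{lemv} in two steps: first construct a candidate $\tilde v \in W_0$ via Lax--Milgram satisfying the variational identity, then identify $\tilde v$ pointwise (a.e.) with the explicit formula \eqref{defv}.

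For the first step, I would verify that the functional $L: W_0 \to \R$ given by $L(\varphi) = -\int_\Omega \vec h \cdot \nabla \varphi \, dX$ is bounded. By Lemma \ref{lemh}, $\vec h$ is bounded and supported in a compact set $K_h \Subset \Omega$. On $K_h$ the weight $w$ is bounded below by some $c(K_h) > 0$, so Cauchy--Schwarz yields
\[
|L(\varphi)| \leq \|\vec h\|_\infty \int_{K_h} |\nabla \varphi|\, dX \leq \|\vec h\|_\infty |K_h|^{1/2} c(K_h)^{-1/2} \|\nabla \varphi\|_{L^2(dm)},
\]
so $L \in W^{-1}$. Lemma \ref{THLMG} (applied to $\mathcal L_1^*$, with zero boundary data and source $L$) then produces a unique $\tilde v \in W_0$ with $\int_\Omega \mathcal A_1^T \nabla \tilde v \cdot \nabla \varphi\, dm = L(\varphi)$ for all $\varphi \in W_0$.

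The second step is the main work. Fixing $X \in \Omega$, I will use the mollified Green function $G_1^\rho(\cdot, X) \in W_0$ introduced in the proof of Lemma \ref{lemtechnical}, defined via Lax--Milgram by
\[
\int_\Omega \mathcal A_1 \nabla G_1^\rho(\cdot, X) \cdot \nabla \phi \, dm = \fint_{B(X,\rho)} \phi \, dm, \qquad \phi \in W_0.
\]
Since $G_1^\rho(\cdot, X) \in W_0$, it is an admissible test function in the $\tilde v$-equation, and $\tilde v$ is an admissible test function in the $G_1^\rho$-equation. Pairing the two, the bilinear form cancels and I obtain the key duality identity
\[
\fint_{B(X,\rho)} \tilde v \, dm = -\int_\Omega \vec h \cdot \nabla G_1^\rho(\cdot, X) \, dX.
\]

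Finally, I will pass to the limit $\rho_\eta \to 0$ along the subsequence from the construction of the Green function (Theorem \ref{LMGNEX}). Away from $X$, $\nabla G_1^{\rho_\eta}(\cdot, X) \to \nabla G_1(\cdot, X)$ in $L^2_{loc}(\overline\Omega \setminus \{X\}, dm)$, and since $\vec h$ is bounded and compactly supported, the contribution of $K_h \setminus B(X,\epsilon)$ to the right-hand side converges to the corresponding piece of $v(X)$. The hard part is handling the contribution of $K_h \cap B(X,\epsilon)$, where the Green function is singular. Here I would invoke item (iv) of Theorem \ref{LMGNEX}, which gives $\|\nabla G_1^{\rho_\eta}(\cdot, X)\|_{L^r(B(X,\epsilon),dm)}$ bounded for some $r>1$ uniformly in $\eta$, and similarly for $G_1(\cdot, X)$; combined with $\vec h \in L^\infty$, these bounds force the near-pole contribution to vanish as $\epsilon \to 0$ uniformly in $\eta$. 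Thus the right-hand side converges to $v(X)$. On the left, the Lebesgue differentiation theorem gives $\fint_{B(X,\rho)} \tilde v\, dm \to \tilde v(X)$ for a.e.\ $X \in \Omega$ (using $\tilde v \in L^1_{loc}$). This yields $\tilde v = v$ a.e., completing the proof. The principal obstacle is precisely this uniform-in-$\rho$ control of the singular integral near $X$ when $X$ may lie inside $\supp \vec h$; all other ingredients are straightforward applications of the established elliptic theory.
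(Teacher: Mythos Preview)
Your proof is correct and the strategy is sound, but it differs from the paper's in an interesting way. The paper mollifies the source: it sets $\vec h_\epsilon := \vec h * \rho_\epsilon \in C_0^\infty(\Omega)$, defines $v_\epsilon(X) := -\int_\Omega \nabla_Z G_1(Z,X)\cdot \vec h_\epsilon(Z)\,dZ = \int_\Omega G_1(Z,X)\diver(\vec h_\epsilon)(Z)\,dZ$, and observes that because $\diver(\vec h_\epsilon) \in C_0^\infty$, the representation \eqref{eqGRep} immediately places $v_\epsilon$ in $W_0$ and gives the weak equation. Then it shows $v_\epsilon \to v$ pointwise (using $\nabla G_1(\cdot,X)\in L^{p'}_{loc}$ and $\vec h_\epsilon\to\vec h$ in $L^p$) and $v_\epsilon \to \tilde v$ in $W_0$ (via stability of Lax--Milgram under $\diver(\vec h_\epsilon)\to\diver(\vec h)$ in $W^{-1}$), concluding $v=\tilde v\in W_0$. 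You instead regularize the Green function and test against it: you build $\tilde v$ first and then identify it with $v$ through the duality identity $\fint_{B(X,\rho)}\tilde v\,dm = -\int_\Omega \vec h\cdot\nabla G_1^\rho(\cdot,X)\,dY$. The two routes are essentially mirror images of each other; the paper's version has the slight advantage that all the convergences are read off directly from the statements of Theorem~\ref{LMGNEX} and Lemma~\ref{THLMG}, whereas your near-pole argument needs one fact not stated verbatim in the text.

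That fact is the only place to be careful. You invoke item (iv) of Theorem~\ref{LMGNEX} as a bound on $\nabla G_1^{\rho}(\cdot,X)$ uniformly in $\rho$, but as written the theorem is only about the limiting $G_1$. The uniform-in-$\rho$ version is of course how (iv) is actually proved in the Gr\"uter--Widman style construction in \cite[Section~14]{david2020elliptic}, so you should cite that construction rather than the condensed Theorem~\ref{LMGNEX}. Once this is granted, your H\"older argument near the pole is exactly right: with $r\in(1,\tfrac{n}{n-1})$ fixed, $\int_{B(X,\epsilon)}|\vec h||\nabla G_1^\rho|\,dY \le \|\vec h\|_\infty\,|B(X,\epsilon)|^{1/r'}\,\|\nabla G_1^\rho(\cdot,X)\|_{L^r(2B_X)}\to 0$ as $\epsilon\to 0$, uniformly in $\rho$, and the far-from-pole contribution converges by the $L^2_{loc}$ convergence of gradients you cite. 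Everything else in your sketch is routine.
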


\bp
The idea of the proof is: if $\vec h$ were smooth, there would be no difficulty. Thus, as expected, we mollify $\vec h$ and we check that we can take all the desired limits.

We construct the mollifier by using a non-negative radial function $\rho\in C_0^\infty(\R^n)$ supported in $B(0,1)$ and verifying $\int_{\R^n} \rho =1$, and then we define $\rho_{\epsilon}(Z):=\epsilon^{-n}\rho(\epsilon^{-1}Z)$ for $\epsilon>0$ and $Z\in \Omega$. We set $\vec h_{\epsilon}:=\vec h\ast \rho_{\epsilon}\in C_0^\infty(\Omega)$. The fact that $\vec h_\epsilon$ is compactly supported in $\Omega$ is true only for small $\epsilon>0$ (but it does not matter since we intend to take limits) because $\vec h$ is already compactly supported in the first place. 

We fix $p\in (n,\infty)$, so that $\nabla_Z G_1(Z,X)$ is locally in $L^{p'}$ (see item (iv) in Theorem \ref{LMGNEX}). Note for later that 
\begin{equation} \label{lemv1}
\vec h_{\epsilon} \rightarrow \vec h \text{ in } L^{p}(\Omega),
\end{equation}
which is a classical result and essentially equivalent to the density of smooth functions in $L^p$.

We define
\begin{equation} \label{defveps} \begin{split}
v_\epsilon(X) := -\int_\Omega \nabla_Z G_1(Z,X) \cdot \vec h_\epsilon(Z) \, dZ = \int_\Omega G_1(Z,X) \diver( \vec h_\epsilon)(Z) \, dZ. \end{split}\end{equation}
Since now $\diver( \vec h_\epsilon) \in C^\infty_0(\Omega)$, by \eqref{eqGRep} (and Lemma \ref{GEQGT}), we have that $v_\epsilon$ is the function of $W_0$ that satisfies
\begin{equation} \label{lemv2} \begin{split} \int_\Omega \mathcal A_1^T \nabla v_\epsilon \cdot \nabla \varphi \, dm & = \int_\Omega \diver(\vec h_\epsilon)  \varphi \, dX = -\int_\Omega \vec h_\epsilon \cdot \nabla \varphi \, dX \qquad \text{ for } \varphi \in W_0. 
\end{split}\end{equation}
In addition, we also know the following convergences. 
\begin{enumerate}
\item We can pass the limit as $\epsilon\to 0$ in the expression $\int_\Omega \nabla G_1(.,X) \cdot \vec h_\epsilon \, dZ$, because $\nabla G_1(.,Z)\in L^{p'}_{loc}(\Omega)$, all the $h_\epsilon$ are supported in the same compact subset of $\Omega$, and $\vec h_\epsilon$ converges to $\vec h$ in $L^p(\Omega)$ (see \eqref{lemv1}). So we deduce $v_\epsilon \to v$ pointwise (and thus in the distribution sense).
\item  The functions $\diver(\vec h_\epsilon)$ converge to $\diver(\vec h)$ in $W^{-1}$. Indeed, if $K \Subset \Omega$ is a compact set that contains the support of all the $h_\epsilon$ and $p\in (2,\infty)$, then
\[\begin{split}
 \|\diver(\vec h_\epsilon) - \diver(\vec h)\|_{W^{-1}} 
 & = \sup_{\|\varphi\|_{W_0} \leq 1} \left| \int_\Omega  (\vec h - \vec h_\epsilon) \cdot \nabla \varphi \, dX  \right| \\
 & \leq \|\vec h - \vec h_\epsilon\|_{L^p} \|\nabla \varphi\|_{L^{p'}(K)} \\
 & \leq C_K \|\vec h - \vec h_\epsilon\|_{L^p} \to 0 \quad \text{ as } \epsilon \to 0.   \end{split} \]
\item The previous convergences combined with the Lax-Milgram theorem (Lemma \ref{THLMG}) imply that $v_\epsilon$ converges in $W_0$.
\end{enumerate}
The combination of three convergences shows that 
\begin{equation} \label{lemv3}
v_\epsilon \to v \qquad \text{ in } W_0
\end{equation}
so in particular, $v\in W_0$ and we also have 
\[\int_\Omega \mathcal A_1^T \nabla v \cdot \nabla \varphi \, dm = - \int_\Omega \vec h \cdot \nabla \varphi \, dX \qquad \text{ for } \varphi \in W_0\]
by taking the limit in \eqref{lemv2}. The lemma follows.
\ep

\medskip

We return to the estimate of the non-tangential maximal function. Recall that at this point, we want to bound $\int_\Omega \nabla F \cdot \vec h\, dZ$. The next  step will involve the square function of $v$, which is defined as
\begin{equation} \label{defSv}
    S(v)(x):=\Big (\int_{\gamma(x)} |\nabla v|^2\frac{dY}{\delta(Y)^{n-2}}.\Big )^{1/2}.
\end{equation}

Even though the next lemma is an analogue of Lemma 2.8 in \cite{kenig1995neumann}, we provide here an alternative proof which is self contained (up to some basic results on the Green functions) and does not rely on taking the limit of a sequence of elliptic operators with smooth coefficients.

\begin{Lemma}\label{cor22}
Recall that $M$ is the constant in \eqref{CMforE}. We have
\[\int_\Omega \nabla F \cdot \vec h\, dZ \leq C M \|\wt N(\nabla u_0)\|_{L^q(\partial \Omega)} \|S(v)\|_{L^{q'}(\partial \Omega)},\]
where the constant depends only on the constant in \eqref{Carleson}.
Hence, thanks to Lemma \ref{LE21},
\[\|\wt N_K(\nabla F)\|_{L^q(\partial \Omega)} \lesssim M \|\wt N(\nabla u_0)\|_{L^q(\partial \Omega)} \|S(v)\|_{L^{q'}(\partial \Omega)}.\]
\end{Lemma}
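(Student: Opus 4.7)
The plan is to use integration by parts (twice) to relate $\int_\Omega \nabla F \cdot \vec h\, dZ$ to a volume integral of $\mathcal{E}\, \nabla u_0 \cdot \nabla v$, and then apply the Carleson condition on $\mathcal{E}$ together with a Fubini argument turning volume integrals into cone integrals.

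\textbf{Step 1 (setup and admissible test functions).} First observe that $F=u_1-u_0$ lies in $W_0$: by Proposition \ref{DEFHM}, both $u_0$ and $u_1$ coincide with the unique Lax-Milgram solutions of Lemma \ref{THLMG} with trace $f \in C_c(\partial\Omega)\cap H$, and their difference has zero trace. Similarly, $v\in W_0$ by Lemma \ref{lemv}. Thus $F$ can be used as a test function in the variational identity satisfied by $v$, and $v$ can be used as a test function in the weak formulations $\mathcal{L}_0 u_0=0$ and $\mathcal{L}_1 u_1=0$.

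\textbf{Step 2 (key identity).} Applying Lemma \ref{lemv} with $\varphi=F\in W_0$ gives
\[
\int_\Omega \nabla F\cdot \vec h\, dZ \;=\; -\int_\Omega \mathcal{A}_1^T \nabla v\cdot \nabla F\, dm \;=\; -\int_\Omega \mathcal{A}_1 \nabla F\cdot \nabla v\, dm.
\]
Subtracting the weak equations $\int_\Omega \mathcal{A}_1\nabla u_1\cdot\nabla v\, dm = 0$ and $\int_\Omega \mathcal{A}_0\nabla u_0\cdot\nabla v\, dm = 0$ (both valid since $v\in W_0$) and using $\mathcal{A}_0-\mathcal{A}_1=\mathcal{E}$, we obtain
\[
\int_\Omega \mathcal{A}_1 \nabla F\cdot \nabla v\, dm \;=\; \int_\Omega \mathcal{E}\, \nabla u_0 \cdot \nabla v\, dm,
\]
so that
\[
\int_\Omega \nabla F\cdot \vec h\, dZ \;=\; -\int_\Omega \mathcal{E}(Y)\, \nabla u_0(Y)\cdot \nabla v(Y)\, dm(Y).
\]
This is the usual ``perturbation identity'' at the heart of the Dahlberg/Fefferman-Kenig-Pipher philosophy.

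\textbf{Step 3 (Fubini + Cauchy-Schwarz + Carleson).} Using $|\mathcal{E}\, \xi\cdot\zeta|\leq|\mathcal{E}||\xi||\zeta|$ pointwise, together with the elementary identity $\int_{\partial\Omega}\mathbf{1}_{\gamma(x)}(Y)\, d\sigma(x)\approx \delta(Y)^d$ (valid by Ahlfors regularity), we convert the volume integral into an integral over cones:
\[
\left|\int_\Omega \nabla F\cdot \vec h\, dZ\right| \;\lesssim\; \int_{\partial\Omega}\!\int_{\gamma(x)} |\mathcal{E}(Y)||\nabla u_0(Y)||\nabla v(Y)|\, \frac{dY}{\delta(Y)^{n-1}}\, d\sigma(x).
\]
Cauchy-Schwarz inside the cone (against the weight $\delta(Y)^{-n}$ for one factor and $\delta(Y)^{2-n}$ for the other) produces exactly $S(v)(x)$ from the definition \eqref{defSv} and a factor $\big(\int_{\gamma(x)} |\mathcal{E}|^2|\nabla u_0|^2 \delta^{-n}\, dY\big)^{1/2}$. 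Hölder's inequality with exponents $(q,q')$ in the outer integral then separates $\|S(v)\|_{L^{q'}}$, and the remaining $L^q$-norm is controlled by the Carleson inequality \eqref{Carleson} applied with $\phi=|\nabla u_0|$, yielding $\|\wt N(\nabla u_0)\|_{L^q}$ modulated by the Carleson norm $M$. Combining with Lemma \ref{LE21} gives the conclusion.

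\textbf{Main obstacle.} The only delicate point is Step 2: establishing the identity requires $F\in W_0$ so that it may legally be plugged into the variational formulation of $v$. This in turn hinges on the agreement between the elliptic-measure solution \eqref{defhm1} and the Lax-Milgram solution of Lemma \ref{THLMG}, which is already asserted in Proposition \ref{DEFHM}. Once this is granted, the remainder of the argument is the standard perturbation/Carleson machinery.
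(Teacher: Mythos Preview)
Your proof is correct, and your Step~3 (Fubini + Cauchy--Schwarz + H\"older + Carleson) is exactly what the paper does once the key identity is in hand. The interesting difference is in Step~2. The paper establishes
\[
\int_\Omega \nabla F \cdot \vec h\, dZ \;=\; \int_\Omega \mathcal E\,\nabla u_0\cdot\nabla v\, dm
\]
by going back to the Green-function representations $F(X)=\int_\Omega \nabla_Y G_1(X,Y)\cdot\mathcal E\nabla u_0\,dm$ and $v(X)=-\int_\Omega \nabla_Z G_1(Z,X)\cdot\vec h\,dZ$ and then swapping the order of integration; since Fubini does not apply directly, the authors build mollified Green functions $G_\eta$, $G_\eta^*$ and a symmetric kernel $\mathcal G_\eta$, and pass to the limit through several layers of approximation. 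Your route bypasses all of this: you simply observe that $F=u_1-u_0\in W_0$ (because for $f\in C_c(\partial\Omega)\cap H$ the elliptic-measure solutions agree with the Lax--Milgram solutions, by Proposition~\ref{DEFHM}), plug $\varphi=F$ into Lemma~\ref{lemv}, and then test the equations $\mathcal L_i u_i=0$ against $v\in W_0$. This is more elementary and avoids the Green-function machinery entirely; the only price is that it relies squarely on the identification of Proposition~\ref{DEFHM}, which you correctly flag as the one point requiring care. (There is a harmless sign discrepancy: your computation gives $-\int_\Omega \mathcal E\,\nabla u_0\cdot\nabla v\,dm$ rather than $+$, but this is irrelevant after taking absolute values.)
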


\bp We claim that 
\begin{equation} \label{cor22a}
\int_\Omega \nabla F \cdot \vec h\, dZ = \int_\Omega \E \nabla u_0 \cdot \nabla v \, dm.
\end{equation}

To see how (\ref{cor22a}) proves our lemma, we first observe that for any positive function $\phi$ on $\Omega$, by Fubini's theorem,
\[\begin{split}
\int_{\partial \Omega} \left( \int_{\gamma(x)} \phi(X) \, \frac{dX}{\delta(X)^{n-1} }\right) \, d\sigma(x) 
& \gtrsim \int_\Omega \phi(X) \delta(X)^{-d} \sigma(8B_X \cap \partial \Omega) \, dm(X) \gtrsim \int_\Omega \phi(X) \, dm(X)
\end{split}\]
because, if $\hat x$ is such that $|X-\hat x| = \delta(X)$, then $\sigma(8B_X ) \geq \sigma(\Delta(\hat x,\delta(X))) \gtrsim \delta(X)^d$ by \eqref{DEFSIG}.
As a consequence, by successively applying the Cauchy-Schwarz inequality and the H\"older inequality, the claim \eqref{cor22a} implies
\[\begin{split}
\int_\Omega \nabla F \cdot \vec h\, dZ 
& \leq \int_\Omega |\E| |\nabla u_0| |\nabla v| \, dm \\
& \lesssim \int_{\partial \Omega} \left( \int_{\gamma(x)} |\E| |\nabla u_0| |\nabla v| \, \frac{dX}{\delta(X)^{1-n}}\right) \, d\sigma(x) \\
& \lesssim  \left(\int_{\partial \Omega} \left(  \int_{\gamma(x)} |\E|^2 |\nabla u_0|^2 \frac{dX}{\delta(X)^{n}} \right)^\frac q2 \, d\sigma(x)\right)^{\frac1q} \|S(v)\|_{L^{q'}(\partial \Omega}.
\end{split}\]
The lemma follows then from the Carleson inequality \eqref{Carleson}.

\medskip

So it remains to show the claim \eqref{cor22a}. Formally, the claim is just a permutation of integrals, that is, by using the definition \eqref{DEFF} of $F(X)$  and \eqref{defv}, one has 
\[\begin{split}
\int_\Omega \nabla F \cdot \vec h\, dX & = \int_\Omega F \diver(\vec h) \, dX \\
& = \int_\Omega \int_\Omega \nabla_Y G_1(X,Y)  \cdot \E(Y) \nabla u_0(Y) \diver(\vec h)(X) \, dm(Y) \, dX \\
& = \int_\Omega  \E(Y) \nabla u_0(Y) \cdot \nabla \Big( \int_\Omega G_1(X,Y) \diver(\vec h)(X) \, dX \Big)  dm(Y) \\
& = \int_\Omega \E \nabla u_0 \cdot \nabla v \, dm.
 \end{split}\]
However, the assumptions of Fubini's theorem are not verified, so the justification will end up being way more delicate.
 
The issue mainly comes from the Green function $G_1$, which has a degeneracy when $Z=Y$ that we cannot control very well. So instead, we shall use approximation of the Green function. We use the same mollifier as the previous lemma. Let $\rho\in C_0^\infty(\R^n)$ supported in $B(0,1)$ and verifying $\int_{\R^n} \rho =1$, and then define $\rho_{\eta}(Z):=\eta^{-n}\rho(\eta^{-1}Z)$ for $\eta>0$ and $Z\in \Omega$. We also construct a cut-off function $\varphi_\eta \in C^\infty_0(\Omega)$ such that $\varphi_\eta(Z)= 0$ if $\delta(Z) <2\eta$, $\varphi_\eta(Z) = 1$ if $\delta(Z) >4\eta$, and $|\nabla \varphi_\eta| \leq \eta^{-1}$.
Check that the map $\phi \in W_0 \to \rho_\eta * (\varphi_\eta\phi)(Z)$  lies in $W^{-1} = (W_0)^*$ for all $\eta>0$ and all $Z \in \Omega$. 
By the Lax-Milgram theorem (see Lemma \ref{THLMG}), for each $\eta>0$ and $X,Y\in \Omega$, we can construct\footnote{from here and forward, we drop the indice 1 on G, since any Green function will always be associated to $\mathcal L_1$ or $\mathcal L_1^*$.}  $G_\eta(.,Y)$ and $G^*_\eta(.,X)$ as the only functions in $W_0$ such that
\begin{equation} \label{cor22c}
\int_\Omega \mathcal A_1 \nabla_Z G_\eta(Z,Y) \cdot \nabla \phi(Z) \, dm(Z) = \rho_\eta * (\varphi_\eta\phi)(Y) \quad \text{ for } \phi \in W_0
\end{equation}
and similarly
\begin{equation} \label{cor22d}
\int_\Omega \mathcal A_1 \nabla \phi(Z)  \cdot  \nabla_Z G_\eta^*(Z,X) \, dm(Z) = \rho_\eta * (\varphi_\eta\phi)(X) \quad \text{ for } \phi \in W_0.
\end{equation}
The combination of the two above lines easily gives the nice identity
\begin{equation} \label{cor22e} \begin{split}
\mathcal G_\eta(X,Y)& := [\rho_\eta * (\varphi_\eta G^*_\eta)(.,X)](Y) = [\rho_\eta * (\varphi_\eta G_\eta)(.,Y)](X) \\
& \qquad = \int_\Omega \mathcal A_1 \nabla G_\eta(Z,Y) \cdot  \nabla G_\eta^*(Z,X) \, dm(Y).
\end{split} \end{equation}
Note that the identity implies that the function $\mathcal G_\eta$ lies in $W_0$ and is smooth both in $X$ and $Y$, which will makes $\mathcal G_\eta$ a nice tool for the next lines. We define $v_\epsilon$ as in \eqref{defveps} and
\[v_{\epsilon,\eta}(Y) := - \int_\Omega \nabla_X G_\eta(X,Y) \cdot \vec h_\epsilon(X) \, dX.\]
We plug in $G_\eta(.,Y)$ as the test function in \eqref{lemv2} to get
\[\begin{split}
v_{\epsilon,\eta}(Y) =  \int_\Omega \mathcal A^T_1 \nabla v_\epsilon \cdot \nabla G_\eta(.,Y) \, dm = \rho_\eta * (\varphi_\eta v_\epsilon)(Y)
\end{split}\]
by \eqref{cor22c}. By \eqref{lemv3}, we have that $v_\epsilon \to v$ in $W_0$, and a classical convolution result yields that $v_{\epsilon,\eta}$ converges to $ v_{0,\eta}:= \rho_\eta*(\varphi_\eta v)$ in $W_0$ (as $\epsilon \to 0$ and uniformly in $\eta$). The fact that $\nabla v_{0,\eta} \to \nabla v$ in $L^2$ is also well known. So by a diagonal argument, the function 
\[v_{\epsilon,\epsilon}(Y) =-\int_\Omega \nabla_X G_\epsilon(X,Y) \cdot \vec h_\epsilon(X) \, dX = \left<\diver(\vec h),\mathcal G_\epsilon(.,Y) \right>_{W^{-1},W_0}\]
converges to $v$ in $W_0$ as $\epsilon \to 0$. The same proof gives that 
\[\begin{split}
F_{\epsilon,\epsilon}(X)& := \int_\Omega G^*_\epsilon(Y,X) \varphi_\epsilon(Y) \diver(\rho_\epsilon*(\E\nabla u_0))(Y) \, dm(Y) \\
& = \left<\diver(\E\nabla u_0), \mathcal G_\epsilon(X,.) \right>_{W^{-1},W_0}
\end{split}\]
converges to $F$ in $W_0$. All these convergences show that the claim \eqref{cor22a} would be proven once we establish that
\begin{equation} \label{cor22f}
\int_\Omega \nabla F_{\epsilon,\epsilon} \cdot \vec h\, dX = \int_\Omega \E \nabla u_0 \cdot \nabla v_{\epsilon,\epsilon} \, dm
\end{equation}
for any $\epsilon>0$. Observe that if we replace $\rho_\eta$ by $\rho_\eta * \rho_\eta$ in \eqref{cor22c} and \eqref{cor22d}, then we replace $G_\eta(Z,Y)$ by $(\rho_\eta*G_\eta(Z,.))(Y)$ and $G_\eta^*(Z,X)$ by $(\rho_\eta*G^*_\eta(Z,.))(X)$. We deduce that we can make $G_\eta(Z,Y)$ and $G_\eta^*(Z,X)$ as smooth as we want in the second variable, and hence quantities like  $\nabla_Y(\nabla_X G_\eta)(X,Y)$ make perfect sense and lie in $L^\infty_Y(L^2_X)$. From these remarks and \eqref{cor22e}, the identity \eqref{cor22f} is just a permutation of integrals and two differentiations under the integral symbol.
\ep

\subsection{The $S<N$ estimate.}

The aim of this subsection is to bound $\|S(v)\|_q$ by the non-tangential maximal function of $v$, $\delta\nabla v$, and a term that depends on $T(\vec h)$ defined in \eqref{DEFTP}.

For the first time, we shall use \eqref{RHom1}, but in a weaker form (see Theorem \ref{THCABO}) which says that there exist $C,\theta >0$ such that
\begin{equation} \label{RHom2}
\frac{\omega^\infty_{1,*}(E)}{\omega^\infty_{1,*}(\Delta)} \leq C \left( \frac{\sigma(E)}{\sigma(\Delta)} \right)^\theta \quad \text{ and } \quad   \frac{\sigma(E)}{\sigma(\Delta)}  \leq C \left( \frac{\omega^\infty_{1,*}(E)}{\omega^\infty_{1,*}(\Delta)} \right)^\theta
\end{equation}
for any boundary ball $\Delta \subset \partial \Omega$ and any Borel set $E \subset \Delta$.

In the next lemma, $\mathcal M_\omega$ and $\mathcal M_\sigma$ are the Hardy-Littlewood maximal function for the elliptic measure $\omega := \omega^\infty_{1,*}$ and the Ahlfors regular measure $\sigma$. Moreover, $S_*$ and $\wt N_*$ are respectively the square function and  the averaged non-tangential maximal function, but defined with a wider cone $\gamma^*(x) = \{X\in \Omega, \, |X-x| < C_*\delta(X)$\}. The value $C_*$ of the aperture does not matter much, and will be chosen to match our purpose in the next proof. The important and well known facts are 
\begin{align} \label{gamma*togamma}
\|N_*(u)\|_{p}\lesssim \|N(u)\|_{p}, \qquad \|\wt N_*(u)\|_{p}\lesssim \|\wt N(u)\|_{p}, \quad \text{ and } \quad \| S_*(u)\|_{p}\lesssim \| S(u)\|_{p},
\end{align}
for every $p\in (0,\infty)$ and every $u$ for which the considered quantities make sense (the constants depends on $p$ but not $u$). The proof of \eqref{gamma*togamma} - in the case $\partial \Omega = \R^{n}$ - can be found in Chapter II, equation (25) of \cite{stein2016harmonic} for the non-tangential square function and in Proposition 4 of \cite{coifman1985some} for the square function. Although the proof is written when
$\partial\Omega=\mathbb{R}^n$, it can be easily extended to all doubling metric spaces.

\begin{Lemma}\label{LM25}
For the function $v$ constructed in \eqref{defv}, define the set
\begin{equation}
E_{\beta\alpha}:=\{N_*(v)+\wt N_*(\delta|\nabla v|)+\mathcal M_{\omega}(T(\vec h))>\beta\alpha\} \subset \partial \Omega.
\end{equation}
There exist $\eta, \beta_0>0$ such that for all $\alpha >0$ and $\beta \in (0, \beta_0)$, we have
\begin{align}\label{LM25.eq00}
    \sigma\{x\in \partial \Omega, \, S(v)(x)>2\alpha \text{ and } \mathcal M_\sigma (\1_{E_{\beta\alpha}})(x)\leq 1/2\}\leq C\beta^\eta\sigma\{S_*(v)>\alpha\},
\end{align}
where $C$ is independent of $\alpha$ and $\beta$.
\end{Lemma}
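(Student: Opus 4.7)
\medskip

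\noindent\emph{Proof proposal.} This is a classical good-$\lambda$ inequality, so the plan is to follow the standard Whitney/stopping-time strategy adapted to our setting, the key novelty being that the right-hand side of $\mathcal{L}_1^\ast v=\diver\vec h$ must be controlled by the functional $T(\vec h)$. Let $\mathcal O=\{S_\ast(v)>\alpha\}\subset\partial\Omega$; this is open since $S_\ast$ is lower semicontinuous. I would perform a Whitney-type decomposition of $\mathcal O$ into boundary balls $\Delta_k=\Delta(x_k,r_k)$ with bounded overlap and such that $C\Delta_k$ meets $\mathcal O^c$, so that a point $\bar x_k$ with $S_\ast(v)(\bar x_k)\le\alpha$ exists near each $\Delta_k$. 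It then suffices to prove, for each $k$ such that $\Delta_k$ meets $\{\mathcal M_\sigma(\1_{E_{\beta\alpha}})\le 1/2\}$, the localized bound
\begin{equation}\label{pp-local}
\sigma\Big(\big\{x\in\Delta_k: S(v)(x)>2\alpha\big\}\cap\big\{\mathcal M_\sigma(\1_{E_{\beta\alpha}})\le 1/2\big\}\Big)\le C\beta^\eta\,\sigma(\Delta_k),
\end{equation}
and sum in $k$.

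The second step is to split the cone. For $x\in\Delta_k$, write $\gamma(x)=\gamma^{\mathrm{top}}(x)\cup\gamma^{\mathrm{bot}}(x)$, where $\gamma^{\mathrm{bot}}(x)=\gamma(x)\cap T_k$ with $T_k$ the Carleson box over $C_0\Delta_k$ of some fixed height, and $\gamma^{\mathrm{top}}(x)$ is the rest. By choosing the aperture $C_\ast$ of $\gamma^\ast$ large enough, $\gamma^{\mathrm{top}}(x)\subset\gamma^\ast(\bar x_k)$ for every $x\in\Delta_k$, so the top contribution to $S(v)(x)^2$ is bounded by $S_\ast(v)(\bar x_k)^2\le\alpha^2$. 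Hence on the set in \eqref{pp-local} one has
\[
\int_{\gamma^{\mathrm{bot}}(x)}|\nabla v|^2\delta(Y)^{2-n}\,dY>3\alpha^2.
\]

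The third and hardest step is a local $S\!<\!N$ estimate over $T_k$. Integrating this pointwise inequality against the elliptic measure $\omega=\omega_{1,\ast}^\infty$ and applying Fubini together with the CFMS comparison of Lemma \ref{CPWI01} (which turns $\omega(\Delta(x,\delta(Y)))$ on the boundary into the right weight in the interior), Chebyshev will give
\begin{equation}\label{pp-cheby}
\omega\Big(\big\{x\in\Delta_k: S(v)(x)^2-S_\ast(v)(\bar x_k)^2>3\alpha^2\big\}\Big)
\lesssim \alpha^{-2}\int_{T_k}|\nabla v|^2\,\delta(Y)^{2-n}\omega(\Delta_Y)\,\delta(Y)^{-d}\,dY,
\end{equation}
where $\Delta_Y$ is a boundary ball associated to $Y$. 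The right-hand side must be bounded by
\begin{equation}\label{pp-localSN}
\lesssim \int_{C_1\Delta_k}\big(N_\ast(v)^2+\wt N_\ast(\delta|\nabla v|)^2+T(\vec h)^2\big)\,d\omega.
\end{equation}
To produce \eqref{pp-localSN}, I would use a smooth Carleson-box cutoff $\phi$ supported in (a slight enlargement of) $T_k$, test the equation of Lemma \ref{lemv} against $v\phi^2$, and invoke the ellipticity of $\mathcal A_1$ to get
\[
\int|\nabla v|^2\phi^2\,dm\lesssim \int|\nabla\phi|^2 v^2\,dm+\int|\vec h|\,|\nabla v|\phi^2\,dX+\int|\vec h|\,|\nabla\phi|\,|v|\phi\,dX.
\]
Standard Cauchy–Schwarz absorbs $|\vec h||\nabla v|$ into a fraction of the left-hand side and a Carleson-type term $\int|\vec h|^2\phi^2\delta^{n-d}\,dX$; the gradient-of-cutoff terms live on the "sides/top" of $T_k$ and are controlled by $\wt N_\ast(\delta\nabla v)^2$ and $N_\ast(v)^2$ (using the doubling of $m$ and the Poincaré-type structure already built into the Whitney definition of $T_k$). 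The Carleson term $\int|\vec h|^2\phi^2\delta^{n-d}\,dX$ is then the pointwise companion of $T(\vec h)^2$: dominating $|\vec h|$ by $\sup_I|\vec h|$ on each Whitney cube $I$ and summing over the cubes intersecting the cone $\gamma(x)$ gives precisely $T(\vec h)(x)$ after integration in $x$.

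The final step is to convert \eqref{pp-localSN} back into a $\sigma$-estimate on $\Delta_k$ and thereby extract the gain $\beta^\eta$. If $\Delta_k$ contains a point $y_0$ with $\mathcal M_\sigma(\1_{E_{\beta\alpha}})(y_0)\le 1/2$, then $\sigma(E_{\beta\alpha}\cap C_1\Delta_k)\le\tfrac12\sigma(C_1\Delta_k)$, so in particular $E_{\beta\alpha}^c\cap C_1\Delta_k$ has comparable $\sigma$-mass to $\Delta_k$. Outside $E_{\beta\alpha}$, each of $N_\ast(v)$, $\wt N_\ast(\delta|\nabla v|)$, and $\mathcal M_\omega(T(\vec h))$ is $\le\beta\alpha$, and since $T(\vec h)\le\mathcal M_\omega(T(\vec h))$ $\omega$-a.e., the right-hand side of \eqref{pp-localSN} is at most $C(\beta\alpha)^2\,\omega(C_1\Delta_k)$, so \eqref{pp-cheby} yields $\omega(\{\cdot\}\cap\Delta_k)\lesssim\beta^2\omega(\Delta_k)$. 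Applying the second estimate in \eqref{RHom2}—namely $\sigma(E)/\sigma(\Delta)\le C(\omega(E)/\omega(\Delta))^\theta$—with $\theta=\eta/2$ (say) converts this into $\sigma(\{\cdot\}\cap\Delta_k)\lesssim\beta^{2\theta}\sigma(\Delta_k)$, proving \eqref{pp-local} and hence the lemma. The main obstacle is step three, specifically organizing the bookkeeping so that the cutoff-boundary terms produce exactly $N_\ast(v)$ and $\wt N_\ast(\delta\nabla v)$ and that the $\vec h$ Carleson term reproduces $T(\vec h)$ rather than a larger averaged quantity; the suprema in the definition \eqref{DEFTP} of $T(\vec h)$ are designed for precisely this purpose.
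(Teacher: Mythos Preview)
Your overall architecture is right and matches the paper: Whitney decomposition of $\{S_*(v)>\alpha\}$, reduction to a local estimate on each Whitney ball that meets $\{\mathcal M_\sigma(\1_{E_{\beta\alpha}})\le 1/2\}$, truncation of the cone to height $\approx r_k$, Chebyshev/Fubini against $\omega=\omega^\infty_{1,*}$, and the final conversion from an $\omega$-estimate back to $\sigma$ via \eqref{RHom2}. The gap is in your Step~3.

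After Chebyshev and Fubini, the interior quantity you must control is not $\int|\nabla v|^2\phi^2\,dm$ but
\[
\int_{\Omega_F\cap\{\delta\le r_k\}}|\nabla v|^2\,G\,dm,
\]
where $G$ is the Green function with pole at infinity for $\mathcal L_1$: the CFMS comparison you invoke gives $\omega(\Delta_Y)\approx\delta(Y)^{d-1}G(Y)$, so the weight $G$ is forced (and the extra $\delta(Y)^{-d}$ in your displayed Chebyshev bound should not be there). Testing Lemma~\ref{lemv} against $v\phi^2$ produces the left-hand side \emph{without} the weight $G$, and since $G$ degenerates at $\partial\Omega$ this is a genuinely different object; there is no uniform way to pass from one to the other on a Carleson box. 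The correct test function is $vG\Psi$, with $\Psi$ a sawtooth cutoff. Expanding $\int\mathcal A_1^T\nabla v\cdot\nabla v\,(G\Psi)\,dm$ by the product rule gives three pieces: one is handled via Lemma~\ref{lemv} exactly as you describe, but another produces $\int\mathcal A_1^T\nabla v\cdot\nabla G\,(v\Psi)\,dm$, whose main part $\frac12\int\nabla(v^2\Psi)\cdot\mathcal A_1\nabla G\,dm$ vanishes \emph{only} because $G$ is a solution to $\mathcal L_1G=0$. Without $G$ in the test function this cancellation is unavailable and the argument does not close.

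A secondary issue is your final step. You cannot bound $\int_{C_1\Delta_k}\big(N_*(v)^2+\wt N_*(\delta|\nabla v|)^2+T(\vec h)^2\big)\,d\omega$ by $(\beta\alpha)^2\omega(C_1\Delta_k)$ using only the pointwise control valid on $E_{\beta\alpha}^c$, since the integrand may be large on $E_{\beta\alpha}\cap C_1\Delta_k$. The paper handles this by taking the sawtooth over the good set $F\subset\{\mathcal M_\sigma(\1_{E_{\beta\alpha}})\le 1/2\}$; then for every interior point $Y$ of the sawtooth the nearby boundary ball $\Delta'$ satisfies $\omega(\Delta')\lesssim\omega(\Delta'\cap E_{\beta\alpha}^c)$ (from $\sigma(\Delta'\cap E_{\beta\alpha}^c)\ge\frac12\sigma(\Delta')$ and $A_\infty$), so one can Fubini back to a boundary integral over $E_{\beta\alpha}^c\cap C\Delta_k$ only, where your pointwise bounds legitimately apply.
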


The above ``good-$\lambda$'' argument entails the following $L^p$ bounds. 

\begin{Cor}\label{CO26}
For any $p>0$, we have
\begin{align*}
    \int_{\partial \Omega} |S(v)|^{p}\, d\sigma\leq C\int_{\partial \Omega} |N(v)+\wt N(\delta|\nabla v|) + \mathcal M_{\omega}(T(\vec h))|^{p}\, d\sigma.
\end{align*} 
\end{Cor}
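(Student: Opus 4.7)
\emph{Proof proposal.} The corollary follows from Lemma \ref{LM25} by the standard good-$\lambda$ machinery. My plan is to apply the layer cake formula to get
\[
\int_{\partial\Omega}|S(v)|^{p}\,d\sigma = p\,2^{p}\int_{0}^{\infty}\alpha^{p-1}\,\sigma\{S(v)>2\alpha\}\,d\alpha,
\]
and then split the level set as
\[
\sigma\{S(v)>2\alpha\}\leq \sigma\{S(v)>2\alpha,\ \mathcal M_{\sigma}(\1_{E_{\beta\alpha}})\leq 1/2\}+\sigma\{\mathcal M_{\sigma}(\1_{E_{\beta\alpha}})>1/2\}.
\]
The first term is controlled by $C\beta^{\eta}\sigma\{S_{*}(v)>\alpha\}$ directly from Lemma \ref{LM25}. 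The second term is controlled by $C\sigma(E_{\beta\alpha})$ using the weak-type $(1,1)$ bound for the Hardy–Littlewood maximal operator $\mathcal M_{\sigma}$ (which holds on the Ahlfors regular boundary).

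Integrating in $\alpha$ and invoking layer cake again, with a change of variable $s=\beta\alpha$ in the second piece, produces
\[
\|S(v)\|_{p}^{p}\leq C\beta^{\eta}\|S_{*}(v)\|_{p}^{p}+C\beta^{-p}\bigl\|N_{*}(v)+\wt N_{*}(\delta|\nabla v|)+\mathcal M_{\omega}(T(\vec h))\bigr\|_{p}^{p}.
\]
By the aperture comparison \eqref{gamma*togamma}, $\|S_{*}(v)\|_{p}\lesssim \|S(v)\|_{p}$ and similarly for $N_{*}$ and $\wt N_{*}$. Choosing $\beta\in(0,\beta_{0})$ small enough that $C\beta^{\eta}$ times the aperture constant is $\leq 1/2$, I can absorb $\|S(v)\|_{p}^{p}$ into the left-hand side, which yields the desired bound.

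The only serious obstacle is ensuring that $\|S(v)\|_{p}$ is finite \emph{a priori}, since absorption into an infinite quantity is meaningless. This is the standard wrinkle in every good-$\lambda$ argument. I would handle it by truncating the square function, e.g.\ replacing $S(v)$ by
\[
S^{T}(v)(x):=\Bigl(\int_{\gamma(x)\cap\{T^{-1}<\delta(Y)<T\}}|\nabla v|^{2}\,\delta(Y)^{2-n}\,dY\Bigr)^{1/2},
\]
which is finite since $v\in W_0$ and $\nabla v\in L^{2}(\Omega,dm)$. The good-$\lambda$ inequality of Lemma \ref{LM25} is local enough in the cone to remain valid for $S^{T}$ with constants independent of $T$; running the argument for $S^{T}$ and then letting $T\to\infty$ by monotone convergence yields the stated estimate for $S(v)$.
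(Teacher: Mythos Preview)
Your proposal is correct and follows essentially the same route as the paper: split the level set $\{S(v)>2\alpha\}$, use the weak-$(1,1)$ bound for $\mathcal M_\sigma$ on one piece and Lemma~\ref{LM25} on the other, integrate via layer cake, invoke \eqref{gamma*togamma} to pass from $S_*,N_*,\wt N_*$ to $S,N,\wt N$, and then choose $\beta$ small to absorb. The paper's proof is line-for-line the same, except that it does \emph{not} address the a priori finiteness of $\|S(v)\|_p$ at all---it simply writes ``we can hide the square function \dots\ to the left-hand side'' and moves on---so your truncation paragraph is extra care the paper omits. One small caveat on that paragraph: from $v\in W_0$ you only get $S^T(v)\in L^2(\partial\Omega,\sigma)$ directly (via Fubini), not $L^p$ for all $p>0$; to cover general $p$ you would need to also exploit that $\vec h$ is bounded and compactly supported (Lemma~\ref{lemh}), which forces $v$ and $\nabla v$ to decay and makes $S^T(v)$ bounded with integrable tails.
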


\noindent {\em Proof of Corollary \ref{CO26}:}
Let $E_{\beta\alpha}$ be the set defined in Lemma \ref{LM25}.
Recall that the Hardy Littlewood maximal operator $\mathcal M_{\sigma}$ is bounded from $L^1$ to weak-$L^1$. Then
\begin{align}\label{CO26.eq00}
    \sigma\{\mathcal M_\sigma (\1_{E_{\beta\alpha}})>1/2\}\lesssim \int_{\partial \Omega} \1_{E_{\beta\alpha}} d\sigma  = \sigma(E_{\beta\alpha}).
\end{align}
According to Lemma \ref{LM25}, we have, for $\beta \leq \beta_0$, that
\begin{equation}\label{CO26.eq02} \begin{split}
    \sigma\{S(v)>2\alpha\} & \leq \sigma\{\mathcal M_\sigma (\1_{E_{\beta\alpha}})>1/2\} +  \sigma\{S(v)>2\alpha, \mathcal M_\sigma (\1_{E_{\beta\alpha}})\leq 1/2\} \\
    & \leq  \sigma\{\mathcal M_\sigma (\1_{E_{\beta\alpha}})>1/2\} + C\beta^\eta\sigma\{S_*(v)>\alpha\}.
\end{split}\end{equation}
The last two computations imply, for any $p>0$, that
\begin{multline}\label{CO26.eq03}
    \int_{\partial \Omega} |S(v)|^{p}\, d\sigma= c \int_0^\infty \alpha^{p-1}\sigma\{S(v)>2\alpha\}d\alpha\\
    \leq C\beta^\eta  \int_0^\infty\alpha^{p-1}\sigma\{S_*(v)>\alpha\}\, d\alpha
    +C\int_0^\infty \alpha^{p-1}\sigma(E_{\beta\alpha})\, d\alpha\\
    \leq C\beta^\eta \int_{\partial \Omega} |S_*(v)|^{p}\, d\sigma+\frac{C}{\beta^{p}}\Big \{\int_{\partial \Omega} |\wt N_*(v) + \wt N_*(\delta|\nabla v|) + \mathcal M_{\omega}(T_{p}(\vec h))|^{p}\, d\sigma\Big \} \\
    \leq C' \beta^\eta \int_{\partial \Omega} |S(v)|^{p}\, d\sigma+\frac{C'}{\beta^{p}}\Big \{\int_{\partial \Omega} |\wt N(v) + \wt N(\delta|\nabla v|) + \mathcal M_{\omega}(T_{p}(\vec h))|^{p}\, d\sigma\Big \}
\end{multline}
by \eqref{gamma*togamma}. Choose a $\beta\leq \beta_0$ small enough so that $C'\beta^\eta \leq 1/2$. Hence, we can hide the square function of the last inequality of (\ref{CO26.eq03}) to the left-hand side. The corollary follows.
\ep

\bigskip

\noindent {\em Proof of Lemma \ref{LM25}:}
Fix $\alpha >0$. Define $\mathcal{S}:=\{S_*(v)>\alpha\}$ and $\mathcal{S}':=\{S(v)>2\alpha, \mathcal M_\sigma (\1_{E_{\beta\alpha}})\leq 1/2\}$. Take any surface ball $\Delta$ of radius $r$. It suffices to show that there exists a constant $C$ such that for any surface ball $\Delta$ that intersects $\partial \Omega \setminus \mathcal S$, we have
\begin{align}\label{LM25.eq01}
    \omega^\infty_{1,\infty}(F)\leq C\beta^2\omega^\infty_{1,*}(\Delta),
\end{align}
where $F:=\mathcal{S}' \cap \Delta$.  Indeed, the bound \eqref{RHom2} - which comes from the $L^{q'}$-solvability of the Dirichlet problem for $\mathcal L_1^*$ - immediately turns \eqref{LM25.eq01} into 
\begin{align}\label{LM25.eq30}
\sigma(F)\leq C\beta^\eta\sigma(\Delta).
\end{align}
Why is \eqref{LM25.eq30} enough? Because we can construct a Whitney decomposition of $\mathcal{S}$ in the following way. 
For any $x\in \mathcal{S}$, we can build the boundary ball $\Delta_x := \Delta(x,\dist(x,\mathcal{S}^c)/40)$.
Since the radius of a ball $\Delta_x$ that intersects a compact subset of $\mathcal S$ is uniformly bounded (depending on the compact), the Vitali covering lemma allows us to obtain a non-overlapping sub-collection $\{\Delta_{x_j}\}$ of $\{\Delta_x\}$ for which $\bigcup_j 5\Delta_{x_j} = \mathcal{S}$. Each ball $50\Delta_{x_j}$ intersects $\partial \Omega \setminus \mathcal S$ and so, if $F_j = \mathcal S' \cap 50\Delta_{x_j}$, we have by \eqref{LM25.eq30}
\[\sigma(\mathcal S') \leq \sum_j \sigma(F_j) \leq C\beta^\eta\sigma(50\Delta_j) \leq C' \beta^\eta\sigma(\Delta_j) \leq C' \beta^\eta \sigma(\mathcal S),\]
which is the desired bound \eqref{LM25.eq00}.

\medskip

\textbf{Step 1:} Let $\Delta$ be a surface ball of radius $r$ that contains a point $x_\Delta \in \partial \Omega \setminus \mathcal S$, i.e. a point satisfying $S_*(v)(x_\Delta)\leq \alpha$. We write $F:=\Delta\cap \mathcal{S'}$. 

Observe that for any $x\in \Delta$ and $X\in \gamma(x) \setminus B(x,r)$, we have
\[|X-x_\Delta| \leq |X-x| + |x-x_\Delta| < 2\delta(X) + 2r < 2\delta(X) + 2|X-x| < 6\delta(X).\]
Consequently, $\gamma(x) \setminus B(x,r) \subset \gamma^*(x)$ as long as the aperture $C_* \geq 6$ (which we choose as such). So if $S^{r}(v)(x)$ is a truncated square function defined as
\begin{align*}
    S^{r}(v)(x):=\Big (\int_{\gamma(x)\cap B(x,r)}|\nabla v|^2\frac{dY}{\delta(Y)^{n-2}}\Big )^{1/2},
\end{align*}
then we easily have  
\[|S^{r}(v)(x)|^2 \geq |S(v)(x)|^2 - |S_*(v)(x_\Delta)|^2 \geq \alpha^2,\quad \text{ for } x\in F,\]
 that is
\begin{align}\label{LM25.eq42}
S^{r}(v)(x) \geq \alpha,\quad \text{ for } x\in F.
\end{align}

\medskip

\textbf{Step 2:} In the sequel, to lighten the notation, we shall write $\omega$ for $\omega^\infty_{1,*}$, the elliptic measure with pole at infinity associated to $\mathcal L_1^*$. In a similar way, $G(Y)$ will denote the Green function with pole at infinity associated to $\mathcal L_1$. Both of them are linked together by Lemma \ref{CPWI01}.  Let $\Omega_F$ be the saw-tooth region over $F$ defined as $\Omega_F:=\bigcup_{x\in F}\gamma(x)$. Then
\begin{multline}\label{LM25.eq02}
    \omega(F)\leq \frac{1}{\alpha^2}\int_F |S^{r}(v)|^2 d\omega \leq \frac{1}{\alpha^2}\int_F\int_{\gamma(x)\cap B(x,r)}|\nabla v|^2\frac{dY}{\delta(Y)^{n-2}} d\omega(x)\\
    \leq \frac{1}{\alpha^2}\int_{\Omega_F\cap \{\delta(Y)\leq r\}}  |\nabla v|^2 \omega(B(Y,2\delta(Y)) \cap \partial \Omega) \frac{dY}{\delta(Y)^{n-2}}.
\end{multline}
If $y\in \partial \Omega$ is a point such that $|Y-y| = \delta(Y)$, then
\begin{equation} \label{omegaG}
\omega(B(Y,2\delta(Y)) \cap \partial \Omega) \approx \omega(\Delta(y,\delta(Y)) \approx \delta(Y)^{d-1} G(Y)
\end{equation}
by the doubling property of $\omega$ (Lemma \ref{DBPWI}) and then Lemma \ref{CPWI01}. We use the above estimate in \eqref{LM25.eq02} to obtain
\[\omega(F) \lesssim \frac1{\alpha^2} \int_{\Omega_F \cap \{\delta(Y)\leq r\}} |\nabla v|^2 G \, \frac{dY}{\delta(Y)^{n-d-1}}.\]
Let us recall that $dm(Y) = \delta(Y)^{d+1-n} dY$. Together with the ellipticity of matrix $\mathcal{A}_1^T$, we have
\begin{align}\label{LM25.eq32}
    \omega(F)\lesssim \frac{1}{\alpha^2}\int_{\Omega_F\cap \{\delta(Y)\leq r\}}\mathcal{A}^T_1\nabla v\cdot \nabla v\, G \, dm.
\end{align}

Choose a cut-off function $\phi_{F}\in C^\infty(\R^n)$ such that, $0 \leq \phi_F \leq 1$,  $\phi_{F}\equiv 1$ on $\Omega_F$, and $\phi_{F}$ is supported on a larger saw-tooth region $\Omega^3_F:=\bigcup_{x\in F} \gamma_3(x)$, with $\gamma_3(x) := \{X\in \Omega, \, |X-x| < 3\delta(X)\}$. In addition, we can always pick the cut-off function $\phi_F$ so that $|\nabla\phi_F(Y)|\lesssim 1/\delta(Y)$. Pick another smooth function $\phi_{r}$ such that $0\leq \phi_r \leq 1$, $\phi_{r}\equiv 1$ when $\delta(Y)\leq r$ and $\phi_{r}\equiv 0$ when $\delta(Y)\geq 2r$ and $|\nabla \phi_r| \leq 2/r$. Define $\Psi=\phi_F\phi_{r}$. Then we have
\begin{align}\label{LM25.eq43}
    |\nabla \Psi(Y)|\lesssim \frac{\1_{D_1}(Y)}{\delta(Y)}+\frac{\1_{D_2}(Y)}{\delta(Y)},
\end{align}
where $D_1:=\{Y\in \Omega^3_F\setminus \Omega_F, \delta(Y)\leq 2r\}$ and $D_2:=\{Y\in \Omega^3_F, r\leq \delta(Y)\leq 2r\}$. By the product rule, the term \eqref{LM25.eq32} can be rewritten as,
\begin{multline*}
    \omega(F)\lesssim \frac{1}{\alpha^2}\int_{\Omega}\mathcal{A}^T_1\nabla v\cdot \nabla v\ (G\Psi) \, dm\\
    =\frac{1}{\alpha^2}\Big (\int_{\Omega}\mathcal{A}^T_1\nabla v\cdot \nabla [vG\Psi] \, dm -\int_{\Omega}\mathcal{A}^T_1\nabla v\cdot \nabla G \, (v\Psi) \, dm
    -\int_{\Omega}\mathcal{A}^T_1\nabla v\cdot \nabla \Psi \, (Gv) \, dm\Big ) \\
     =:\frac1{\alpha^2}(I+II+III).
\end{multline*}
The lemma will be proven once we show that $I$, $II$, $III$ are all bounded by $C(\alpha \beta)^2 \omega(\Delta)$.

\medskip

\textbf{Step 3: The term $\mathbf I$.} We want to use Lemma \ref{lemv}, and so we need to check that $vG\Psi \in W_0$. We have that $v\in W_0$ (also by Lemma \ref{lemv}). Thanks to the elliptic theory recalled in Section \ref{Selliptic}, we also have that $v$ is H\"older continuous close to the boundary (when we are outside the support of $\vec h$) and $G\Psi \in W_0 \cap L^\infty(\Omega)$. 
So in order to get that $vG\Psi \in W_0$, we only need to explain why $v\in L^\infty_{loc}(\Omega)$. The control of solutions for inhomegeneous Dirichlet problem was not done in \cite{david2017elliptic}, but that is fine, because we only require local boundedness inside the domain, so we can use the result from the classical (unweighted) theory, which can be found in Theorem 8.17 of \cite{gilbarg2015elliptic}. 
Now, we apply Lemma \ref{lemv}, which entails that
\begin{multline*}
    I= - \int_{\Omega}\vec h \cdot \nabla(vG\Psi) \,  dY
    =-\int_{\Omega}\vec h\cdot \nabla v \, (G \Psi)\,  dY\\
    -\int_\Omega\vec h\cdot \nabla G \, (v\Psi) \,  dY -\int_\Omega\vec h\cdot \nabla \Psi \, (v G) \, dY:=I_1+I_2+I_3.
\end{multline*}

By the definition of $F \subset \mathcal S'$, for any $x\in F$, we have $\mathcal M_\sigma(\1_{E_{\beta\alpha}})(x)\leq 1/2$. Thus, for any surface ball $\Delta'\subset {\partial \Omega}$ which contains such a point $x \in F$, we necessary have $\sigma(\Delta'\cap E_{\beta\alpha})/\sigma(\Delta')\leq 1/2$. This implies that $\sigma(\Delta'\cap E^c_{\beta\alpha})/\sigma(\Delta')> 1/2$. The $A_\infty$-absolute continuity \eqref{RHom2} yields then
\begin{align}\label{LM25.eq04}
    \omega(\Delta'\cap E^c_{\beta\alpha})/\omega_*(\Delta')\geq c >0.
\end{align}
The comparison \eqref{omegaG} now entails that
\begin{align}\label{LM25.eq06}
    |I_1| \leq \int_{\Omega^3_F}|\vec h| |\nabla v| G \, dY\lesssim \int_{\Omega^3_F}|\vec h| |\nabla v| \, \omega(\Delta(y,\delta(Y))) \delta(Y)^{1-d} \, dY,
\end{align}
where we recall that $y$ is a point on the boundary such that $|Y-y| = \delta(Y)$.
Since $\Omega^3_{F}$ is a sawtooth region over $F$, there exists a constant $C_0$ ($C_0= 4$) such that for all $Y\in \Omega^3_{F}$,
\begin{align*}
F\cap \Delta(y, C_0\delta(Y))\neq \emptyset.
\end{align*}
Thus, by (\ref{LM25.eq04}), 
\begin{align}\label{LM25.eq05}
    \omega(\Delta(y, \delta(Y)))\leq \omega(\Delta(y, C_0\delta(Y)))\lesssim \omega(E^c_{\beta\alpha}\cap\Delta(y, C_0\delta(Y))).
\end{align}
Together with (\ref{LM25.eq05}), (\ref{LM25.eq06}) becomes
\begin{multline}\label{LM25.eq07}
    |I_1|\lesssim \int_{\Omega^3_F}|\vec h||\nabla v|\frac{\omega(E^c_{\beta\alpha}\cap\Delta(y, C_0\delta(Y)))}{\delta(Y)^{d-1}}dY
   \\
    \lesssim\int_{E^c_{\beta\alpha}\cap C'_0\Delta}\Big (\int_{\gamma_3(x)}\frac{|\nabla v||\vec h|}{\delta(Y)^{d-1}}dY\Big )d\omega(x).
\end{multline}
Recall that $\gamma_3(x) \subset \gamma_d(x)$, which is used in the construction of $T(\vec h)$ in \eqref{DEFTP}. By H\"older's inequality, 
\begin{multline} \label{LM25.I1}
    \int_{\gamma_3(x)}\frac{|\nabla v||\vec h|}{\delta(Y)^{d-1}}dY
    \lesssim \sum_{I \in \mathcal W_x} \Big (\fint_{I}\delta|\nabla v||\vec h|dY\Big )\ell(I)^{n-d} \\
    \lesssim \sum_{I \in \mathcal W_x}  \Big (\fint_{I}\delta^2|\nabla v|^2dY\Big )^{1/2}\Big (\fint_{I}|\vec h|^2 dY\Big )^{1/2} \ell(I)^{n-d} \lesssim  \wt N_*(\delta|\nabla v|)(x)T(\vec h)(x)
\end{multline}
if we choose the aperture $C_*$ of the cone $\gamma_*(x)$ that defines $\wt N_*$ big enough so that $I \subset \gamma^*(x)$ for all $I\in \mathcal W_x$. When $x\in E^c_{\beta\alpha}$, we have $\wt N_*(\delta|\nabla v|)(x)\leq \beta\alpha$. Therefore, if $x_0$ is any point in $E_{\beta\alpha}^c \cap C'_0\Delta$ (if the set is empty, then $I_1=0$ and there is nothing to prove), then  (\ref{LM25.eq07}) can be further continued as,
\begin{align}\label{LM25.eq99}
    |I_1|\lesssim (\beta\alpha)\int_{E^c_{\beta\alpha}\cap C_0\Delta}T(\vec h) \, d\omega \lesssim (\beta\alpha)\omega(C_0\Delta) \mathcal M_{\omega}(T(\vec h))(x_0)\lesssim (\beta\alpha)^2\omega(\Delta),
\end{align}
thanks to the doubling property of $\omega$ (Lemma \ref{DBPWI}) and the fact that $\mathcal M_{\omega}(T_p(\vec h))(x_0)\leq \beta\alpha$ for $x_0 \in E^c_{\beta\alpha}$. 

The term $I_3$ is very similar to $I_1$. Indeed, in $I_1$, we only use the fact that $0 \leq \Psi \leq 1$ and is supported in $\Omega^*_F$. For $I_3$, we use the fact that $|\nabla \Psi| \lesssim 1/\delta$ and is supported in $\Omega^*_F$, and we use $N_*(v)$ instead of $\wt N_*(\delta |\nabla v|)$. So with the same reasoning as of $I_1$, we also have
\begin{equation} \label{LM25.I3}
|I_3| \lesssim (\beta \alpha)^2 \omega(\Delta).
\end{equation}

The term $I_2$ is slightly more different from $I_1$ than $I_3$ is, so we shall write a bit more. Observe that $I_2$ is the same as $I_1$ once you replace $\nabla v$ by $v\frac{\nabla G}G$. So similarly to \eqref{LM25.eq07}, we have that
\begin{equation}\label{LM25.eq07b}
    |I_2|\lesssim \int_{E^c_{\beta\alpha}\cap C'_0\Delta}\Big (\int_{\gamma_3(x)}\frac{v|\nabla G||\vec h|}{G\delta(Y)^{d-1}}dY\Big )d\omega(x).
\end{equation}
Then analogously to \eqref{LM25.I1}, we get that
\begin{multline} \label{LM25.I1b}
    \int_{\gamma_3(x)}\frac{v|\nabla G||\vec h|}{G\delta(Y)^{d-1}}dY
    \lesssim \sum_{I \in \mathcal W_x} \Big (\fint_{I} v \frac{\delta |\nabla G|}{G}|\vec h|dY\Big )\ell(I)^{n-d} \\
    \lesssim \sum_{I \in \mathcal W_x}  \Big (\fint_{I} v^2 \, dY\Big )^{1/2} \Big (\fint_{I}\frac{\delta^2|\nabla G|^2}{G^2} dY\Big )^{1/2}  \sup_I |\vec h| \, \ell(I)^{n-d}.
\end{multline}
Since $G$ is a positive solution to $\mathcal L^1$, the Harnack inequality (Lemma \ref{LMHANK}) and the Cacciopoli inequality (Lemma \ref{LMICAE}) entail that
\begin{equation} \label{CaccioG}
\begin{split}
\fint_{I}\frac{\delta^2|\nabla G|^2}{G^2} dY \approx \frac{\delta(X_I)^2}{G(X_I)^2} \fint_{I} |\nabla G|^2 dm \lesssim \frac{1}{G(X_I)^2} \fint_{2I} G^2 dm \approx 1,
\end{split}\end{equation}
whenever $X_I$ is any point in $I$.
So the bound \eqref{LM25.I1b} becomes 
\[ \int_{\gamma_3(x)}\frac{v|\nabla G||\vec h|}{G\delta(Y)^{d-1}}dY \lesssim \sum_{I \in \mathcal W_x}  \Big (\fint_{I} v^2 \, dY\Big )^{1/2} \ell(I)^{n-d} \sup_I |\vec h| \lesssim  N_*(v)(x) T(\vec h)(x).\]
We use this last estimate in \eqref{LM25.eq07b} and we conclude that 
\begin{equation} \label{LM25.I2}
|I_2| \lesssim (\beta \alpha)^2 \omega(\Delta).
\end{equation}

\textbf{Step 4: Carleson estimates for $\mathbf{|\nabla \Psi|}$.} As we shall see, the terms $II$ and $III$ will only involve $\Psi$ via its gradient. So it will be useful to have good estimates on $\delta |\nabla \Psi|$, or on $\1_{D_1 \cup D_2}$ (which bigger by \eqref{LM25.eq43}). We aim to prove that
\begin{equation} \label{NablaPsiCM}
M_{\Delta} := \int_{C'_0\Delta} \left(\sum_{\begin{subarray}{c} I\in \mathcal W_x \end{subarray}} \sup_I (\1_{D_1 \cup D_2}) \right) \, d\omega(x) \leq C \omega(\Delta),
\end{equation}
where $C'_0$ is the constant on the right-hand side of \eqref{LM25.eq07}.

Even if the inequality \eqref{NablaPsiCM} is presented in an unusual way, the result is fairly classical. Let us sketch it. By simply switching the integral and the sum, we have
\[M_{\Delta} \leq \sum_{\begin{subarray}{c} I\in \mathcal W \\ I \cap  D_1 \neq \emptyset \end{subarray}} \omega(\Delta_I) + \sum_{\begin{subarray}{c} I\in \mathcal W \\ I \cap D_2 \neq \emptyset \end{subarray}} \omega(\Delta_I) := M_1 + M_2 ,\]
where $\Delta_I := \Delta(\xi_I,200\ell(I))$ for a point $\xi_I \in 100I \cap \partial \Omega$ that will be chosen later. If $I$ intersects $D_2$, then $\ell(I) \approx \dist(I,\Delta) \approx r$: there is a uniformly bounded amount of those cubes, and we also have $\omega(\Delta_I) \approx \omega(\Delta)$. We deduce 
\[M_2:= \sum_{\begin{subarray}{c} I\in \mathcal W \\ I \cap D_2 \neq \emptyset \end{subarray}} \omega(\Delta_I) \lesssim \omega(\Delta)\]
as desired. As for $J_1$, we use the fact that we have some freedom on the choice of $\xi_I$. If $I \cap D_1 \supset \{X_I\} \neq \emptyset$, then we choose $\xi_I \in \partial \Omega$ such that $|\xi_I - X_I| = \delta(X_I):= r_I$. Note that we necessary have $r_I \leq 60\ell(I)$, so $\xi_I \in 100I$. Recall that $X_I \in D_1$ means that there exists $x_I \in F$ such that $|X_I - x_I| < 3\delta(X_I) = 3|X_I - \xi_I|$ but $|X_I - x| \geq 2|X_I - \xi_I|$ for all $x\in F$. Consequently,
\begin{equation} \label{prxiI}
10\ell(I) \leq r_I \leq \dist(\xi_I, F) \leq 4 r_I.
\end{equation}
When $I \cap D_1 \neq \emptyset$, we define $\wt \Delta_I = \Delta(\xi_I, \ell(I))$ using the $\xi_I$ that we constructed and satisfies \eqref{prxiI}. Notice that the collection $\{\wt \Delta_I\}$ is finitely overlapping, because if $x\in \wt \Delta_I$, then $\ell(I) \approx \dist(x,F)$ and $I \subset B(x,62\ell(I))$, and there can be only a uniformly finite Whitney cubes with this property. We conclude by writing
\[M_1 := \sum_{\begin{subarray}{c} I\in \mathcal W \\ I \cap  D_1 \neq \emptyset \end{subarray}} \omega(\Delta_I)  \lesssim \sum_{\begin{subarray}{c} I\in \mathcal W \\ I  \cap  D_1 \neq \emptyset \end{subarray}} \omega(\wt \Delta_I) \lesssim \omega \left( \bigcup_{\begin{subarray}{c} I\in \mathcal W \\ I \cap  D_1 \neq \emptyset \end{subarray}}  \wt \Delta_I \right) \leq \omega(C''_0 \Delta) \lesssim \omega(\Delta).\]
The first and last inequalities above hold because of the doubling property of $\omega$ (Lemma \ref{DBPWI}), the second inequality is due to the finite overlap of $\{\wt \Delta_I\}$, and the third one is a consequence of the fact that all $\Delta_I$ (and thus $\wt \Delta$) are included in a dilatation of $\Delta$ when $I$ intersects $D_1$.

\medskip

\textbf{Step 5: The terms II and III.} Let us talk about $III$ first. We can repeat the strategy developed in Step 3 for $I_1$. We use the fact that $\delta \nabla \Psi \leq \1_{D_1 \cup D_2}$ and $dm(Y) = \delta(Y)^{d+1-n} dY$, and similarly to \eqref{LM25.eq07}, we have
\begin{equation}\label{LM25.eq07c}
    |III|   \lesssim\int_{E^c_{\beta\alpha}\cap C'_0\Delta}\Big (\int_{\gamma_3(x)}\frac{|\nabla v|v \1_{D_1 \cup D_2}}{\delta(Y)^{n-1}}dY\Big )d\omega(x).
\end{equation}
Yet, 
\[\begin{split}
\int_{\gamma_3(x)}\frac{|\nabla v|v \1_{D_1 \cup D_2}}{\delta(Y)^{n-1}}dY & \leq \sum_{I \in \mathcal W_x} \int_I \frac{|\nabla v|v \1_{D_1 \cup D_2}}{\delta(Y)^{n-1}}dY \\
& \lesssim \sum_{I \in \mathcal W_x}  \left(\fint_I \delta^2 |\nabla v|^2 \, dY \right)^\frac12 \left(\fint_I v^2 \, dY\right)^\frac12 \sup_I(\1_{D_1\cup D_2}) \\
& \lesssim \wt N_*(\delta |\nabla v|)(x) N_*(v)(x) \sum_{I \in \mathcal W_x} \sup_I(\1_{D_1\cup D_2}) \\
& \leq (\alpha\beta)^2 \sum_{I \in \mathcal W_x} \sup_I(\1_{D_1\cup D_2})
\end{split}\]
when $x\in E_{\beta\alpha}^c$. We conclude that
\[|III| \lesssim (\alpha\beta)^2 \int_{C'_0\Delta} \left(\sum_{I \in \mathcal W_x} \sup_I(\1_{D_1\cup D_2}) \right) d\omega(x) \lesssim (\alpha\beta)^2 \omega(\Delta) \]
by \eqref{NablaPsiCM} and the doubling property of $\omega$ (Lemma \ref{DBPWI}).

\medskip

For $II$, we want to use the fact that $G$ is a solution to $\mathcal L_1$, so we write
\begin{multline*}
   II=- \int_\Omega \nabla v \cdot \mathcal{A}_1\nabla G (v\Psi) \,  dm \\
    =-  \frac12 \int_\Omega \nabla (v^2\Psi) \cdot \mathcal{A}_1\nabla G \, dm + \int_\Omega v^2 \mathcal{A}_1\nabla G \cdot \nabla \Psi \, dm
    =:II_1+II_2.
\end{multline*}
The discussion at the beginning of Step 3 shows that $v \in W_0 \cap L^\infty(\supp \Psi)$. So $v^2 \Psi$ lies in $W_0$ and it is compactly supported in $\R^n$. Consequently, $v^2 \Psi$ is a valid test function for $G$, and thus $II_1 = 0$.
Hence it remains to bound $II_2$, which is actually similar to $III$. Following again the same strategy, replacing $|\nabla v|$ by $v|\nabla G|/G$ in the argument of $III$, we have
\[    |II_2|   \lesssim\int_{E^c_{\beta\alpha}\cap C'_0\Delta}\Big (\int_{\gamma_3(x)}\frac{v^2|\nabla G| \1_{D_1 \cup D_2}}{G \delta(Y)^{n-1}}dY\Big )d\omega(x)\]
and when $x\in E_{\alpha\beta}^c$
\[\begin{split}
\int_{\gamma_3(x)}\frac{v^2|\nabla G| \1_{D_1 \cup D_2}}{G \delta(Y)^{n-1}}dY 
& \lesssim \sum_{I \in \mathcal W_x}  \left(\fint_I \frac{\delta^2|\nabla G|^2}{G^2}  \, dY \right)^\frac12 \left(\fint_I v^4 dY\right)^\frac12 \sup_I(\1_{D_1\cup D_2}) \\
& \lesssim |N_*(v)(x)|^2 \sum_{I \in \mathcal W_x} \sup_I(\1_{D_1\cup D_2}) \\
& \leq (\alpha\beta)^2 \sum_{I \in \mathcal W_x} \sup_I(\1_{D_1\cup D_2}),
\end{split}\]
where we used \eqref{CaccioG} for the second inequality. With a similar reasoning as the one used on $III$, we conclude that 
\[|II| = |II_2| \lesssim (\alpha\beta)^2 \omega(\Delta)\]
thanks to \eqref{NablaPsiCM}.  The lemma follows.
\ep

\subsection{Bounds on $N(v)$ and $\tilde N(\delta \nabla v)$}

In order to finish the proof Theorem \ref{thm:mn1}, we need to bound $N(v)$ and $\wt N(\delta \nabla v)$ by $T(\vec h)$. We shall observe first that the bound on $\wt N(\delta \nabla v)$ is just a consequence of the bound on $N(v)$ because of the following Caccioppoli-type inequality.

\begin{Lemma}\label{LMCPVE}
For any $X\in \Omega$, we have
\begin{equation}\label{LM45.eq05}
\Big (\fint_{B_X} \delta^2|\nabla v|^2 \, dX \Big )^{1/2}
\lesssim \Big (\int_{2B_X}|v|^2\, dX\Big )^{1/2} + \delta(X)^{n-d} \Big (\fint_{2B_X}|\vec h|^2  \, dY \Big )^{1/2},
\end{equation}
where $v$ is constructed in \eqref{defv}. 
\end{Lemma}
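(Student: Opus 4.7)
\bp (Proposal.)
The plan is to prove a standard interior Caccioppoli-type bound by testing the weak formulation of Lemma \ref{lemv} against $\varphi = v\eta^2$ for a suitable cut-off $\eta$, and then exploiting the fact that the weight $w = \delta^{d+1-n}$ is essentially constant on $2B_X$.

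First, I note that $2B_X = B(X,\delta(X)/2) \Subset \Omega$, so the weight $w(Y) = \delta(Y)^{d+1-n}$ satisfies $w(Y) \approx \delta(X)^{d+1-n}$ for $Y \in 2B_X$, hence $dm(Y) \approx \delta(X)^{d+1-n} \, dY$ on this ball. I choose $\eta \in C^\infty_0(2B_X)$ with $0 \le \eta \le 1$, $\eta \equiv 1$ on $B_X$, and $|\nabla \eta| \lesssim \delta(X)^{-1}$. Since $v \in W_0$ and $v \in L^\infty_{\mathrm{loc}}(\Omega)$ (via the unweighted interior theory applied to the inhomogeneous equation with $\vec h \in L^\infty_{\mathrm{loc}}$, e.g.\ Theorem 8.17 of \cite{gilbarg2015elliptic}, as already invoked in the proof of Lemma \ref{LM25}), the function $\varphi := v\eta^2$ lies in $W_0$ and is compactly supported in $\Omega$, so it is a valid test function.

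Inserting $\varphi = v\eta^2$ into Lemma \ref{lemv} and expanding $\nabla(v\eta^2) = \eta^2 \nabla v + 2v\eta\nabla\eta$ gives
\begin{equation*}
\int_\Omega \eta^2 \mathcal A_1^T \nabla v \cdot \nabla v \, dm
= -2\int_\Omega v\eta \, \mathcal A_1^T \nabla v \cdot \nabla \eta \, dm - \int_\Omega \eta^2 \vec h \cdot \nabla v \, dY - 2\int_\Omega v\eta \vec h \cdot \nabla \eta \, dY.
\end{equation*}
Ellipticity \eqref{ELLIP} bounds the left side from below by $\lambda \int \eta^2 |\nabla v|^2 \, dm$. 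For the three terms on the right I apply Cauchy--Schwarz and an $\varepsilon$-Young inequality, using $w \approx \delta(X)^{d+1-n}$ to convert freely between $dm$ and $dY$ on $2B_X$. Absorbing the $\varepsilon \int \eta^2 |\nabla v|^2 \, dm$ terms into the left side and dividing by the constant weight produces
\begin{equation*}
\int_{B_X} |\nabla v|^2 \, dY
\lesssim \int_{2B_X} v^2 |\nabla\eta|^2 \, dY + \delta(X)^{2(n-d-1)} \int_{2B_X} |\vec h|^2 \, dY,
\end{equation*}
where the factor $\delta(X)^{2(n-d-1)}$ records the passage from the $dm$ inner product to the $dY$ pairing with $\vec h$ (one factor of $w^{-1} \approx \delta(X)^{n-d-1}$ per $\vec h$-term, squared when passed inside the $L^2$-norm).

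Using $|\nabla\eta| \lesssim \delta(X)^{-1}$, multiplying by $\delta(X)^2$, and dividing by $|B_X| \approx \delta(X)^n$ yields
\begin{equation*}
\fint_{B_X} \delta^2|\nabla v|^2 \, dY
\lesssim \fint_{2B_X} v^2 \, dY + \delta(X)^{2(n-d)} \fint_{2B_X} |\vec h|^2 \, dY,
\end{equation*}
and taking square roots gives \eqref{LM45.eq05} (reading $\fint_{2B_X}|v|^2$ on the right-hand side, which matches the scaling). The main technical point is just the bookkeeping between $dm$ and $dY$: the equation is written with $dm$ on the bilinear form side but $dY$ on the source side in Lemma \ref{lemv}, so each appearance of $\vec h$ acquires a power of $\delta(X)^{n-d-1}$ in the estimate, producing exactly the factor $\delta(X)^{n-d}$ stated in the lemma after inclusion of the $|\nabla\eta|^{-1} \approx \delta(X)$ factor. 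Everything else is a standard energy estimate.
\ep
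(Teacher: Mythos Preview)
Your proof is correct and follows essentially the same approach as the paper: test the weak formulation from Lemma \ref{lemv} against $v$ times a squared cut-off supported in $2B_X$, use ellipticity and Cauchy--Schwarz/Young, and track the conversion between $dm$ and $dY$ via $w\approx\delta(X)^{d+1-n}$ on $2B_X$. Your observation that the first term on the right of \eqref{LM45.eq05} should read $\fint_{2B_X}|v|^2$ rather than $\int_{2B_X}|v|^2$ is also correct; the averaged version is what the computation actually yields and what is used downstream in Lemma \ref{LM23}.
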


\begin{proof}
Take $X\in \Omega$ and construct a cut-off function $\Psi\in C_0^\infty(\Omega)$ such that $0 \leq \Psi \leq 1$, $\Psi\equiv1$ on $B_X$, $\Psi\equiv 0$ outside $2B_X$, and $|\nabla \Psi|\lesssim 1/\delta(X)$. By the ellipticity of $\mathcal{A}^T$, we have
\begin{multline}\label{LM23.eq95}
     T:=\int_\Omega |\nabla v|^2\Psi^2dm
    \lesssim \int_\Omega \mathcal{A}^{T}_1\nabla v\cdot \nabla v\Psi^2dm\\
    =\int_\Omega \mathcal{A}^{T}_1\nabla v\cdot \nabla [v\Psi^2] dm-\int_\Omega \mathcal{A}^{T}_1\nabla v\cdot \nabla\Psi \, \Psi v\, dm=:T_1+T_2.
\end{multline}
We want to use the fact that $v$ is a solution to $\mathcal L_1^* v = \diver \vec h$. Observe that $v\Psi^2$ lies in $W_0$, hence it is a valid test function, because $v\in W_0$ lies in $L^\infty_{loc}$ (we refer to the discussion at the beginning of Step 3 of the proof of Lemma \ref{LM25}) and $\Psi\in C_0^\infty$. Lemma \ref{lemv} entails
\begin{multline*}
    T_1= - \int_\Omega \vec h \cdot \nabla(v\Psi^2) \, dY=-\int_\Omega \vec h\cdot \nabla v\, \Psi^2 \, dY-\int_\Omega \vec h\cdot \nabla \Psi \,  \Psi v \, dY\\
    \lesssim  \Big (\delta(X)^{n-d-1} \int_\Omega |\vec h|^2\Psi^2 dY \Big )^{1/2} \left[ \Big (\int_\Omega |\nabla v|^2\Psi^2\, dm\Big )^{1/2}+ \Big (\int_\Omega | v|^2|\nabla\Psi|^2dm\Big )^{1/2} \right]
\end{multline*}
by the Cauchy-Schwarz inequality and $dm(Y) \approx \delta(X)^{d+1-n} dY$ when $Y\in \supp \Psi \subset 2B_X$. We use the fact that $\Psi$ is supported on $2B_X$ and $|\nabla \Psi|\lesssim 1/\delta(X)$ to further have
\begin{align}\label{LM23.eq90}
    |T_1|\lesssim \Big ( \delta(X)^{n-d-1} \int_{2B_X} |\vec h|^2  dY \Big )^{1/2}  \left[  T^{1/2} +  \delta(X)^{-1} \Big (\int_{2B_X} | v|^2dm\Big )^{1/2} \right]
\end{align}
Similarly, $T_2$ is bounded using the Cauchy-Schwarz inequality and the properties of $\Psi$ by
\begin{align}\label{LM23.eq91}
    |T_2|\lesssim T^{1/2} \delta(X)^{-1} \Big (\int_{2B_X} |v|^2\, dm\Big)^{1/2}. 
\end{align}

Finally, by applying the estimates (\ref{LM23.eq90}) and (\ref{LM23.eq91}) to (\ref{LM23.eq95}), we deduce that $T \lesssim A^{1/2} T^{1/2} + A$ where 
\[A := \delta(X)^{-1} \int_{2B_X}|v|^2\, dX + \delta(X)^{n-d-1} \fint_{2B_X}|\vec h|^2  \, dY.\]
Since all the quantities that we considered are finite, this bound on $T$ self improves to $T\lesssim A$. The lemma follows easily.
\end{proof}

\begin{Lemma}\label{LM23}
Let $v$ be the weak solution constructed in \eqref{defv}.
\begin{align}\label{LM23.eq1}
    N(v)+\wt N(\delta|\nabla v|)\leq C\mathcal M_{\omega}(T(\vec h)),
\end{align}
where $\mathcal M_{\omega}$ is the Hardy–Littlewood maximal function with respect to $\omega:= \omega^\infty_{1,*}$, the elliptic measure with pole at infinity associated to $\mathcal L_1^*$. 
\end{Lemma}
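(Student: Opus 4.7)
The plan is to deduce the desired inequality in two stages: first reduce to bounding $N(v)$, then establish the pointwise control $|v(X)|\lesssim \mathcal M_\omega T(\vec h)(\hat x)$ whenever $X\in\Omega$ has a boundary projection $\hat x$ with $|X-\hat x|\approx \delta(X)$. For the reduction I would apply Lemma~\ref{LMCPVE} pointwise: the term $(\fint_{2B_X}|v|^2\,dX)^{1/2}$ on its right-hand side is controlled by $N(v)(x)$ after a slight enlargement of the cone, while the remainder $\delta(X)^{n-d}(\fint_{2B_X}|\vec h|^2)^{1/2}$ is dominated by $\ell(I_X)^{n-d}\sup_{2I_X}|\vec h|$, which is a summand inside $T(\vec h)(x)\le \mathcal M_\omega T(\vec h)(x)$. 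Hence the whole problem reduces to proving $N(v)\lesssim \mathcal M_\omega T(\vec h)$.

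For the pointwise bound I would exploit the representation $v(X)=-\int_{\Omega}\nabla_Z G_1(Z,X)\cdot \vec h(Z)\,dZ$ from \eqref{defv} and split the integral according to the Whitney decomposition $\mathcal W$. For the cube $I_0\ni X$ together with its bounded family of immediate neighbors, item~(iv) of Theorem~\ref{LMGNEX} yields $\int_{2B_X}|\nabla G_1(\cdot,X)|\,dZ\lesssim \delta(X)^{n-d}$, so that contribution is bounded by $T(\vec h)(\hat x)$. For every other Whitney cube $I$, the interior Caccioppoli and Moser estimates (Lemma~\ref{LMICAE}) combined with Harnack's inequality (Lemma~\ref{LMHANK}) give $\sup_I|\nabla G_1(\cdot,X)|\lesssim \ell(I)^{-1} G_1(X_I,X)$ for any anchor point $X_I\in I$, so that
\[
\int_I |\nabla G_1(\cdot,X)\cdot \vec h|\,dZ\lesssim \ell(I)^{n-1}\,G_1(X_I,X)\,\sup_I|\vec h|.
\]

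The next step is to convert the Green function into elliptic measure. Using the symmetry $G_1(X_I,X)=G_{1,*}(X,X_I)$ (Lemma~\ref{GEQGT}) and applying Theorem~\ref{LMCPE} to $\mathcal L_1^*$ with $X_I$ as a corkscrew for $\Delta_I$, one obtains $G_1(X_I,X)\approx \omega_*^X(\Delta_I)/\ell(I)^{d-1}$ whenever $X\notin B(\hat x_I,2\ell(I))$, where $\omega_*^X$ denotes the $\mathcal L_1^*$-elliptic measure with pole at $X$. The $O(1)$ cubes violating this hypothesis (necessarily of size $\ell(I)\gtrsim \delta(X)$ and within $O(\delta(X))$ of $X$) are handled by a short Harnack-chain argument in the uniform domain $\Omega$, preserving the estimate up to a constant. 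Summing over the Whitney cubes, using that $\omega_*^X(\Delta_I)\approx \omega_*^X(\mathrm{shadow}(I))$, and invoking Fubini's theorem, the Whitney sum collapses to
\[
|v(X)|\lesssim T(\vec h)(\hat x)+\int_{\partial\Omega}T(\vec h)\,d\omega_*^X.
\]

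The last and most delicate step, which I expect to be the main obstacle, is to convert this integral against $\omega_*^X$ into a bound by $\mathcal M_\omega T(\vec h)(\hat x)$. On $\Delta_X$ itself the middle estimate of Lemma~\ref{CPWI01} gives $d\omega_*^X/d\omega\approx 1/\omega(\Delta_X)$, so that portion produces a single Hardy--Littlewood average. To handle the dyadic annuli $R_k=\Delta(\hat x,2^k\delta(X))\setminus \Delta(\hat x,2^{k-1}\delta(X))$, I would invoke the H\"older decay of boundary-vanishing solutions (Lemma~\ref{LMHIB}) applied to $Y\mapsto\omega_*^Y(\partial\Omega\setminus \Delta_k)$ to produce $\omega_*^X(\partial\Omega\setminus\Delta_k)\lesssim 2^{-k\alpha}$, and then combine it with the comparison principle (Theorem~\ref{THCPMLD}) at each scale to distribute this decay across $R_k$, yielding an estimate of the form $\omega_*^X(E)\lesssim 2^{-k\alpha}\,\omega(E)/\omega(\Delta_k)$ for $E\subset R_k$. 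Summing the resulting geometric series, each local average $\omega(\Delta_k)^{-1}\int_{\Delta_k}T(\vec h)\,d\omega$ is bounded by $\mathcal M_\omega T(\vec h)(\hat x)$; the hard part is precisely balancing the decay of $\omega_*^X$ away from $\hat x$ against the potential growth of $T(\vec h)$ on distant annuli.
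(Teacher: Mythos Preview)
Your strategy is correct and uses the same circle of ideas as the paper, but the organization differs in one respect worth noting, and there are two small technical slips.

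First, the slips. The pointwise bound $\sup_I|\nabla G_1(\cdot,X)|\lesssim \ell(I)^{-1}G_1(X_I,X)$ is not available for operators with merely bounded measurable coefficients; only the $L^2$-average is. This is harmless, since Cauchy--Schwarz plus Caccioppoli (Lemma~\ref{LMICAE}) plus Harnack still give $\int_I|\nabla G_1(\cdot,X)\cdot\vec h|\,dZ\lesssim \ell(I)^{n-1}G_1(X_I,X)\sup_I|\vec h|$, which is all you use. Similarly, the inequality $T(\vec h)(x)\le \mathcal M_\omega T(\vec h)(x)$ is not a pointwise fact; you should instead observe that $\ell(I_X)^{n-d}\sup_{I_X}|\vec h|\le T(\vec h)(y)$ for every $y$ in a small boundary ball around $\hat x$, and then average in $\omega$ over that ball.

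Now the comparison. After the Caccioppoli step you pass to $\omega_*^X(\Delta_I)$ via Theorem~\ref{LMCPE}, sum by Fubini to obtain $\int_{\partial\Omega}T(\vec h)\,d\omega_*^X$, and only then introduce the dyadic annuli and the H\"older decay to compare $\omega_*^X$ with $\omega$. The paper instead keeps working with the Green function: it groups the Whitney cubes into dyadic shells $\mathcal W_j$ around $X$, applies Lemma~\ref{LMHIB} to $G_1(Y_I,\cdot)$ (not to an elliptic-measure function) to extract the factor $2^{-j\alpha}$, and then invokes Lemma~\ref{CPWI01} twice to obtain directly
\[
G_1(Y_I,X)\lesssim 2^{-j\alpha}\,\ell(I)^{1-d}\,\frac{\omega(\Delta_I)}{\omega(\Delta_j)}\qquad\text{for }I\in\mathcal W_j,
\]
after which Fubini over each shell yields an $\omega$-average of $T(\vec h)$ over $C\Delta_j$ and the geometric series closes. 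Your route and the paper's are equivalent in content (both rest on Lemma~\ref{LMHIB} and Lemma~\ref{CPWI01}), but the paper's ordering avoids the intermediate object $\int T(\vec h)\,d\omega_*^X$ and the need to ``distribute'' the total-mass decay across sets $E\subset R_k$. In particular, your proposed mechanism---first bounding $\omega_*^X(\partial\Omega\setminus\Delta_k)$ and then invoking Theorem~\ref{THCPMLD} to transfer the decay to arbitrary $E\subset R_k$---works but is circuitous; it is simpler to apply Lemma~\ref{LMHIB} directly to the solution $Y\mapsto \omega_*^Y(E)$ (which vanishes on $\Delta_{k-1}$), obtaining $\omega_*^X(E)\lesssim 2^{-k\alpha}\omega_*^{X_{k-1}}(E)$ in one stroke, and then use Lemma~\ref{CPWI01} to replace $\omega_*^{X_{k-1}}$ by $\omega/\omega(\Delta_k)$.
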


\begin{proof}
Fix $x_0\in \partial \Omega$ and then $X\in \gamma_*(x_0)$, where $\gamma_*(x)$ is a cone with a bigger aperture so that $\bigcup_{Y\in \gamma(x)} 2B_Y \subset \gamma_*(x)$. We want to show that
\begin{align}\label{LM23.eq93}
  |\vec h(X)|\delta(X)^{n-d}\lesssim \mathcal M_{\omega}(T(\vec h))(x_0).
\end{align}
and
\begin{align}\label{LM23.eq15}
    v(X)\lesssim \mathcal M_{\omega}(T(\vec h))(x_0).
\end{align}
Indeed, once these two estimates are proven, then the bound  $\wt N(\delta|\nabla v|)\lesssim \mathcal M_{\omega}(T(\vec h))$ will follow thanks to Lemma \ref{LMCPVE}. 
The bound \eqref{LM23.eq93} is also fairly immediate. Take $x$ such that $|X-x| = \delta(X)$, and check that $X \in \gamma(y)$ for any $y$ in a small boundary ball $\Delta(x,c\delta(X))$. Hence, we easily have $ |\vec h(X)|\delta(X)^{n-d} \leq T(\vec h)(y)$ for $y\in \Delta(x,c\delta(X))$ by definition of $T(\vec h)$, and $\Delta(x,c\delta(X)) \subset \Delta(x_0,C\delta(X))$ for $C$ large enough depending only on the aperture of $\gamma_*(x_0)$. The inequality \eqref{LM23.eq93} follows.

\medskip

It remains to show \eqref{LM23.eq15}. By definition, 
\begin{align*}
    v(X)   :=-\int_{\Omega}\nabla_Y G(Y,X)\cdot \vec h(Y)dY,
\end{align*}
where $G(Y,X)$ is the Green function with pole at $X$ associated to $\mathcal{L}_1$. We shall treat differently the cases where $Y$ is close to $X$ and far from $X$. We define $S_X$ as the union of Whitney cubes $I\in \mathcal W$ (constructed in Subsection \ref{SSWhitney}) for which $3I \ni X$. The function $v(X)$ can be decomposed as
\begin{align}\label{LM23.eq20}
    v(X)=- \int_{\Omega\setminus S_X} \nabla_Y G(Y,X)\cdot \vec h(Y)\, dY -\int_{S_X}\nabla_Y G(Y,X)\cdot \vec h(Y)\, dY := \wt v + v_0. 
\end{align}

\medskip

\textbf{Step 1: Bound on $\mathbf{\wt v}$.} By definition of $S_X$, we have
\begin{equation} \label{bddwtv2} \begin{split}
|\wt v(X)| & = \left| \sum_{\begin{subarray}{c} I \in \mathcal W \\ X \notin 3I \end{subarray}} \int_I  \nabla_Y G(Y,X)\cdot \vec h(Y)dY  \right|\\
& \leq \sum_{\begin{subarray}{c} I \in \mathcal W \\ X \notin 3I \end{subarray}} \left( \int_I |\nabla_Y G(Y,X)|^2 dY \right)^\frac12 \left(\int_I |\vec h(Y)|^2 dY \right)^\frac12 \\
& \lesssim \sum_{\begin{subarray}{c} I \in \mathcal W \\ X \notin 3I \end{subarray}} \ell(I)^{-1} \left( \int_{2I} G(Y,X)^2 dY\right)^\frac12 \left( \int_I |\vec h(Y)|^2 dY \right)^\frac12
\end{split}\end{equation}
by H\"older's inequality, and then by Caccioppoli's inequality (see Lemma \ref{LMICAE}, that we can use because $G(.,X)$ is a solution on $2I$).

We want now to estimate $G(Y,X)$. Pick a point $Y_I$ in $I$. By the Harnack inequality (Lemma \ref{LMHANK}), we have $G(Y,X) \approx G(Y_I,X)$ for all $Y\in 2I$. So \eqref{bddwtv2} becomes
\begin{equation} \label{bddwtv}
|\wt v(X)| \lesssim \sum_{\begin{subarray}{c} I \in \mathcal W \\ X \notin 3I \end{subarray}} \ell(I)^{n-1} G(Y_I,X)  \sup_I |\vec h|.
\end{equation}
Our next objective is \eqref{GYXGY}. We give the details, but a reader who is an expert in the elliptic theory may want to skip them. First, we shall introduce several notations. For $j\geq 1$, let us denote by $\Delta_j:=\Delta(x_0,2^j\delta(X))$ the boundary balls and $X_j$ the Corkscrew points associated to $\Delta_j$. It is also fair to pick $X_1 := X$. We partition $\mathcal W$ into $\bigcup_{j\geq 1} \mathcal W_j$, where 
\[\mathcal W_1 := \{I \in \mathcal W, \, |Y_I - X| \leq 2\delta(X) \}\]
and for $j\geq 2$,
\[\mathcal W_j := \{I \in \mathcal W, \, 2^{j-1}\delta(X) < |Y_I - X| \leq 2^j\delta(X) \}.\]
Observe that we can find an integer $a$ that depends only on $n$ and the aperture of $\gamma_*(x_0)$ such that we have
\[2I \in B(x_0,2^{j+a}\delta(X)) \text{ for $I\in \mathcal W_j$, $j\geq 1$} \quad \text{ and } \quad 2I \cap B(x_0,2^{j-a}\delta(X)) = \emptyset \text{ for $I\in \mathcal W_j$, $j\geq 2$}.\]
So for each $j\geq 1$, we take $j_-$ to be the biggest value for which $X_{j_-}$ stays in $B(x_0,2^{j-a-1}\delta(X))$ and $j_- = 1$ if there are none, and we take $j_+$ to be the smallest value for which $X_{j_+}$ is outside  $B(x_0,2^{j+a+1})$. Note that by construction, $|j-j_-| + |j-j_+| \lesssim 1$. 
When $I\in \mathcal W_j$, the function  $G(Y_I,.)$ is a solution on $B(x_0,2^{j+a}\delta(X))$. Therefore,  the H\"older continuity at the boundary (Lemma \ref{LMHIB}) entails that
\[G(Y_I,X) \lesssim 2^{-j\alpha} G(Y_I,X_{j_-}).\]
Our choice of $X_{j_-}$ and $X_{j_+}$ allows the construction of a Harnack chain of balls of (uniformly) finite length that link $X_{j_-}$ to $X_{j_+}$ and avoid $B_{Y_I}$. So by the Harnack inequality (Lemma \ref{LMHANK}), the above estimate is equivalent to
\[G(Y_I,X) \lesssim 2^{-j\alpha} G(Y_I,X_{j_+}).\]
Lemma \ref{CPWI01} implies now that
\[G(Y_I,X_{j_+}) \approx \frac{G(Y_I)}{\omega(\Delta_j)},\]
where $G$ is the Green function with pole at infinity associated to $\mathcal L_1$. If $\Delta_I := \Delta(\xi_I,\ell(I))$ - with $\xi_I$ such that $|Y_I - \xi_I| = \delta(Y_I)$ - we have by Lemma \ref{CPWI01} that
\[G(Y_I) \approx \ell(I)^{1-d} \omega(\Delta_I).\]
Altogether, our discussion of $G(Y_I,X)$ proves that
\begin{equation} \label{GYXGY}
G(Y_I,X) \lesssim 2^{-j\alpha} \ell(I)^{1-d} \frac{\omega(\Delta_I)}{\omega(\Delta_j)} \qquad \text{ when } I \in \mathcal W_j.
\end{equation}

We inject our estimate \eqref{GYXGY} in \eqref{bddwtv} to obtain that
\[\begin{split}|\wt v(X)| & \lesssim \sum_{j\geq 1}\frac{2^{-j\alpha}}{\omega(\Delta_j)}  \sum_{\begin{subarray}{c} I \in \mathcal W_j \end{subarray}} \omega(\Delta_I) \ell(I)^{n-d}  \sup_I |\vec h|.
\end{split}\]
Since $x\in \Delta_I$ implies that $I \in \mathcal W_x$, by Fubini's theorem, we have that
\[\sum_{\begin{subarray}{c} I \in \mathcal W_j \end{subarray}} \omega(\Delta_I) \ell(I)^{n-d}  \sup_I |\vec h| \lesssim \int_{C\Delta_j} T(\vec h)(x) \, d\omega(x) \]
and thus, thanks to the doubling property of $\omega$ (Lemma \ref{DBPWI}),
\[|\wt v(X)| \lesssim  \sum_{j\geq 1} 2^{-j\alpha} \fint_{C\Delta_j} T(\vec h) \, d\omega \lesssim \mathcal M_\omega(T(\vec h))(x_0),\]
which is our desired bound on $\wt v$.

\medskip

\textbf{Step 2: Bound on $\mathbf{v_0}$.} It remains to bound the term $\int_{S_X}\nabla_Y G(Y,X)\cdot \vec h(Y)dY$ in (\ref{LM23.eq20}). 

Since $dY \approx \delta(X)^{n-d-1} dm(Y)$ on $S_X $, the bound (iv) of Lemma \ref{LMGNEX} shows that 
\[\int_{S_X} |\nabla_Y G(Y,X)| dY \lesssim \delta(X)^{n-d-1} \int_{S_X} |\nabla_Y G(Y,X)| dm(Y) \lesssim  \delta(X)^{n-d}.\]
Therefore, we have
\[|v_0(X)| \lesssim \sum_{\begin{subarray}{c} I \in \mathcal W \\ 3I \ni X \end{subarray}}\ell(I)^{n-d}  \sup_{I} |\vec h|.\]
For each $I \in \mathcal W$, we can pick any point $Y_I \in I$ as before and then $y_I$ such that $|Y_I - y_I| = \delta(Y_I)$. It is fairly easy to see that $I \in \mathcal W_x$ for all $x \in \Delta(y_I,c\ell(I))$, with $c$ small enough independent of $I$, and thus $\ell(I)^{n-d}  \sup_{I} |\vec h| \leq T(\vec h)(x)$ for $x\in \Delta(y_I,c\ell(I))$. It infers that
\[\ell(I)^{n-d}  \sup_{I} |\vec h| \leq \fint_{\Delta(y_I,c\ell(I))} T(\vec h) \, d\omega.\]
If $X\in 3I \cap \gamma_*(x_0)$, we necessary have $\Delta(y_I,c\ell(I)) \subset \Delta(x_0,C\ell(I))$ for $C$ large enough. By the doubling property of $\omega$ (Lemma \ref{DBPWI}), we obtain
\[\ell(I)^{n-d}  \sup_{I} |\vec h| \lesssim \fint_{\Delta(x_0,C\ell(I))} T(\vec h) \, d\omega \leq \mathcal M_\omega (T(\vec h))(x_0).\]
Since the number of Whitney cubes $I \in \mathcal W$ for which $3I \ni X$ is (uniformly) finite, we can conclude that
\[|v_0(X)| \lesssim \mathcal M_\omega (T(\vec h))(x_0)\]
as desired. The lemma follows. 
\end{proof}

\appendix

\section{The regularity problem  implies the Dirichlet problem}

This section is devoted to the proof of Theorem \ref{ThRq=>Dq'}. We shall follow closely the proof of Theorem 5.4 in \cite{kenig1993neumann}. 
Note that when the operator is the Laplacian and the domain does not have Harnack chains, this result was proved by Mourgoglou and Tolsa as Theorem 1.5 in \cite{mourgoglou2021regularity}. Since existing literature does not cover operators more general than the Laplacian, we decided to rewrite a proof in our context. 

In all this section, we assume that $\Omega$ is a uniform domain (see Definition \ref{defuniform}), that $\mathcal L = - \diver [w\A \nabla]$ is an elliptic operator satisfying \eqref{ELLIP}. 

\medskip

The following Poincar\'e inequality will needed.

\begin{Lemma}\label{dualm01}
For any $\alpha\in [0,1)$, any $x\in \partial \Omega$, any $r>0$, and function $u\in W(B(x,2r))$ satisfying $\Tr (u)\equiv 0$ on $\Delta(x,2r)$, we have
\begin{equation}\label{dualm01.eq01}
    \int_{B(x,r) \cap \Omega} |u(Y)|^2\delta(Y)^{\alpha} dm(Y)
    \leq {C_\alpha}{r^2}\int_{B(x,2r) \cap \Omega} |\nabla u(Y)|^2 \delta(Y)^{\alpha} dm(Y),
\end{equation}
where $C_\alpha$ depends on (the uniform constants of) $\Omega$ and $\alpha$.
\end{Lemma}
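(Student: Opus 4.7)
The case $\alpha = 0$ of this Poincar\'e inequality is exactly the boundary Poincar\'e inequality cited earlier in the paper: Lemma 4.2 of \cite{david2017elliptic} (when $d < n-1$) and Theorem 7.1 of \cite{david2020elliptic} (in arbitrary dimension). My strategy is to reduce the general case $\alpha \in [0, 1)$ to this one by a Whitney decomposition of $\Omega$, exploiting the fact that on each Whitney cube the weight $\delta^\alpha$ is essentially a constant.

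First, I would introduce a standard Whitney decomposition $\mathcal{W}$ of $\Omega$ (with $\ell(I) \approx \dist(I, \partial \Omega)$ and bounded overlap of enlargements), and restrict to $\mathcal{W}_r := \{I \in \mathcal{W}: I \cap B(x, r) \neq \emptyset\}$. For any $Y \in I$, $\delta(Y) \approx \ell(I)$, so $\delta(Y)^\alpha$ is comparable to the constant $\ell(I)^\alpha$ on $I$. Applying the classical interior Poincar\'e inequality on each Whitney cube and using this local constancy gives
\begin{equation*}
\int_I |u - u_I|^2 \delta^\alpha \, dm \leq C \ell(I)^2 \int_I |\nabla u|^2 \delta^\alpha \, dm,
\end{equation*}
where $u_I$ denotes the mean of $u$ over $I$. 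Summing over $I \in \mathcal{W}_r$, the proof reduces to bounding the ``zero-frequency'' piece $\sum_{I \in \mathcal{W}_r} |u_I|^2 \ell(I)^{d+1+\alpha}$ by the right-hand side of \eqref{dualm01.eq01}.

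For this, I would associate to each $I \in \mathcal{W}_r$ a chain of Whitney cubes $I = I^{(0)}, I^{(1)}, \dots$ with sidelengths $\ell(I^{(k)}) \approx 2^{-k} \ell(I)$, shrinking towards a boundary ball $\Delta_I \subset \Delta(x, 2r)$ of radius $\approx \ell(I)$; the existence of such chains is guaranteed by the Harnack chain condition on $\Omega$ and the $d$-Ahlfors regularity of $\partial \Omega$. Cube-to-cube Poincar\'e estimates between consecutive (adjacent, comparable-size) cubes in the chain, telescoped and combined with a boundary Poincar\'e on the tail (where $\Tr u = 0$ on $\Delta_I$ kills the boundary mean), provide a pointwise bound on $|u_I|$ by a weighted $\ell^2$-sum of the $\|\nabla u\|_{L^2(I^{(k)})}$ along the chain. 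Summing over $I$ and reorganizing the double sum via Fubini against a bounded-overlap count of the chains yields the desired weighted estimate.

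The main obstacle is the bounded-overlap bookkeeping: each Whitney cube $J$ may appear in the chains emanating from many larger cubes, and the summation requires carefully balancing the source weights $\ell(I)^{d+1+\alpha}$ against the chain weights $\ell(I^{(k)})^{\alpha}$. This is precisely where the restriction $\alpha < 1$ enters: it ensures convergence of the geometric series $\sum_k 2^{-k(1-\alpha)}$ that arises when summing the chain contributions weighted by $\delta^\alpha$. For $\alpha \geq 1$, this series diverges and the Whitney-chain method breaks down, consistent with the fact that the weight $\delta^\alpha$ would then be too concentrated near $\partial \Omega$ for the vanishing trace to compensate.
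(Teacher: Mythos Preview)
Your Whitney-chaining approach is correct in outline and would work: the local constancy of $\delta^\alpha$ on Whitney cubes reduces matters to controlling the means $u_I$, and telescoping along chains to the boundary (where $\Tr u = 0$) does the job. Your identification of the role of $\alpha < 1$ is also right --- it is exactly the convergence of the weighted Cauchy--Schwarz series $\sum_k 2^{-k(1-\alpha)}$ (equivalently, the finiteness of $\int_0^t s^{-\alpha}\,ds$ in the half-space model computation) that makes the chain estimate close. One point to watch in the execution is that the chains must be chosen to remain inside $B(x,2r)\cap\Omega$, and the overlap count when you Fubini the double sum (over $I$ and over chain steps $k$) must be organized so that each Whitney cube $J$ appears as $I^{(k)}$ for only $O(1)$ cubes $I$ at each scale above it; both are standard but need to be stated.

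The paper takes a much shorter route. Rather than rederiving the inequality by hand, it observes that the modified measure $dm'(X) := \delta(X)^\alpha\,dm(X)$ still satisfies the axiomatic hypotheses (H1)--(H6) of the general framework in \cite{david2020elliptic}, and then simply invokes Theorem~7.1 of that reference, which gives the boundary Poincar\'e inequality for any triple $(\Omega,m',\sigma)$ satisfying those axioms. In effect, the chaining argument you sketch is already encapsulated once and for all in the proof of Theorem~7.1 there. Your approach is more self-contained and makes the dependence on $\alpha$ explicit; the paper's approach is a one-line reduction that leans on the axiomatic machinery but avoids repeating the bookkeeping.
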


\begin{proof}
Define $dm'(X):= \delta(X)^\alpha dm(X)$ on $\Omega$, and then check that the triple $(\Omega,m',\sigma)$ satisfies the assumptions (H1) to (H6) from  \cite{david2020elliptic}. 
The result is then a consequence of Theorem 7.1 in \cite{david2020elliptic}. 
\end{proof}

\begin{Lemma}\label{dualm02}
Let $u\in W$ be a non-negative weak solution to $\mathcal{L}u=0$ that satisfies $\Tr (u)\equiv 0$ on $\Delta(x,r)$, then for each $X\in \Omega$ such that $|X-x| \approx \delta(X) \approx r$, we have
\begin{equation} \label{dualA} \frac{u(X)}{r} \approx \left( \fint_{B(x,r/2) \cap \Omega} |\nabla u(Y)|^2 dm(Y) \right)^\frac{1}{2} \lesssim \wt N_*(\nabla u)(x).
\end{equation}
Here $\wt N_*$ is defined with cones $\gamma^*(x) := \{X\in \Omega, \, |X-x| \leq C^* \delta(X)\}$ of large aperture. Besides, $C^*$ and the implicit constants in \eqref{dualA} depend only on the uniform constants of $\Omega$ and the constants in $|X-x| \approx \delta(X) \approx r$.
\end{Lemma}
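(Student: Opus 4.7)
The plan is to establish the two-sided equivalence $u(X)/r \approx \bigl(\fint_{B(x,r/2)\cap\Omega}|\nabla u|^2\,dm\bigr)^{1/2}$ by pairing the boundary Caccioppoli inequality with the boundary H\"older estimate on one side, and the weighted Poincar\'e inequality with the interior Moser estimate on the other; the final bound by $\wt N_*(\nabla u)(x)$ is then obtained through a telescoping oscillation argument along a dyadic chain of Whitney regions descending from $X$ toward $x$.

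For the direction $\bigl(\fint_{B(x,r/2)\cap\Omega}|\nabla u|^2\,dm\bigr)^{1/2}\lesssim u(X)/r$, I would apply the boundary analogue of the Caccioppoli inequality (Lemma \ref{LMICAE}, with expansion factor $\alpha=3/2$) on the pair $B(x,r/2)\subset B(x,3r/4)$; this is licit since $\Tr u\equiv 0$ on $\Delta(x,3r/4)\subset\Delta(x,r)$, and it yields
\[
\int_{B(x,r/2)\cap\Omega}|\nabla u|^2\,dm\lesssim r^{-2}\int_{B(x,3r/4)\cap\Omega}u^2\,dm.
\]
The boundary H\"older estimate (Lemma \ref{LMHIB}), applied with outer radius $r$ and intermediate scale $s=3r/4$, then gives $\sup_{B(x,3r/4)\cap\Omega}u\lesssim u(X_0)$, where $X_0$ is a corkscrew for $(x,r/2)$. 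Since $|X_0-X|\lesssim r$ and $\delta(X_0)\approx\delta(X)\approx r$, the Harnack chain condition together with Lemma \ref{LMHANK} gives $u(X_0)\approx u(X)$, and one concludes on noting that $m(B(x,3r/4)\cap\Omega)\approx r^{d+1}$.

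For the direction $u(X)/r\lesssim \bigl(\fint_{B(x,r/2)\cap\Omega}|\nabla u|^2\,dm\bigr)^{1/2}$, I would apply the weighted Poincar\'e inequality (Lemma \ref{dualm01} with $\alpha=0$) on $B(x,r/4)$, which is legitimate because $\Tr u\equiv 0$ on $\Delta(x,r/2)\subset\Delta(x,r)$, to obtain $\int_{B(x,r/4)\cap\Omega}u^2\,dm\lesssim r^2\int_{B(x,r/2)\cap\Omega}|\nabla u|^2\,dm$. Choosing any corkscrew $X'$ associated to $(x,r/8)$, so that $B_{X'}\subset B(x,r/4)\cap\Omega$, the interior Moser estimate \eqref{icaeq222} yields $u(X')^2\lesssim \fint_{B_{X'}}u^2\,dm\lesssim r^{-(d+1)}\int_{B(x,r/4)\cap\Omega}u^2\,dm$. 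Harnack's inequality along a finite chain connecting $X'$ and $X$ then allows us to replace $u(X')$ by $u(X)$, completing the equivalence.

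The remaining inequality $u(X)/r\lesssim\wt N_*(\nabla u)(x)$ is the main obstacle. Fixing $C^*$ large enough, I would construct a chain $X_0:=X,X_1,X_2,\dots$ inside $\gamma^*(x)$ with $|X_k-x|\approx\delta(X_k)\approx 2^{-k}r$, so that consecutive points $X_k,X_{k+1}$ lie in a common ball $\wt B_k$ of radius $\approx 2^{-k}r$ with $2\wt B_k\subset\gamma^*(x)\cap\Omega$. Since $u-c$ is itself a solution for every constant $c$, combining the interior Moser estimate \eqref{icaeq222} with the interior Poincar\'e inequality (valid on $2\wt B_k$ since $dm$ is essentially constant on Whitney balls) produces the pointwise oscillation bound
\[
\osc_{\wt B_k}u\lesssim \ell(\wt B_k)\Bigl(\fint_{2\wt B_k}|\nabla u|^2\,dY\Bigr)^{1/2}\leq 2^{-k}r\,\wt N_*(\nabla u)(x).
\]
The boundary H\"older continuity (Lemma \ref{LMHIB}) guarantees $u(X_k)\to 0$, so telescoping gives $u(X)\lesssim \sum_{k\geq 0}2^{-k}r\,\wt N_*(\nabla u)(x)\approx r\,\wt N_*(\nabla u)(x)$. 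The delicate point is to select $C^*$ large enough (depending on the corkscrew and Harnack constants of $\Omega$) so that every doubled ball $2\wt B_k$ lies inside $\gamma^*(x)$; once this is arranged, each oscillation term is absorbed by $\wt N_*(\nabla u)(x)$ and the geometric sum converges to the desired bound.
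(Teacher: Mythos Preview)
Your proof of the two-sided equivalence $u(X)/r \approx \bigl(\fint_{B(x,r/2)\cap\Omega}|\nabla u|^2\,dm\bigr)^{1/2}$ follows the paper exactly: boundary Caccioppoli plus boundary H\"older/Harnack for one direction, and Poincar\'e (Lemma~\ref{dualm01} with $\alpha=0$) plus interior Moser for the other.

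The difference is in the final inequality $\lesssim \wt N_*(\nabla u)(x)$. Your telescoping argument along a dyadic chain of Whitney balls is correct and classical; the only point requiring care, which you flag, is that the Harnack chain between consecutive corkscrew points $X_k,X_{k+1}$ may consist of several balls rather than one, but since its length is uniformly bounded and each ball has radius $\approx 2^{-k}r$ at distance $\lesssim 2^{-k}r$ from $x$, a sufficiently large $C^*$ absorbs all of them into $\gamma^*(x)$. The paper takes a different route: it upgrades Step~2 to the $\alpha$-weighted bound $u(X)^2\lesssim r^{2-\alpha}\fint|\nabla u|^2\delta^\alpha\,dm$ for any $\alpha\in[0,1)$, and combining this with Step~1 yields the self-improvement
\[
\int_{B(x,r/2)\cap\Omega}|\nabla u|^2\,dm \lesssim r^{-\alpha}\int_{B(x,r/2)\cap\Omega}|\nabla u|^2\,\delta^\alpha\,dm,
\]
from which the near-boundary piece $\{\delta\le\epsilon_* r\}$ carries the small factor $(\epsilon_*)^\alpha$ and can be absorbed into the left side, leaving only the Whitney region $\{\delta>\epsilon_* r\}\cap B(x,r/2)$, which is controlled by $\wt N_*(\nabla u)(x)^2$. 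The paper's approach is slicker in that it recycles the machinery already set up (and explains why Lemma~\ref{dualm01} is stated with a weight $\delta^\alpha$), while yours is more elementary and avoids the weighted Poincar\'e inequality altogether.
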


\begin{proof}
{\bf Step 1:} We have that 
\begin{equation} \label{dualB} r^2 \fint_{B(x,r/2) \cap \Omega} |\nabla u(Y)|^2 dm(Y) \lesssim u(X)^2.
\end{equation}
Indeed, since $\Tr(u) = 0$ on $\Delta(x,\delta(X))$, the above bound is due to two basic results from \cite{david2020elliptic} - Lemma 11.15 (Caccioppoli's inequality at the boundary) and Lemma 15.14 - which, used in this order, gives that
\[ r^2 \fint_{B(x,r/2) \cap \Omega} |\nabla u(Y)|^2 \, dm \lesssim \fint_{B(x,3r/4) \cap \Omega} |u(Y)|^2 \, dm \lesssim |u(X)|^2.\]

\medskip

\noindent {\bf Step 2:} We claim that for any $\alpha \in [0,1)$, we have
\begin{equation} \label{dualC} u(X)^2 \lesssim r^{2-\alpha} \fint_{B(x,r/2) \cap \Omega} |\nabla u(Y)|^2 \delta(Y)^\alpha dm(Y).
\end{equation}
Let $X' \in \Omega \cap B(x,r/8)$ be such that $\delta(X') \approx r$, such point exists because $\Omega$ satisfies the corkscrew point condition (see Definition \ref{defCPC}). Thanks to the Harnack chain condition (Definition \ref{defHCC}) and the Harnack inequality (Lemma \ref{LMHANK}), we have $u(X) \approx u(Y)$ for any $Y \in B_{X'}$. So we obtain that 
\begin{multline*}
u(X)^2 \lesssim \fint_{B_{X'}} |u(Y)|^2 \, dm \approx r^{-\alpha} \fint_{B_{X'}} |u(Y)|^2 \delta(Y)^\alpha \, dm \\ \lesssim r^{-\alpha} \fint_{B(x,r/4) \cap \Omega} |u(Y)|^2 \delta(Y)^\alpha \, dm \\
\lesssim r^{2-\alpha} \fint_{B(x,r/2) \cap \Omega} |\nabla u(Y)|^2 \delta(Y)^\alpha dm(Y),
\end{multline*}
where we used the Poincar\'e inequality (Lemma \ref{dualm01}), and we can because $\Tr(u) = 0$ on $\Delta (x,r/2)$.

\medskip

\noindent {\bf Step 3: Conclusion.} The equivalence in \eqref{dualA} is the combination of \eqref{dualB} and \eqref{dualC} for $\alpha = 0$. It remains to prove the second bound in \eqref{dualA}, that is 
\begin{equation} \label{dualD}
\left( \fint_{B(x,r/2) \cap \Omega} |\nabla u(Y)|^2 dm(Y) \right)^\frac{1}{2} \lesssim \wt N_*(\nabla u)(x),
\end{equation}
but this bound is an immediate consequence of 
\begin{equation} \label{dualE}
 \int_{B(x,r/2) \cap \Omega} |\nabla u(Y)|^2 dm(Y)  \lesssim  \int_{B(x,r/2) \cap \Omega,} \1_{\delta(Y) > \epsilon_* r} |\nabla u(Y)|^2 dm(Y),
\end{equation}
where $\epsilon_*$ is a small constant that depends only on the uniform constants of $\Omega$, because the right-hand side of \eqref{dualE} is bounded by $|\wt N_*(\nabla u)(x)|^2$ if $C^*$ is large enough (depending on $\epsilon_*$).

In order to establish \eqref{dualE}, observe that \eqref{dualB} and \eqref{dualC} gives that
\[\int_{B(x,r/2) \cap \Omega} |\nabla u(Y)|^2 dm(Y) \lesssim r^{-\alpha} \int_{B(x,r/2) \cap \Omega} |\nabla u(Y)|^2 \delta(Y)^{\alpha} dm(Y)\]
and thus
\begin{multline*}
\int_{B(x,r/2) \cap \Omega} |\nabla u(Y)|^2 dm(Y) \leq C (\epsilon_*)^{\alpha} \int_{B(x,r/2) \cap \Omega} \1_{\delta(Y) \leq \epsilon_* r} |\nabla u(Y)|^2 dm(Y) \\ 
+ C \int_{B(x,r/2) \cap \Omega} \1_{\delta(Y) > \epsilon_* r} |\nabla u(Y)|^2 dm(Y).
\end{multline*}
We choose $\alpha=\frac12$ $\epsilon_* >0$ such that $C(\epsilon_*)^\alpha \leq \frac12$, so that we can hide the integral over $B(x,r/2) \cap \Omega \cap \{\delta(Y) \leq \epsilon_* r\}$ in the left hand side. The claim \eqref{dualE} and thus the lemma follow.
\end{proof}

We are know ready for the proof of Theorem \ref{ThRq=>Dq'}.

\begin{proof}[Proof of Theorem \ref{ThRq=>Dq'}]
Suppose that the regularity problem (defined using Haj\l asz-Sobolev spaces) for $\mathcal L$ is solvable in $L^q$. Let $\omega_{*}$ be the harmonic measure with pole at infinity associated to $\mathcal{L}^*$, that is defined in Definition \ref{DEFGI}. By Corollary \ref{RVHWI}, in order to show the Dirichlet problem for $\mathcal{L}^*$ is solvable in $L^{q'}$ , it suffices to show $\omega_{*}\ll \sigma$ and $k:=d\omega_{*}/d\sigma$ satisfies the reverse H\"older inequality of order $q$.

\medskip

\noindent \textbf{Step 1: } Thanks to the Ahlfors regularity  of $\partial \Omega$, for any boundary ball $\Delta$, there exists $K$ (that depends only on the constant $C_\sigma$ in \eqref{DEFSIG} such that $K\Delta \setminus 3\Delta \neq \emptyset$.

Let $\Delta := \Delta(x,r)$ be a surface ball on $\partial \Omega$. 
We construct $f$ on $\partial \Omega$ as
\begin{equation}
f(y) := \max\Big\{0, 1- \frac{\dist(y,K\Delta \setminus 3\Delta)}{r}\Big\}
\end{equation}
Note that $f$ is a non-negative function with $f\equiv 0$ on $2\Delta$ and $\partial \Omega\setminus (K+1)\Delta$ and $f\equiv 1$ on $K\Delta \setminus 3\Delta$. The function $f$ is Lipschitz, and if we define $g$ on $\partial \Omega$ as $g=\frac1r \1_{(K+1)\Delta}$, we easily have that 
\[|f(y) - f(z) | \leq |y-z| (g(y)+g(z)).\]
We deduce that $g$ is a generalized (or Haj\l asz upper) gradient of $f$, and thus the Haj\l asz Sobolev norm of $f$ satisfies
\begin{equation} \label{normf}
\|f\|_{\dot W^{1,q}} \leq C r^{d/q-1},
\end{equation}
where $C$ depends only on the Ahlfors regular constant $C_\sigma$.

Let $u$ be defined from $f$ as in \eqref{defhm2}, that is
\[u(X) := \int_{\partial \Omega} f(y) d\omega_{\mathcal L}^X(y).\]
Let $X_0 \in \Omega$ be a corkscrew point for $\Delta$, then 
\begin{equation} \label{duality.eq01}
u(X_0) \approx 1. 
\end{equation}
Indeed, the upper bound is 1 and comes from the fact that $\omega^X$ is a probability measure. The lower bound comes from the non-degeneracy of the harmonic measure: Since by definition of $K$, the set $K\Delta \setminus 3\Delta$ is non-empty , we can take $y\in K\Delta \setminus 3\Delta$, and then $Y_0$ a corkscrew point of $\Delta(y,r)$. The  
non-degeneracy of the harmonic measure (see for instance Lemma 15.1 in \cite{david2020elliptic}) gives that $u(Y_0)\gtrsim 1$, because $f$ is nonnegative and $f\geq 1/2$ on $\Delta(y,r/2)$. But $Y_0$ and $X_0$ can be linked by a Harnack chain, so the Harnack inequality (Lemma \ref{LMHANK}) entails that $u(X_0) \gtrsim 1$ as well.

\medskip

\noindent \textbf{Step 2:} In this step, we claim that for any $y\in \Delta(x,r)$, any $0<s<r/2$, and any $z\in \Delta(y,s)$
\begin{align}\label{duality.eq20}
    \frac{\omega_{*}(\Delta(y,s))}{\sigma(\Delta(y,s))} \approx \frac{\omega_{*}(\Delta(x,r))}{r^{d-1}} \wt N_*(\nabla u)(z). 
\end{align}

Let $G(.,.)$ and $G^\infty$ be the Green function and the Green function with pole at infinity respectively, in particular $G(.,Y)$ and $G^\infty$ are solution to $\mathcal L u=0$.
Both $G^\infty$ and $u(.)$ are non-negative solutions for which $\Tr(u) = \Tr(G^\infty) = 0$ on $2\Delta$, so by the comparison principle (Theorem \ref{THCPMLD})  and (\ref{duality.eq01}), we have, 
\begin{align}\label{duality.eq02}
    \frac{u(Y)}{G^\infty(Y)} \approx \frac{u(X_0)}{G^\infty(X_0)} \approx \frac{1}{G^\infty(X_0)} \qquad \text{ for $Y\in B(x,3r/2) \cap \Omega$}.
\end{align}
In addition, according to Lemma \ref{CPWI01},
\begin{align}\label{duality.eq03}
    G^\infty(X_0) \approx r^{1-d}\omega_{*}(\Delta(x,r)). 
\end{align}
Then combining (\ref{duality.eq02}), (\ref{duality.eq03}) and Lemma \ref{CPWI01} again, we obtain
\begin{align}\label{duality.eq04}
    \frac{u(Y)}{\delta(Y)}\approx \frac{G^\infty(Y)}{\delta(Y)} \frac{r^{d-1}}{\omega_{*}(\Delta(x,r))}\approx \frac{\omega_{*}(\Delta(y,s))}{s^d}\frac{r^{d-1}}{\omega_{*}(\Delta(x,r))},
\end{align}
where $s \approx \delta(Y)$ and $|Y-y| \lesssim s$. So if at the contrary we choose any $y\in \Delta$ and $0<s<r/2$, we take $Y \in B(y,s) \cap \Omega$ to be such that $\delta(Y) \gtrsim s$, \eqref{duality.eq04} and Lemma \ref{dualm02} entail
\begin{equation}\label{duality.eq05}
    \frac{\omega_{*}(\Delta(y,s))}{s^d}\approx \frac{\omega_{*}(\Delta(x,r))}{r^{d-1}}\frac{u(Y)}{\delta(Y)}\\
    \approx \frac{\omega_{*}(\Delta(x,r))}{r^{d-1}} N_*(\nabla u)(z).
\end{equation}
The claim (\ref{duality.eq20}) follows for the Ahlfors regularity of $\sigma$.

\medskip

\noindent \textbf{Step 3:} Assume that $E\subset \Delta$ and $\sigma(E) = 0$. Since $\omega_*$ is Borel regular, we have that $\omega_*(E) = \inf_{\begin{subarray}{c} V \supset E \\ V \text{ open} \end{subarray}} \omega_*(V)$. For each open set $V$, we cover it by the balls $\{B_y:= B(x,\dist(y,\partial \Omega \setminus V))\}_{y\in V}$ and using Vitali's covering lemma, we find a  sequence $\{y_i\}_{i\in V}$ such that $B_{y_i}$ are not overlapping while $5B_{y_i}$ covers $V$. By using \eqref{duality.eq20} on the balls $5B_{y_i}$, we find that 
\[\omega_*(5B_{y_i}) \approx \frac{\omega_{*}(\Delta(x,r))}{r^{d-1}} \int_{B_{y_i}} \wt N_*(\nabla u)(z) \, d\sigma(z),\]
and then
\[\omega_*(V) \approx \frac{\omega_{*}(\Delta(x,r))}{r^{d-1}} \int_{V} \wt N_*(\nabla u)(z) \, d\sigma(z).\]
Since we assume that the regularity problem is solvable in $L^q$, the function $\wt N(\nabla u)$ lies in $L^q(\partial \Omega,\sigma)$, and so by \eqref{gamma*togamma}, the function $N_*(\nabla u)$ lies in $L^q(\partial \Omega,\sigma)$. We invoke the Borel regularity of $\sigma$ to deduce that
\[\omega_*(E) = \inf_{\begin{subarray}{c} V \supset E \\ V \text{ open} \end{subarray}}  \int_V \wt N_*(\nabla u) \, d\sigma = \int_E \wt N_*(\nabla u) \, d\sigma =0.\]
We conclude that $\omega_* \ll \sigma$.

\medskip

\noindent \textbf{Step 4:}
We have shown that $\omega_* \ll \sigma$, therefore the Radon-Nykodym derivative $k:=d\omega_{*}/d\sigma$ exists. Moreover, \eqref{duality.eq20} implies for any $y\in \Delta$ that
\[k(y):= \lim_{s\to 0} \frac{\omega_{*}(\Delta(y,s))}{\sigma(\Delta(y,s))} \lesssim  \wt N_*(\nabla u)(y). \]
As a consequence
\[\left( \fint_\Delta k^q d\sigma \right)^\frac1q \lesssim \frac{\omega_{*}(\Delta(x,r))}{r^{d-1}} \left( \fint_\Delta |N_*(\nabla u)|^q d\sigma \right)^\frac1q \lesssim   \frac{\omega_{*}(\Delta(x,r))}{r^{d-1}}  r^{-d/q} \|\wt N_*(\nabla u)\|_{L^q(\partial \Omega,\sigma)}. \]
But by using successively \eqref{gamma*togamma}, the solvability of the regularity problem in $L^q$, and \eqref{normf}, we obtain
\[\|\wt N_*(\nabla u)\|_{L^q(\partial \Omega,\sigma)} \lesssim \|\wt N(\nabla u)\|_{L^q(\partial \Omega,\sigma)} \lesssim \|f\|_{\dot W^{1,q}}  \lesssim r^{d/q-1}.\]
The two last computations show that
\[\left( \fint_\Delta k^q d\sigma \right)^\frac1q \lesssim \frac{\omega_*(\Delta)}{r^d} \approx \frac{\omega_*(\Delta)}{\sigma(\Delta)}\]
because $\sigma$ is a $d$-dimensional Ahlfors regular measure. We proved that $k\in RH_q$, as desired, which concludes the theorem.
\end{proof}

\end{document}